\documentclass[a4paper, oneside, 11pt]{article}
\usepackage[english]{babel}
\usepackage{latexsym,amsfonts,amsmath,enumerate,graphics,enumerate,amsthm}
\title{\textbf{Order continuity from a topological perspective} }
\author{T.\ Hauser, A.\ Kalauch}

\let\epsilon=\varepsilon

\theoremstyle{definition}
\newtheorem{definition}{Definition}[section]
\newtheorem{theorem}[definition]{Theorem}
\newtheorem{proposition}[definition]{Proposition}
\newtheorem{lemma}[definition]{Lemma}
\newtheorem{corollary}[definition]{Corollary}
\theoremstyle{remark}
\newtheorem{remark}[definition]{Remark}
\newtheorem{example}[definition]{Example}

\let\epsilon=\varepsilon
\let\phi=\varphi
\let\theta=\vartheta
\newcommand{\R}{\mathbb{R}}
\newcommand{\N}{\mathbb{N}}


\begin{document}
\maketitle

\begin{abstract}
We study three types of order convergence and related concepts of order continuous maps in partially ordered sets, partially ordered abelian groups and partially ordered vector spaces, respectively. 
An order topology is introduced such that  
in the latter two settings under mild conditions order continuity is a topological property. 
We present a generalisation of the Ogasawara theorem on the structure of the set of order continuous operators.
\end{abstract}
 
\section{Introduction}

In this paper we deal with three types of order convergence, introduce an appropriate topology and relate these concepts. Moreover, we study the according four types of order continuity of maps and obtain properties of the corresponding sets of order continuous maps.
We investigate these concepts 
in partially ordered sets, in partially ordered abelian groups as well as in partially ordered vector spaces, where we intend to give the results as general as possible. 

The first concept of order convergence which we will deal with ($o_1$-convergence) is motivated by the usual order convergence in vector lattices, see e.g.\  \cite[Chapter 1, Section 4]{Positiveoperators_old} or 
\cite[Definition 1.1]{Abra}.
 For bounded nets, a definition of $o_1$-convergence can also be found in \cite[Chapter 1; Definition 5.1.]{Peressini67}.
In partially ordered vector spaces, $o_1$-convergence is considered  e.g.\ in 
\cite{IaB} and  \cite[Definition 1.7.]{Imh}.
 
The second and the third concept of order convergence ($o_2$-convergence, $o_3$-convergence) are given in vector lattices in \cite[p.\ 288]{Abra} and  
\cite[Definition 1.2]{Abra}, respectively.  
After introducing these concepts in partially ordered sets, we will show that $o_3$-convergence coincides with the convergence given in \cite[Definition 1]{Wolk} in partially ordered sets, and with the convergence introduced in 
\cite[Definition II.6.3.]{Vulikh67} in lattices. Our definition is inspired by
\cite[Definition 1.8.]{Imh}, where the concept is considered in
partially ordered vector spaces.

Operators in vector lattices that are continuous with respect to $o_1$-convergence
are frequently studied, see e.g.\ \cite[Definition 4.1]{Positiveoperators_old}, 
\cite[Definition 1.3.8]{Meyer91}. 
Operators on vector lattices that preserve $o_2$-convergence or $o_3$-convergence are considered in
\cite{Abra}.
Our aim is to introduce a concept of topology in partially ordered sets such that $o_1$-, $o_2$- and $o_3$-continuity, respectively, coincide with the topological continuity under mild conditions. 
Therefore we introduce an order topology $\tau_o$, which generalises the concept of order topology in partially ordered vector spaces given in \cite{Imh}. Note that 
$\tau_o$ is a special case of a $\sigma$-compatible topology on partially ordered sets considered in \cite{Floyd1955}.
We will show that $\tau_o$ coincides with the topology defined in lattices in \cite[Definition II.7.1]{Vulikh67} as well as  in \cite{Dobbertin84}. 

Note that another concept of topology, the so-called order bound topology, is introduced in partially ordered vector spaces in \cite[p.\ 20]{Namioka57}, see also \cite[Def.\ 2.66]{CAD}. In \cite[Theorem 5.2]{Namioka57} it is shown that each regular operator between partially ordered vector spaces is continuous with respect to the order bound topology. As there clearly exist examples of regular operators that are not $o_1$-continuous, the concept of order bound topology is not suitable for our purpose. 

The results in this paper are organised as follows. 
In Section 2 we introduce and characterise net catching sets and define $\tau_o$ in partially ordered sets. 
The three concepts of order convergence are defined in Section 3 in partially ordered sets. We link the concepts to the ones in the literature, show that the three concepts differ, investigate their relations and show that they imply $\tau_o$-convergence.
We prove that closedness with respect to $\tau_o$ is characterised by means of order convergence. Further properties of order convergence concepts such as monotonicity and a Sandwich theorem will be established.

In Section 4 we investigate maps that are continuous with respect to the order convergences and $\tau_o$-convergence, respectively, and relate these concepts. We show that  $o_3$-convergence in a lattice can be characterised by $o_2$-convergence in a Dedekind complete cover. 

In Section 5 we characterise the concepts of order convergence and net catching sets in partially ordered abelian groups.
Section 6 contains the Riesz-Kantorivich theorem in the setting of partially ordered abelian groups.

In Section 7, we give sufficient conditions on the domain and the codomain of an order bounded map between partially ordered abelian groups that guarantee the equivalence of
the four concepts of continuity. Under the same conditions, we show a generalisation of  
Ogasawara's theorem that can be found in \cite[Theorem 4.4]{Positiveoperators_old}, i.e.\ we prove that the set of all order bounded additive continuous maps is an order closed ideal in the lattice-ordered abelian group of all order bounded additive maps.  

In Section 8, we show that the scalar multiplication in partially ordered vector spaces is linked appropriately to the $o_i$-convergences if and only if the space is Archimedean and directed. Examples are given which show that the order convergences differ in this setting.
In Section 9 we show that the results of Section 7 are also valid for linear operators on partially ordered vector spaces.

Next we fix our notation.
As usual, on a non-empty set $P$ a binary relation $\leq$ is called a \emph{partial order} if it is reflexive, transitive and anti-symmetric. The set $P$ is then  called a \emph{partially ordered set}. 
For $x,y\in P$ we write $x<y$ if $x\leq y$ and $x\neq y$.
For $U,V\subseteq P$ we denote $U\leq V$ if for every $u\in U$ and $v\in V$ we have $u\leq v$. If $V=\{v\}$ for $v\in P$, we abbreviate $U\leq \{v\}$ by $U\leq v$ (and similarly $v\leq U$).
For $x \in P$ and $M \subseteq P$ define $M_{\geq x}:=\{m \in M;\, m \geq x\}$ and $M_{\leq x}:=\{m \in M;\, m \leq x\}$.
A set $M\subseteq P$ is called \emph{majorising} in $P$ if for every $x\in P$ the set $M_{\geq x}$ is non-empty.

For $x,y\in P$ the \emph{order interval} is given by $[x,y]:=\{z\in P; \, x\leq z\leq y\}$. 
$P$ is called \emph{directed (upward)} if for every $x,y\in P$ the set $P_{\geq x}\cap P_{\geq y}$ is non-empty. \emph{Directed downward} is defined analogously. A set $M\subseteq P$ is called \emph{full} if for every $x,y\in M $ one has $[x,y]\subseteq M$.
For a subset of $P$,  the notions \emph{bounded above}, \emph{bounded below},
\emph{order bounded},
  \emph{upper (or lower) bound} and  \emph{infimum (or supremum)} 
are defined as usual. For a net $(x_\alpha)_{\alpha \in A}$ in $P$ we denote $x_\alpha \downarrow$ if $x_\alpha \leq x_\beta$ whenever $\alpha\geq \beta$. For $x\in P$ we write $x_\alpha \downarrow x$ if $x_\alpha \downarrow$ and $\inf \{x_\alpha; \, \alpha \in A\}=x$. Similarly we define $x_\alpha \uparrow$ and $x_\alpha \uparrow x$.

$P$ is said to have the \emph{Riesz interpolation property} if for every non-empty 
finite sets $U,V\subseteq P$ with $U\leq V$ there is $x\in P$ such that $U\leq x\leq V$.
We call $P$ a \emph{lattice} if for every non-empty finite subset of $P$ the infimum and the supremum exist in $P$. A lattice $P$ is called \emph{Dedekind complete} if  
every non-empty set which is bounded above has a supremum, and every non-empty set which is bounded below has an infimum. 
We say that a lattice $P$ satisfies the \emph{infinite distributive laws} if 
for every $x\in P$ and $M\subseteq P$ the following equations hold
\begin{eqnarray}
x\wedge \left(\bigvee M\right)&=&\bigvee (x\wedge M),\nonumber\\
x\vee \left(\bigwedge M\right)&=&\bigwedge (x\vee M)\nonumber
\end{eqnarray}
(where in the first equation it is meant that if the supremum of the left-hand side of the equation exists, then also the one on the right-hand side, and both are equal).
If $P$ is a lattice which  satisfies the infinite distributive laws, then for $M,N\subseteq P$ the  formulas	
	\begin{eqnarray}\label{equ:distr_law}\left(\bigvee M\right)\wedge \left(\bigvee N\right)&=&
	\bigvee(M\wedge N)\\
\left(\bigwedge M\right)\vee \left(\bigwedge N\right)&=&
\bigwedge(M\vee N)\nonumber
	\end{eqnarray}
	are satisfied, see \cite[Chapter II.4]{Vulikh67}. 

The following statement is straightforward.
\begin{lemma} \label{lem:majosetsandsuppe}
	Let $P$ be a partially ordered set and $A \subseteq B \subseteq P$ such that $A$ is majorising in $B$. If the supremum of $B$ exists, then the supremum of $A$ exists and satisfies $\sup A = \sup B$.
\end{lemma}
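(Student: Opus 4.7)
The plan is to set $s := \sup B$ and verify directly that $s$ satisfies the two defining properties of $\sup A$, namely that it is an upper bound for $A$ and that it is the least such.

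First I would observe that since $A \subseteq B$ and $s$ is an upper bound of $B$, the element $s$ is automatically an upper bound of $A$. This is the trivial half.

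For the less-trivial half, I would let $t \in P$ be an arbitrary upper bound of $A$ and aim to show $s \leq t$, which by definition of $s = \sup B$ reduces to showing that $t$ is an upper bound of $B$. Here is where the majorising hypothesis enters: given any $x \in B$, the set $B_{\geq x} \cap A = A_{\geq x}$ (viewed inside $B$) is non-empty, so we can pick some $a \in A$ with $x \leq a$. Since $t$ is an upper bound of $A$ we have $a \leq t$, and transitivity of $\leq$ yields $x \leq t$. As $x \in B$ was arbitrary, $t$ is an upper bound of $B$, hence $s \leq t$. Combining both halves gives that $s$ is the least upper bound of $A$, so $\sup A$ exists and equals $\sup B$.

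There is no real obstacle here; the argument is a direct unfolding of the definitions of supremum and of the majorising property, and it does not require $P$ to be a lattice or to have any further order-theoretic structure beyond what is stated.
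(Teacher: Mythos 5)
Your argument is correct and is precisely the direct unfolding of the definitions that the paper has in mind when it declares the lemma ``straightforward'' and omits the proof: $\sup B$ is an upper bound of $A$ by inclusion, and any upper bound of $A$ majorises all of $B$ via the majorising hypothesis, hence dominates $\sup B$. Nothing is missing.
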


We call $M\subseteq P$
\emph{order dense} in $P$ if for every $x\in P$ one has \[\sup M_{\leq x}=x=\inf M_{\geq x}.\] 
Clearly, every order dense subset of $P$ is majorising.
The next statement is shown for partially ordered vector spaces in \cite[Stelling 1.2.7]{Waaij}, for sake of completeness we give a shorter proof here.
\begin{proposition} \label{pro:orderdensitytransitive}
	Let $M \subseteq N\subseteq P$. If $M$ is order dense in $N$ and  $N$ is order dense in $P$, then $M$ is order dense in $P$.
\end{proposition}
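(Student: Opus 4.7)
The plan is to fix $x \in P$ and prove $\sup_P M_{\leq x} = x$; the companion equality $\inf_P M_{\geq x} = x$ will follow by the dual argument, so I will only work out one direction. Since $x$ is obviously an upper bound of $M_{\leq x}$, the whole task reduces to showing that every upper bound $u \in P$ of $M_{\leq x}$ satisfies $u \geq x$. Order density of $N$ in $P$ supplies $\sup_P N_{\leq x} = x$, so it is enough to verify that $u$ is an upper bound of $N_{\leq x}$, and this is what I would aim at.

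Fix then $n \in N$ with $n \leq x$ and try to conclude $n \leq u$. The first obstacle is that $\sup_N M_{\leq n} = n$, obtained from order density of $M$ in $N$, only controls those upper bounds of $M_{\leq n}$ which actually lie in $N$, whereas our $u$ is only known to live in $P$. To bridge this, I would invoke the infimum half of order density of $N$ in $P$, namely $\inf_P N_{\geq u} = u$; it suffices to show that $n$ is a lower bound of $N_{\geq u}$ in $P$.

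For this final step, pick an arbitrary $n' \in N$ with $n' \geq u$. Because $M_{\leq n} \subseteq M_{\leq x}$ and $u$ bounds $M_{\leq x}$ from above, $n'$ bounds $M_{\leq n}$ from above, and since $n' \in N$ this makes $n'$ an upper bound inside $N$. Order density of $M$ in $N$ then gives $n' \geq \sup_N M_{\leq n} = n$, so $n$ is indeed a lower bound of $N_{\geq u}$, and hence $n \leq u$, as required.

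The one conceptual point to be careful about is exactly the main obstacle highlighted above: the supremum defining order density of $M$ in $N$ is taken in $N$, so it a priori says nothing about upper bounds that lie outside $N$. The remedy is a symmetric use of the two order densities in opposite directions: the infimum part of the density of $N$ in $P$ allows one to confine attention to candidates $n' \in N$, where the supremum part of the density of $M$ in $N$ then becomes applicable.
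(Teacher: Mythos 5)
Your proof is correct. Structurally it is the order-dual of the paper's argument: the paper fixes a lower bound $z$ of $M_{\geq p}$ and reduces via $N_{\geq p}$, you fix an upper bound $u$ of $M_{\leq x}$ and reduce via $N_{\leq x}$; the two-stage reduction (first the density of $N$ in $P$, then the density of $M$ in $N$) is the same. The genuine difference is at the inner step, and it is in your favour. The paper concludes $z\leq n$ for $n\in N_{\geq p}$ in one line from ``$n=\inf M_{\geq n}\geq z$'', but, as you observe, the order density of $M$ in $N$ only identifies $n$ as the infimum of $M_{\geq n}$ \emph{within} $N$, so it does not by itself dominate a lower bound $z$ that need not lie in $N$. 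You close exactly this point by a second appeal to the order density of $N$ in $P$ (namely $u=\inf N_{\geq u}$), which lets you test $n$ only against elements $n'\in N_{\geq u}$, where the density of $M$ in $N$ genuinely applies; the dual fix in the paper's orientation would be $z=\sup N_{\leq z}$. So your argument is one layer longer, but it makes rigorous the very step that the paper's proof leaves implicit.
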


\begin{proof}
	Let $p \in P$. Clearly, $p$ is a lower bound of $M_{\geq p}$. To show that $p$ is the greatest lower bound of $M_{\geq p}$, let $z\in P$ be another lower bound of $M_{\geq p}$. To obtain $p\geq z$, it is sufficient to show that $N_{\geq p}\subseteq N_{\geq z}$, since then the order density of $N$ in $P$ implies $p=\inf N_{\geq p}\geq \inf N_{\geq z}=z$. Let $n\in N_{\geq p}$. Then $M_{\geq n}\subseteq M_{\geq p}$, hence $z$ is a lower bound of  $M_{\geq n}$. As $M$ is order dense in $N$, we obtain $n=\inf M_{\geq n}\geq z$. Therefore $N_{\geq p}\subseteq N_{\geq z}$. We have shown $p=\inf M_{\geq p}$. A similar argument gives $p=\sup M_{\leq p}$.
\end{proof}

Let $P$ and $Q$ be partially ordered sets and $f\colon P\to Q$ a map. $f$ is called \emph{monotone} if for every $x,y\in P$ with $x\leq y$ one has that $f(x)\leq f(y)$, and 
\emph{order reflecting} if for every $x,y\in P$ with
 $f(x)\leq f(y)$ one has that $x\leq y$.
Note that every order reflecting map is injective.
We call $f$ an \emph{order embedding} if $f$ is monotone and order reflecting.
$f$ is called \emph{order bounded} if every order bounded set is mapped into an order bounded set.

In the next statement, for sets $U\subseteq P$ and $V\subseteq Q$ we use the notation $f[U]$ for the image of $U$ under $f$, and $[V]f$ for the preimage of $V$.

\begin{proposition} \label{pro:infimum}
Let $f\colon P\to Q$ be an order embedding and $M\subseteq P$.	
	\begin{itemize}
	\item[(i)]  If 
	the infimum of $f[M]$ exists in $Q$ and is an element of $f[P]$, then
	the infimum of $M$ exists in $P$ and equals the 
	unique preimage of $\inf f[M]$,
	i.e.\ $[\{\inf f[M]\}]f=\{\inf M\}$.
	\item[(ii)]  Assume that $f[P]$ is order dense in $Q$.
	Then the infimum of $M$ exists in $P$ if and only if
	the infimum of $f[M]$ exists in $Q$ and is an element of $f[P]$.
	\end{itemize}
	Analogous statements are valid for the supremum. 
	\end{proposition}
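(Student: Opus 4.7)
My plan is to treat (i) as a direct consequence of the order embedding property, and then reduce the nontrivial direction of (ii) to a density argument; the reverse direction of (ii) follows immediately from (i).

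For (i), suppose $q := \inf f[M]$ exists in $Q$ and $q = f(p)$ for some (necessarily unique, since order embeddings are injective) $p \in P$. The relation $f(p) \leq f(m)$ for all $m \in M$ combined with the order reflecting property yields $p \leq m$ for all $m \in M$, so $p$ is a lower bound of $M$. If $p' \in P$ is another lower bound of $M$, monotonicity gives $f(p') \leq f(m)$ for all $m \in M$, hence $f(p') \leq f(p)$, and again by order reflecting $p' \leq p$. Thus $p = \inf M$, and its image $f(p) = \inf f[M]$ has the unique preimage $\{\inf M\}$.

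For (ii), the ``$\Leftarrow$'' direction is just (i). For ``$\Rightarrow$'', assume $p = \inf M$ exists in $P$. Monotonicity shows $f(p)$ is a lower bound of $f[M]$. The key step is to show $f(p)$ is the greatest such lower bound. Let $q' \in Q$ satisfy $q' \leq f(m)$ for all $m \in M$. By the order density of $f[P]$ in $Q$ applied at the point $q'$, we have $q' = \sup(f[P])_{\leq q'}$, so it suffices to bound every element of $(f[P])_{\leq q'}$ above by $f(p)$. If $x \in P$ satisfies $f(x) \leq q'$, then $f(x) \leq f(m)$ for all $m \in M$, so $x \leq m$ by order reflecting, whence $x \leq \inf M = p$ and therefore $f(x) \leq f(p)$. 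Taking the supremum gives $q' \leq f(p)$, so $f(p) = \inf f[M]$, which lies in $f[P]$ by construction.

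The main obstacle is the ``$\Rightarrow$'' half of (ii): without the density assumption, an arbitrary lower bound $q' \in Q$ of $f[M]$ need not be comparable with $f(p)$ at all, and the only leverage we have on $q'$ is through elements of $f[P]$ that lie below it, which is precisely what order density provides. The analogous statement for the supremum is obtained by reversing the order (formally, by applying the infimum case to the order embedding $f$ between the reverse partial orders on $P$ and $Q$, noting that order density is self-dual), so no separate argument is needed.
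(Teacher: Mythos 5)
Your proof is correct and follows essentially the same route as the paper's: part (i) via the order reflecting/monotone properties applied to the unique preimage of $\inf f[M]$, and the forward direction of (ii) by writing an arbitrary lower bound $q'$ of $f[M]$ as $\sup\, (f[P])_{\leq q'}$ using order density and bounding that set by $f(\inf M)$. The only cosmetic difference is that you make explicit that the converse direction of (ii) is an instance of (i) and that the supremum case follows by order reversal, which the paper leaves as "shown analogously."
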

\begin{proof}
	For (i), assume that the infimum of $f[M]$ exists in $Q$ and
	is an element of $f[P]$. Since $f$ is injective, 
	there is a unique $p\in P$ with $f(p)=\inf f[M]$. 
	It is sufficient to show that
	$p=\inf M$. As $f$ is order reflecting, $p$ is a lower bound of $M$.
	For any other lower bound $l\in P$ of $M$ the monotony of $f$
	implies $f(l)$ to be a lower bound of $f[M]$. Thus 
	$f(l)\leq \inf f[M]=f(p)$. Since $f$ 
	is order reflecting, we conclude
	$l\leq p$. This proves $p$ to be the greatest lower bound of $M$, i.e.\
	$p = \inf M$. 
	
	In order to prove (ii), assume that the infimum of $M$ exists in $P$.
	We show that $f(\inf M)$ is the infimum of $f[M]$.
	The monotony of $f$ implies $f(\inf M)$ to be a lower bound of 
	$f[M]$. 
	Let $l\in Q$ be a lower bound of $f[M]$. 
	Since $f[P]$ is order dense 
	in $Q$, we know that $l=\sup \{q \in f[P]; q \leq l\}$. In order to prove 
	$l\leq f(\inf M)$ it is sufficient to show that $f(\inf M)$ is an upper bound of $\{q \in f[P]; q \leq l\}$.
	 For $q \in f[P]$ there is $p \in P$ such that
	$f(p)=q$. If furthermore $q \leq l$, we conclude $f(p)=q\leq l \leq f[M]$. 
	Since $f$ is order reflecting, $p$ is a lower bound of $M$.
	This implies $p \leq \inf M$, and the monotony of $f$ shows $q=f(p) \leq f(\inf M)$.
		We have therefore proven $f(\inf M)$ to be an upper bound of $\{q \in f[P]; q \leq l\}$. This implies $f(\inf M)$ to be the infimum of $f[M]$.	
		
		The statements about the supremum are shown analogously.
\end{proof} 	

Let $G$ be a partially ordered abelian group, 
i.e.\ $(G,+,0)$ is an abelian group with a partial order such that for every $x,y,z\in G$ with $x\le y$ it follows $x+z\leq y+z$.
Note that $G_+:=G_{\ge 0}$ is a monoid (with the induced operation from $G$). We call the elements of $G_+$ \emph{positive}.
$G_+$ is called \emph{generating}\footnote{As usual, for $M,N\subseteq G$ we define $M-N:=\{m-n; \, (m,n)\in M \times N\}.$} if $G=G_+-G_+$. Note that $G$ is directed if and only if $G_+$ is generating.  
We say that $G$ is \emph{Archimedean} if for every $x,y\in G$ with $nx\le y$
for all $n\in \N$ one has that $x\le 0$. 
 A directed full subgroup $I$ of $G$ is called an \emph{ideal}.
A subgroup $H$ of $G$ is full if and only if $H\cap G_+$ is full.

$G$ has the \emph{Riesz decomposition property} if for every $x,y\in G_+$ and $w\in [0,x+y]$ there are $u\in [0,x]$ and $v\in [0,y]$ such that $w=u+v$. 
  Observe that $G$ has the Riesz decomposition property if and only if $G$ has the Riesz interpolation property, see e.g.\ \cite[Proposition 2.1]{Goodearl1986}. 
 If $G$ is a lattice, then $G$ is called a \emph{lattice-ordered abelian group}. 
 	Note that every lattice-ordered abelian group satisfies the infinite distributive laws, see
 	\cite[Proposition 1.7]{Goodearl1986}, and hence the equations \eqref{equ:distr_law}. 
 	 For further standard notions in partially ordered abelian groups see \cite{Goodearl1986}.

Let $G$, $H$ be partially ordered abelian groups. 
We call a group homomorphism  $f\colon G\to H$ \emph{additive} and denote
the set of all additive maps  
from $G$ to $H$ by $\operatorname{A}(G,H)$.
As usual, on $\operatorname{A}(G,H)$ a group structure is introduced by means of $f+g\colon G\to H$, $x\mapsto f(x)+g(x)$, where the neutral element is  $0\colon x\mapsto 0$. A translation invariant pre-order on $\operatorname{A}(G,H)$ is defined by $f\leq g$ whenever for every $x\in G_+$ we have $f(x)\leq g(x)$. 
If $G$ is directed, then $\leq$ is a partial order on $\operatorname{A}(G,H)$.
Note that an element in  $\operatorname{A}(G,H)$ is positive if and only if it is monotone. 
We denote the set of all monotone maps in $\operatorname{A}(G,H)$ by $\operatorname{A}_{+}(G,H)$.
An element of the set $\operatorname{A}_{\operatorname{r}}(G,H):=\operatorname{A}_{+}(G,H)-\operatorname{A}_{+}(G,H)$ is called a \emph{regular} map. Finally, we denote
 the set of all order bounded maps in $\operatorname{A}(G,H)$ by $\operatorname{A}_{\operatorname{b}}(G,H)$.
Clearly, $\operatorname{A}_{\operatorname{r}}(G,H)\subseteq \operatorname{A}_{\operatorname{b}}(G,H)$. If $G$ is directed, then $\operatorname{A}(G,H)$, $\operatorname{A}_{\operatorname{b}}(G,H)$ and $\operatorname{A}_{\operatorname{r}}(G,H)$ are partially ordered abelian groups. 

On a real vector space $X$, we consider a partial order $\leq$ on $X$ such that $X$ is a partially ordered abelian group under addition, and for every $\lambda \in \mathbb{R}_+$ and $x\in X_+$ one has that $\lambda x\in X_+$. Then $X$ is called a \emph{partially ordered vector space}. 
 Note that $X$ is Archimedean if and only if $\frac{1}{n}x\downarrow 0$ for every $x \in X_+$.
If a partially ordered vector space $X$ is a lattice, we call $X$ a \emph{vector lattice}. For standard notations in the case that $X$ is a vector
lattice see \cite{Positiveoperators_old}.

If $X$ is an Archimedean directed partially ordered vector space, then there is an essentially unique Dedekind complete vector lattice $X^\delta$ and a linear order embedding $J\colon X \to X^\delta$ such that $J[X]$ is order dense in $X^\delta$. As usual, $X^\delta$ is called the \emph{Dedekind completion} of $X$.

For partially ordered vector spaces $X$ and $Y$, $\operatorname{L}(X,Y)$ denotes the space of all linear operators. 
We set $\operatorname{L}_+(X,Y)=\operatorname{A}_+(X,Y)\cap \operatorname{L}(X,Y)$, $\operatorname{L}_{\operatorname{r}}(X,Y)=\operatorname{A}_{\operatorname{r}}(X,Y)\cap \operatorname{L}(X,Y)$
and $\operatorname{L}_{\operatorname{b}}(X,Y)=\operatorname{A}_{\operatorname{b}}(X,Y)\cap \operatorname{L}(X,Y)$.
If $X$ is directed, $\operatorname{L}(X,Y)$, $\operatorname{L}_{\operatorname{b}}(X,Y)$ and $\operatorname{L}_{\operatorname{r}}(X,Y)$ are partially ordered vector spaces.

\section{Order topology in partially ordered sets} 
In this section, let $P$ be a partially ordered set. We will introduce the order  topology $\tau_o$ on $P$ using net catching sets, which we define next. 

\begin{definition}
	A subset $U\subseteq P$ is called a 
	\emph{net catching set}
	for $x\in P$ if for all nets  $(\hat{x}_\alpha)_{\alpha\in A}$ and $(\check{x}_\alpha)_{\alpha\in A}$ in $P$
	with $\hat{x}_\alpha \uparrow x$ and $\check{x}_\alpha\downarrow x$ there is
	$\alpha \in A$ such that $[\hat{x}_\alpha,\check{x}_\alpha]\subseteq U$.	
\end{definition}

\begin{proposition}\label{pro:net_catching_sets_pos}
	Let $U\subseteq P$ and $x\in P$. The following statements are equivalent.
	\begin{itemize}
		\item[(i)] $U$ is a net catching set for 
		 $x$.
		\item[(ii)] For all nets $(\hat{x}_\alpha)_{\alpha\in A}$ and $(\check{x}_\beta)_{\beta\in B}$ in $P$
		with $\hat{x}_\alpha \uparrow x$ and $\check{x}_\beta\downarrow x$ there are
		$\alpha \in A$ and $\beta \in B$ such that $[\hat{x}_\alpha,\check{x}_\beta]\subseteq U$.
		\item[(iii)] For all subsets $\hat{M}\subseteq P$ being directed upward and
		$\check{M}\subseteq P$ being directed downward with $\sup \hat{M}=x=\inf \check{M}$
		there are $\hat{m}\in \hat{M}$ and $\check{m}\in \check{M}$ such that $[\hat{m},\check{m}]\subseteq U$.
	\end{itemize}
\end{proposition}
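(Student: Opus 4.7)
My plan is to prove the cycle $(ii) \Rightarrow (i) \Rightarrow (iii) \Rightarrow (ii)$.

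The implication $(ii) \Rightarrow (i)$ is immediate: applying (ii) with $B = A$ and evaluating on the diagonal yields the conclusion of (i).

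For $(i) \Rightarrow (iii)$, given $\hat{M} \subseteq P$ directed upward with $\sup \hat{M} = x$ and $\check{M} \subseteq P$ directed downward with $\inf \check{M} = x$, I would form the product directed set $A := \hat{M} \times \check{M}$, where $\hat{M}$ carries the order inherited from $P$ and $\check{M}$ is equipped with the reverse order (so that both factors become directed sets in the usual sense and $A$ is directed). Setting $\hat{x}_{(m,n)} := m$ and $\check{x}_{(m,n)} := n$ produces two nets sharing the same index set. If $(m_1, n_1) \leq (m_2, n_2)$ in $A$, then by construction $m_1 \leq m_2$ in $P$ and $n_1 \geq n_2$ in $P$, so $\hat{x}$ is increasing while $\check{x}$ is decreasing. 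Since the ranges of these nets are exactly $\hat{M}$ and $\check{M}$, one has $\hat{x}_{(m,n)} \uparrow x$ and $\check{x}_{(m,n)} \downarrow x$. Applying (i) delivers $(m_0, n_0) \in A$ with $[m_0, n_0] \subseteq U$, which is the statement required in (iii).

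For $(iii) \Rightarrow (ii)$, I would take the ranges $\hat{M} := \{\hat{x}_\alpha \mid \alpha \in A\}$ and $\check{M} := \{\check{x}_\beta \mid \beta \in B\}$. Monotonicity of the nets combined with directedness of $A$ and $B$ shows that $\hat{M}$ is directed upward and $\check{M}$ is directed downward, and by hypothesis $\sup \hat{M} = x = \inf \check{M}$. Invoking (iii) produces $\hat{m} = \hat{x}_{\alpha_0}$ and $\check{m} = \check{x}_{\beta_0}$ with $[\hat{m}, \check{m}] \subseteq U$, which is precisely (ii).

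There is no deep obstacle here; the only delicate point is in $(i) \Rightarrow (iii)$, where the product index set must carry orderings on its two factors chosen so that the corresponding nets become simultaneously monotone in the correct senses. Reversing the order on $\check{M}$ is what makes this work, and the rest is a routine verification of monotonicity, suprema and infima.
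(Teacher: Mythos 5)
Your proof is correct and follows essentially the same route as the paper: the same cycle (ii)$\Rightarrow$(i)$\Rightarrow$(iii)$\Rightarrow$(ii), with the identical product construction $\hat{M}\times\check{M}$ (ordered component-wise, $\check{M}$ reversed) for (i)$\Rightarrow$(iii) and passage to the ranges of the nets for (iii)$\Rightarrow$(ii). The only step worth making explicit is (ii)$\Rightarrow$(i): from the indices $\alpha,\beta$ supplied by (ii) one passes to a common successor $\gamma$ and uses monotonicity to get $[\hat{x}_\gamma,\check{x}_\gamma]\subseteq[\hat{x}_\alpha,\check{x}_\beta]\subseteq U$, which is presumably what you mean by ``evaluating on the diagonal''.
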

\begin{proof}
	It is clear that (ii)$\Rightarrow$(i). In order to show  (i)$\Rightarrow$(iii), let 
	$\hat{M}$ and $\check{M}$ be as in (iii). We endow $\check{M}$ with the reversed order and define $A:=\hat{M}\times \check{M}$ with the component-wise order on $A$. For $\alpha=(\hat{m},\check{m})\in A$ let 
	$\hat{x}_\alpha := \hat{m}$ and
	$\check{x}_\alpha:=\check{m}$. This  defines nets $(\hat{x}_\alpha)_{\alpha \in A}$
	and $(\check{x}_\alpha)_{\alpha \in A}$ with $\hat{x}_\alpha \uparrow x$
	and $\check{x}_\alpha \downarrow x$. Thus (i) shows the existence of $(\hat{m},\check{m})= \alpha\in A$ such that
	$[\hat{m},\check{m}]=
	[\hat{x}_\alpha,\check{x}_\alpha] \subseteq U$. It remains to show 
	(iii)$\Rightarrow$(ii). Let 
	$(\hat{x}_\alpha)_{\alpha\in A}$ and $(\check{x}_\beta)_{\beta\in B}$ be as in (ii). Define $\hat{M}:=\{\hat{x}_\alpha;\alpha \in A\}$ and $\check{M}:=\{\check{x}_\beta;\beta \in B\}$ and observe that $\hat{M}$
	is directed upward and $\check{M}$ is directed downward with $\sup \hat{M}=x=\inf \check{M}$. From (iii) we conclude the existence of $\hat{m}\in \hat{M}$ and $\check{m}\in \check{M}$ such that
	$[\hat{m},\check{m}]\subseteq U$. There are $\alpha \in A$ and $\beta \in B$ such that $\hat{m}=\hat{x}_\alpha$ and $\check{m}=\check{x}_\beta$, which implies
	$[\hat{x}_\alpha,\check{x}_\beta]=[\hat{m},\check{m}]\subseteq U$. 
\end{proof}
\begin{definition}
	A subset $O$ of $P$ is called \emph{order open} if $O$ is a net catching set for every $x\in O$. A subset $C$ of $P$ is called \emph{order closed} if $P\setminus C$ is order open. Define
	\begin{align*}
	\tau_o(P):=\{O\subseteq P;\, O \mbox{ is order open}\}.
	\end{align*}
\end{definition}
The following is straightforward.
\begin{proposition}
	$\tau_o(P)$ is a topology on $P$.
\end{proposition}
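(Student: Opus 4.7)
The plan is to verify the three axioms of a topology for $\tau_o(P)$: that $\emptyset, P \in \tau_o(P)$, that $\tau_o(P)$ is closed under arbitrary unions, and that it is closed under finite intersections. In each case the verification reduces to checking the net catching condition pointwise, and the definition of a net catching set (using a \emph{common} directed index set $A$ for the two monotone nets $\hat{x}_\alpha \uparrow x$ and $\check{x}_\alpha \downarrow x$) is what makes the finite intersection step go through cleanly.

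For the trivial cases, $\emptyset$ lies in $\tau_o(P)$ vacuously, since there is no $x\in\emptyset$ to check. The set $P$ itself is order open because for any $x\in P$ and any pair of nets with $\hat{x}_\alpha\uparrow x$, $\check{x}_\alpha\downarrow x$ (on a common directed set $A$), every order interval $[\hat{x}_\alpha,\check{x}_\alpha]$ is automatically contained in $P$. For an arbitrary union $O=\bigcup_{i\in I}O_i$ of order open sets, given $x\in O$ pick $i$ with $x\in O_i$; since $O_i$ is a net catching set for $x$, for any monotone nets as above there is $\alpha\in A$ with $[\hat{x}_\alpha,\check{x}_\alpha]\subseteq O_i\subseteq O$, so $O$ is a net catching set for $x$.

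The main (mildly non-trivial) step is the finite intersection, which it suffices to do for two sets $O_1,O_2\in\tau_o(P)$. Fix $x\in O_1\cap O_2$ and nets $\hat{x}_\alpha\uparrow x$, $\check{x}_\alpha\downarrow x$ on a common directed index set $A$. Applying the net catching property of $O_1$ and $O_2$ at $x$ yields indices $\alpha_1,\alpha_2\in A$ with $[\hat{x}_{\alpha_i},\check{x}_{\alpha_i}]\subseteq O_i$ for $i=1,2$. Choose $\alpha\in A$ with $\alpha\geq\alpha_1$ and $\alpha\geq\alpha_2$. Since $\hat{x}_\alpha\uparrow$ and $\check{x}_\alpha\downarrow$, we have $\hat{x}_\alpha\geq\hat{x}_{\alpha_i}$ and $\check{x}_\alpha\leq\check{x}_{\alpha_i}$, hence
\[
[\hat{x}_\alpha,\check{x}_\alpha]\subseteq[\hat{x}_{\alpha_i},\check{x}_{\alpha_i}]\subseteq O_i
\]
for both $i$, so $[\hat{x}_\alpha,\check{x}_\alpha]\subseteq O_1\cap O_2$, as required.

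The only point that even deserves to be called an obstacle is the intersection step, and it is handled entirely by the common-index formulation of the definition; had one instead started from the a priori weaker condition (ii) of Proposition~\ref{pro:net_catching_sets_pos} with independent index sets, one would still conclude by forming products of directed sets, but the version with a single $A$ makes the argument immediate. All other verifications are formal.
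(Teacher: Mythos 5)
Your proof is correct and is exactly the routine verification the paper has in mind when it labels this proposition ``straightforward'' and omits the argument: the only step with any content is the finite intersection, which you handle correctly by using directedness of the common index set $A$ together with the monotonicity $\hat{x}_\alpha\uparrow$, $\check{x}_\alpha\downarrow$ to nest the order intervals. Nothing to add.
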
	
The topology $\tau_o(P)$ (or, shortly, $\tau_o$) is referred to as the \emph{order topology} on $P$. As usual, for a net $(x_\alpha)$ in $P$ converging to $x\in P$ with respect to the topology $\tau_o$ we write
$x_\alpha \xrightarrow{\tau_o} x$.
\begin{remark}
Our definition of the order topology is a straightforward generalisation of the topology given in \cite{Dobbertin84} on complete lattices.
For this, compare \cite[Proposition 1]{Dobbertin84} with \ref{pro:net_catching_sets_pos} (iii).

On the other hand, note that a net catching set is a generalisation of a concept in partially ordered vector spaces introduced in \cite[Definition 3.3]{Imh}. 
By \cite[Theorem 4.2]{Imh}, the order topology coincides with the topology studied in \cite{Imh}.
\end{remark}

\section{Order convergence in partially ordered sets} 
In this section, let $P$ be a partially ordered set. We will introduce three types of order convergence and relate them to $\tau_o$-convergence. 

\begin{definition} \label{def:orderconvergences}
Let $x \in P$ and let
$(x_\alpha)_{\alpha \in A}$ be a net in $P$.
We define
\begin{itemize}
	\item[(i)]
	$x_\alpha \xrightarrow{o_1} x$, 
	if there are nets $(\hat{x}_\alpha)_{\alpha \in A}$
	and $(\check{x}_\alpha)_{\alpha \in A}$ in $P$ such that $\check{x}_\alpha \downarrow x$,
	$\hat{x}_\alpha \uparrow x$ and $\hat{x}_\alpha \leq x_\alpha \leq \check{x}_\alpha$ for every 
	$\alpha \in A$.
	\item[(ii)] 
	$x_\alpha \xrightarrow{o_2} x$, 
	if there are nets $(\hat{x}_\alpha)_{\alpha \in A}$
	and $(\check{x}_\alpha)_{\alpha \in A}$ in $P$ and $\alpha_0 \in A$ such that $\check{x}_\alpha \downarrow x$,
	$\hat{x}_\alpha \uparrow x$ and $\hat{x}_\alpha \leq x_\alpha \leq \check{x}_\alpha$ for every 
	$\alpha \in A_{\geq \alpha_0}$.
	\item[(iii)] 
	$x_\alpha \xrightarrow{o_3} x$, 
	if there are nets $(\hat{x}_\beta)_{\beta \in B}$
	and $(\check{x}_\gamma)_{\gamma \in C}$ in $P$ and a map 
	$\eta\colon B \times C \rightarrow A$ such that $\hat{x}_\beta \uparrow x$,
	$\check{x}_\gamma \downarrow x$ and $\hat{x}_\beta \leq x_\alpha \leq \check{x}_\gamma$ for every 
	$\beta\in B$, $\gamma \in C$ and $\alpha \in A_{\geq \eta(\beta,\gamma)}$.
\end{itemize}
\end{definition}

\begin{remark} \label{rem:linktoliteratureoiconv}
	Note that the $o_1$-convergence is inspired by the classical order convergence in vector lattices, see e.g.\ \cite{Positiveoperators_old}.
	The concepts of $o_2$-convergence and $o_3$-convergence are adopted from \cite{Abra}, where these convergences are considered in vector lattices. In Proposition \ref{pro:char_o_i_poag} below the precise link will be given. The $o_3$-convergence in partially ordered vector spaces is defined in \cite[Section 1.4]{Wulich2017}. Note furthermore that the order convergence concepts studied  in \cite[II.6.3]{Vulikh67} for lattices and in \cite[Definition 1]{Wolk} for partially ordered sets are equivalent to the $o_3$-convergence. This will be established in Proposition \ref{pro:char_o3conv} below.  
\end{remark}

To establish the link to the order convergence concepts given in \cite{Wulich2017} and \cite{Wolk}, we need the following notion.

\begin{definition}
	Let $M$ be a set. A net $(x_\alpha)_{\alpha \in A}$ is called a \emph{direction} if for arbitrary $\alpha \in A$ there is $\beta \in A$ such that $\alpha < \beta$.
\end{definition}

The next lemma gives a link between directions and nets. 

\begin{lemma} \label{lem:directions}
	Let $M$ be a set and let $(x_\alpha)_{\alpha \in A}$ be a net in $M$. If $A \times \mathbb{N}$ is ordered componentwise, $(x_\alpha)_{(\alpha,n)\in A\times \mathbb{N} }$ is a direction and a subnet of $(x_\alpha)_{\alpha \in A}$. 
\end{lemma}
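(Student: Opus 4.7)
The plan is to verify the two assertions separately, treating the result as essentially a bookkeeping lemma about the product order.

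First I would note that the componentwise order on $A \times \mathbb{N}$ makes it a directed set: for $(\alpha_1,n_1),(\alpha_2,n_2) \in A \times \mathbb{N}$, pick an upper bound $\beta \in A$ of $\alpha_1,\alpha_2$ (which exists because $A$, as the index set of a net, is directed) and set $m := \max\{n_1,n_2\}$, so that $(\beta,m)$ dominates both pairs. Hence $(x_\alpha)_{(\alpha,n) \in A \times \mathbb{N}}$ is indeed a net.

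Next, I would check the direction property. Given $(\alpha,n) \in A \times \mathbb{N}$, the element $(\alpha,n+1)$ satisfies $(\alpha,n) \leq (\alpha,n+1)$ coordinatewise, and they differ in the second coordinate, so $(\alpha,n) < (\alpha,n+1)$. This yields the strict successor required in the definition of a direction.

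Finally, to exhibit the subnet structure I would use the projection $\varphi \colon A \times \mathbb{N} \to A$, $\varphi(\alpha,n) := \alpha$. Then $x_{\varphi(\alpha,n)} = x_\alpha$ gives the reindexing, $\varphi$ is monotone since $(\alpha,n) \leq (\beta,m)$ forces $\alpha \leq \beta$, and $\varphi$ is cofinal: given $\alpha_0 \in A$, any $(\beta,m) \geq (\alpha_0,1)$ satisfies $\varphi(\beta,m) = \beta \geq \alpha_0$. These are exactly the requirements for $(x_\alpha)_{(\alpha,n) \in A \times \mathbb{N}}$ to be a subnet of $(x_\alpha)_{\alpha \in A}$.

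There is no real obstacle here; the only subtlety is that one must be mindful of the chosen convention for \emph{subnet} (monotone cofinal reindexing), but all standard conventions are satisfied by $\varphi$, so the lemma follows immediately.
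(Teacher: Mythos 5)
Your proposal is correct and follows essentially the same route as the paper: the paper likewise treats the direction property as immediate and establishes the subnet claim via the projection $(\alpha,n)\mapsto\alpha$, which is monotone with majorising (cofinal) image and satisfies $x_{\varphi(\alpha,n)}=x_\alpha$. The only difference is that you spell out the directedness of $A\times\mathbb{N}$ and the strict successor explicitly, which the paper leaves as obvious.
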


\begin{proof} Clearly $(x_\alpha)_{(\alpha,n)\in A\times \mathbb{N} }$ is a direction. The map $\phi \colon A\times \mathbb{N} \to A$, $(\alpha,n)\mapsto \alpha$ is monotone and $\phi[A\times \mathbb{N}]$ is majorising in $A$. Since $x_\alpha=x_{\phi(\alpha,n)}$ for every $(\alpha,n)\in A \times \mathbb{N}$, the net $(x_\alpha)_{(\alpha,n)\in A\times \mathbb{N} }$ is a subnet of $(x_\alpha)_{\alpha \in A}$.
\end{proof}

In the subsequent proposition, the statement in (iii) is the convergence given in \cite[Definition II.6.3]{Vulikh67}, and the concept in (iv) is the convergence considered in \cite[Definition 1]{Wolk}.

\begin{proposition}\label{pro:char_o3conv}
	Let 
	$x \in P$ and let 
	$(x_\alpha)_{\alpha \in A}$ be a net in $P$. 
	Then the following statements are equivalent.
	\begin{itemize}
		\item[(i)] $x_\alpha \xrightarrow{o_3} x$,
		\item[(ii)] there are nets $(\hat{x}_\beta)_{\beta \in B}$
		and $(\check{x}_\beta)_{\beta \in B}$ in $P$ and a map 
		$\eta\colon B \rightarrow A$ such that $\hat{x}_\beta \uparrow x$,
		$\check{x}_\beta \downarrow x$ and $\hat{x}_\beta \leq x_\alpha \leq \check{x}_\beta$ for every 
		$\beta\in B$ and $\alpha \in A_{\geq \eta(\beta)}$,
		\item[(iii)] there are directions $(\hat{x}_\beta)_{\beta \in B}$
		and $(\check{x}_\gamma)_{\gamma \in C}$ in $P$ and a map 
		$\eta\colon B \times C \rightarrow A$ such that $\hat{x}_\beta \uparrow x$,
		$\check{x}_\gamma \downarrow x$ and $\hat{x}_\beta \leq x_\alpha \leq \check{x}_\gamma$ for every 
		$\beta\in B$, $\gamma \in C$ and $\alpha \in A_{\geq \eta(\beta,\gamma)}$. 
		\item[(iv)] there are sets $\hat{M},\check{M}\subseteq P$ and $\kappa \colon \hat{M}\times \check{M}\rightarrow A$
		such that $\hat{M}$ is directed upward, $\check{M}$ is directed downward, $\sup \hat{M}=x=\inf \check{M}$
		and for every $\hat{m} \in \hat{M}$, $\check{m}\in \check{M}$ and
		$\alpha \in  A_{\geq \kappa(\hat{m},\check{m})}$ we have $\hat{m}\leq x_\alpha \leq \check{m}$.
	\end{itemize}
	
	\begin{proof}
		It is clear that (ii) implies (i) and that (iii) implies (i).
		To show that (i) implies (ii), 
		we assume that there are nets $(\hat{x}_\beta)_{\beta \in B}$
		and $(\check{x}_\gamma)_{\gamma \in C}$ in $P$ and a map 
		$\eta\colon B \times C \rightarrow A$ such that $\hat{x}_\beta \uparrow x$,
		$\check{x}_\gamma \downarrow x$ and $\hat{x}_\beta \leq x_\alpha \leq \check{x}_\gamma$ for every 
		$\beta\in B$, $\gamma \in C$ and $\alpha \in A_{\geq \eta(\beta,\gamma)}$. For $(\beta,\gamma)\in B\times C$ we define $\hat{y}_{(\beta,\gamma)}:=\hat{x}_\beta$ and $\check{y}_{(\beta,\gamma)}:=\check{x}_\gamma$. Observe that $(\hat{y}_\delta)_{\delta\in B\times C}$ is a subnet of $(\hat{x}_\beta)_{\beta \in B}$ and, similarly, $(\check{y}_\delta)_{\delta\in B\times C}$ is a subnet of $(\check{x}_\gamma)_{\gamma \in C}$. Thus $\hat{y}_\delta\uparrow x$ and $\check{y}_\delta\downarrow x$. Furthermore, for $(\beta,\gamma)\in B\times C$ and $\alpha\in A_{\geq \eta(\beta,\gamma)}$ we have $\hat{y}_{(\beta,\gamma)}=\hat{x}_\beta\leq x_\alpha\leq \check{x}_\gamma=\check{y}_{(\beta,\gamma)}$.
		
		We next show that (i) implies (iii).  Let 
		$(\hat{x}_\beta)_{\beta\in B}$,  $(\check{x}_\gamma)_{\gamma \in C}$ and $\eta\colon B \times C \to A$ be as in Definition \ref{def:orderconvergences}. According to Lemma \ref{lem:directions} we consider the directions $(\hat{x}_\beta)_{(\beta,n)\in B\times \mathbb{N}}$,  $(\check{x}_\gamma)_{(\gamma,m) \in C\times \mathbb{N}}$ and define $\tilde{\eta}\colon(B\times \mathbb{N})\times (C\times \mathbb{N})$, $((\beta,n),(\gamma,m))\mapsto\eta(\beta,\gamma)$ to obtain (iii).
				
		To show that (i) implies (iv), set $\hat{M}:=\{\hat{x}_\beta; \beta\in B\}$ and $\check{M}:=\{\check{x}_\gamma; \gamma\in C\}$ and observe that $\hat{M}$ is directed upward, $\check{M}$ is directed downward and $\sup \hat{M}=x=\inf \check{M}$ is satisfied.		
		To construct $\kappa$, note that for $(\hat{m},\check{m})\in\hat{M}\times\check{M}$ there is $(\beta,\gamma)\in B\times C$ such that $\hat{m}=\hat{x}_\beta$ and
		$\check{m}=\check{x}_\gamma$. Hence we can define $\kappa(\hat{m},\check{m}):=\eta(\beta,\gamma)$ and obtain for
	$\alpha \in  A_{\geq \kappa(\hat{m},\check{m})}=A_{\geq \eta(\beta,\gamma)}$ that $\hat{m}=\hat{x}_\beta\leq x_\alpha \leq \check{x}_\gamma=\check{m}$.
		
		Finally we establish that (iv) implies (i). Define $B:=\hat{M}$, $C:=\check{M}$, where $C$ is endowed with the reversed order of $P$. For $\beta\in B$ and $\gamma\in C$ set $\hat{x}_\beta:=\beta$ and $\check{x}_\gamma:=\gamma$, moreover define $\eta:=\kappa$, which yield the desired properties.
	\end{proof}
\end{proposition}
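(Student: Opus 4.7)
The plan is to use (i) as a hub, establish the easy directions (ii)$\Rightarrow$(i) and (iii)$\Rightarrow$(i) first, then prove that (i) implies each of (ii), (iii), (iv), and finally close the circle with (iv)$\Rightarrow$(i).

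The implication (iii)$\Rightarrow$(i) is immediate because every direction is a net. For (ii)$\Rightarrow$(i), I am handed data with a single index set $B$ and map $\eta\colon B\to A$; I take $C:=B$ and, using that $A$ is directed, select for each pair $(\beta,\gamma)\in B\times C$ a common upper bound $\eta'(\beta,\gamma)\in A$ of $\eta(\beta)$ and $\eta(\gamma)$. Then for $\alpha\geq\eta'(\beta,\gamma)$ both inequalities $\hat{x}_\beta\leq x_\alpha$ and $x_\alpha\leq\check{x}_\gamma$ hold, giving (i).

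For (i)$\Rightarrow$(ii), I diagonalise both nets over the product $D:=B\times C$ by setting $\hat{y}_{(\beta,\gamma)}:=\hat{x}_\beta$ and $\check{y}_{(\beta,\gamma)}:=\check{x}_\gamma$, keeping $\eta$ itself as the threshold. The projection $D\to B$ is monotone and has majorising image, so $(\hat{y}_\delta)_{\delta\in D}$ is a subnet of $(\hat{x}_\beta)_{\beta\in B}$; together with Lemma \ref{lem:majosetsandsuppe} this yields $\hat{y}_\delta\uparrow x$, and analogously $\check{y}_\delta\downarrow x$. For (i)$\Rightarrow$(iii), I invoke Lemma \ref{lem:directions} to replace $(\hat{x}_\beta)$ and $(\check{x}_\gamma)$ by the directions $(\hat{x}_\beta)_{(\beta,n)\in B\times\mathbb{N}}$ and $(\check{x}_\gamma)_{(\gamma,m)\in C\times\mathbb{N}}$, and extend $\eta$ by precomposing with the projection onto $B\times C$. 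For (i)$\Rightarrow$(iv), I take $\hat{M}:=\{\hat{x}_\beta;\,\beta\in B\}$ and $\check{M}:=\{\check{x}_\gamma;\,\gamma\in C\}$, which are directed upward and downward by monotonicity of the original nets, with $\sup\hat{M}=x=\inf\check{M}$; the map $\kappa$ is obtained by picking, for each pair $(\hat{m},\check{m})$, indices $\beta,\gamma$ with $\hat{x}_\beta=\hat{m}$ and $\check{x}_\gamma=\check{m}$ and setting $\kappa(\hat{m},\check{m}):=\eta(\beta,\gamma)$.

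To close the loop with (iv)$\Rightarrow$(i), I use $\hat{M}$ and $\check{M}$ themselves as index sets, reversing the order on the latter, and take the identity nets $\hat{x}_m:=m$ and $\check{x}_m:=m$ with threshold $\kappa$. The main obstacle I anticipate is the subnet step inside (i)$\Rightarrow$(ii): verifying that the coordinate-wise re-indexing preserves the property $\uparrow x$ rather than merely keeping $x$ as an upper bound. This is precisely where Lemma \ref{lem:majosetsandsuppe} and the observation that the projection is majorising come into play; once this bookkeeping is settled, everything else in the proposition reduces to unfolding the definitions.
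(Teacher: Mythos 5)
Your proposal is correct and follows essentially the same route as the paper: the easy implications into (i), the diagonalisation over $B\times C$ for (i)$\Rightarrow$(ii) justified via the subnet/majorising-image observation and Lemma \ref{lem:majosetsandsuppe}, the passage to directions over $B\times\mathbb{N}$ and $C\times\mathbb{N}$ for (iii), the value sets $\hat{M},\check{M}$ for (iv), and the identity nets for (iv)$\Rightarrow$(i). The only difference is that you spell out (ii)$\Rightarrow$(i) via a common upper bound of $\eta(\beta)$ and $\eta(\gamma)$ in the directed set $A$, which the paper dismisses as clear; your version is a correct elaboration of the same step.
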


The following proposition gives the general relationships between the different concepts of order convergence. The further discussion below will show that all the concepts differ. 

\begin{proposition}\label{pro:basic_convergences}
		Let 
		$x \in P$ and let 
		$(x_\alpha)_{\alpha \in A}$ be a net in $P$. Then
		\begin{itemize}
			\item[(i)] $x_\alpha \xrightarrow{o_1} x$ implies $x_\alpha \xrightarrow{o_2} x$,
			\item[(ii)] $x_\alpha \xrightarrow{o_2} x$ implies $x_\alpha \xrightarrow{o_3} x$, and
			\item[(iii)] $x_\alpha \xrightarrow{o_3} x$ implies $x_\alpha \xrightarrow{\tau_o} x$.
	\end{itemize}
\end{proposition}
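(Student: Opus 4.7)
The plan is to handle the three implications in order, with the third being the substantive one while the first two are essentially bookkeeping.

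For (i), I would simply pick any $\alpha_0 \in A$ and observe that if the sandwich inequality $\hat{x}_\alpha \leq x_\alpha \leq \check{x}_\alpha$ holds for every $\alpha \in A$, then in particular it holds for every $\alpha \in A_{\geq \alpha_0}$; the monotonicity and infimum/supremum conditions on $(\hat{x}_\alpha)$ and $(\check{x}_\alpha)$ are unaffected. Hence the same nets witness $o_2$-convergence.

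For (ii), I would recycle the $o_2$-witnesses and use the same index set for both the monotone nets and the original net. Specifically, let $B=C=A$ and set $\hat{x}_\beta$, $\check{x}_\gamma$ to be the $o_2$-witnesses. Since $A$ is directed, for every $(\beta,\gamma)\in B\times C$ I can choose $\eta(\beta,\gamma)\in A$ with $\eta(\beta,\gamma)\geq \alpha_0$, $\eta(\beta,\gamma)\geq \beta$ and $\eta(\beta,\gamma)\geq \gamma$. For $\alpha \geq \eta(\beta,\gamma)$ the $o_2$-sandwich gives $\hat{x}_\alpha \leq x_\alpha \leq \check{x}_\alpha$, while monotonicity of $(\hat{x}_\alpha)$ and $(\check{x}_\alpha)$ yields $\hat{x}_\beta \leq \hat{x}_\alpha$ and $\check{x}_\alpha \leq \check{x}_\gamma$, so $\hat{x}_\beta \leq x_\alpha \leq \check{x}_\gamma$ as required by Definition \ref{def:orderconvergences}(iii).

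The main work is in (iii), but even there the structure is dictated by the definitions. I would fix an order open set $O$ with $x\in O$ and show the net is eventually in $O$. By $o_3$-convergence pick nets $(\hat{x}_\beta)_{\beta\in B}$, $(\check{x}_\gamma)_{\gamma\in C}$ with $\hat{x}_\beta \uparrow x$, $\check{x}_\gamma \downarrow x$, together with the map $\eta\colon B\times C \to A$ from the definition. Since $O$ is order open it is a net catching set for $x$, so by the equivalent formulation in Proposition \ref{pro:net_catching_sets_pos}(ii) applied to these two nets (which may have different index sets), there exist $\beta_0\in B$ and $\gamma_0 \in C$ with $[\hat{x}_{\beta_0},\check{x}_{\gamma_0}]\subseteq O$. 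Setting $\alpha_0:=\eta(\beta_0,\gamma_0)$, for every $\alpha \in A_{\geq \alpha_0}$ the $o_3$-sandwich gives $\hat{x}_{\beta_0}\leq x_\alpha \leq \check{x}_{\gamma_0}$, i.e.\ $x_\alpha \in [\hat{x}_{\beta_0},\check{x}_{\gamma_0}]\subseteq O$. Since $O$ was an arbitrary order open neighbourhood of $x$, this proves $x_\alpha \xrightarrow{\tau_o} x$.

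The only subtlety worth highlighting is in (iii): the net catching condition was originally formulated with a common index set, and the $o_3$-monotone nets live on possibly different index sets $B$ and $C$. Invoking the already-established equivalence from Proposition \ref{pro:net_catching_sets_pos}(ii) removes this obstacle cleanly, after which the argument is immediate.
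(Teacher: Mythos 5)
Your proof is correct and follows essentially the same route as the paper: parts (i) and (ii) are the routine verifications the paper leaves as "straightforward" (your use of directedness of $A$ to choose $\eta(\beta,\gamma)\geq\alpha_0,\beta,\gamma$ is exactly the right filling-in), and your part (iii) is the paper's argument verbatim, including the appeal to Proposition \ref{pro:net_catching_sets_pos} to handle the mismatched index sets $B$ and $C$.
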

\begin{proof}
	As (i) and (ii) are straightforward, it remains to show (iii). For this, let $O\in \tau_o$ be a  neighbourhod of $x$. The convergence $x_\alpha \xrightarrow{o_3} x$ means that
	there are nets $(\hat{x}_\beta)_{\beta \in B}$
	and $(\check{x}_\gamma)_{\gamma \in C}$ in $P$ and a map 
	$\eta\colon B \times C \rightarrow A$ such that $\hat{x}_\beta \uparrow x$,
	$\check{x}_\gamma \downarrow x$ and $\hat{x}_\beta \leq x_\alpha \leq \check{x}_\gamma$ for every 
	$\beta\in B$, $\gamma \in C$ and $\alpha \in A_{\geq \eta(\beta,\gamma)}$. Since $O$ is a net catching set for $x$, Proposition \ref{pro:net_catching_sets_pos} shows the existence of $\beta\in B$ and $\gamma\in C$ such that $[\hat{x}_\beta,\check{x}_\gamma]\subseteq O$. Hence for $\alpha\in A_{\geq \eta(\beta,\gamma)}$ we have $x_\alpha\in [\hat{x}_\beta,\check{x}_\gamma]\subseteq O$. 
\end{proof}

\begin{remark}\label{rem:decreasingnet}
	(a) Observe that every net $(x_\alpha)_{\alpha\in A}$ with $x_\alpha\downarrow x\in P$ satisfies $x_\alpha\xrightarrow{o_1}x$, and due to Proposition \ref{pro:basic_convergences} also $x_\alpha\xrightarrow{\tau_o}x$.\\
	(b) Let $M \subseteq P$, let   $(x_\alpha)_{\alpha \in A}$ be a net in $M$ and let $i \in \{1,2,3\}$. Note that if $x_\alpha \xrightarrow{o_i}x\in M$ in $M$, then also $x_\alpha \xrightarrow{o_i}x$ in $P$. An analogue is valid for $\tau_o$-convergence. Note furthermore that the converse statements are not true, in general. This is shown in Example \ref{exa:extensionprop} below, where $M$ is even an order dense subspace of a vector lattice $P$.
\end{remark}

\begin{remark}\label{rem:o_1_and_o_2}
	Let $(x_\alpha)_{\alpha\in A}$ be a net in $P$ and $x\in P$. We have $x_\alpha\xrightarrow{o_2}x$ if and only if there is $\alpha\in A$ such that the net $(x_\beta)_{\beta\in A_{\geq \alpha}}$ satisfies  $x_\beta\xrightarrow{o_1}x$. 
\end{remark}

In general, $o_2$-convergence does not imply $o_1$-convergence.

\begin{proposition} \label{pro:o2noto1}
	Let $x\in P$ have the property that for every $p\in P_{\geq x}$ there is a $q\in P$ such that $p<q$. Then  there is a net $(x_\alpha)_{\alpha\in A}$ in $P$ and  such that $x_\alpha\xrightarrow{o_2}x$, but not $x_\alpha\xrightarrow{o_1}x$.   
\end{proposition}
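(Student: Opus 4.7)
The plan is to exploit the fact that $o_2$-convergence requires the sandwich $\hat{x}_\alpha\le x_\alpha\le\check{x}_\alpha$ only eventually, whereas $o_1$-convergence requires it at every index. I will construct a net whose tail equals $x$ (so $o_2$-convergence becomes automatic), but whose initial part is so rich that monotonicity prevents any choice of $\check{x}_\alpha\downarrow x$ from majorising the net at every index. The first preparatory step is to observe, directly from the hypothesis, that $P_{\geq x}$ has no upper bound in $P$: any upper bound $M$ would satisfy $M\geq x$, hence $M\in P_{\geq x}$, and the hypothesis applied to $M$ then supplies $q>M$ with $q\in P_{\geq x}$, contradicting that $M$ is an upper bound.

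Next I take as index set $A:=P_{\geq x}\cup\{\ast\}$, where $\ast$ is a fresh element declared strictly greater than every element of $P_{\geq x}$; this makes $A$ a directed set with maximum $\ast$ that extends the order inherited from $P$ on $P_{\geq x}$. Define the net by $x_\alpha:=\alpha$ for $\alpha\in P_{\geq x}$ and $x_\ast:=x$. The $o_2$-convergence is immediate from the constant bounds $\hat{x}_\alpha:=\check{x}_\alpha:=x$ (trivially satisfying $\hat{x}_\alpha\uparrow x$ and $\check{x}_\alpha\downarrow x$) together with $\alpha_0:=\ast$, since the sandwich then only needs to be verified at $\alpha=\ast$, where it reduces to $x\leq x\leq x$.

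For the non-$o_1$ part I argue by contradiction: assume nets $\hat{x}_\alpha\uparrow x$ and $\check{x}_\alpha\downarrow x$ exist in $A$ with the sandwich at every $\alpha\in A$. Then for each $p\in P_{\geq x}$ the sandwich at $\alpha=p$ gives $\check{x}_p\geq x_p=p$, and since $x\leq p$ in $A$ the monotonicity of $(\check{x}_\alpha)$ forces $\check{x}_x\geq\check{x}_p\geq p$. Hence $\check{x}_x$ would be an upper bound of $P_{\geq x}$, contradicting the preparatory observation. The main obstacle is really choosing the right index set: simpler attempts, such as indexing over $\mathbb{N}$ along a single chain $p_0<p_1<\cdots$ in $P_{\geq x}$, can be defeated because such a chain may possess an upper bound in $P$, from which an $o_1$-sandwich can then be assembled. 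Using all of $P_{\geq x}$ as the "head" of $A$ is what couples the sandwich requirement to the full set $P_{\geq x}$ and hence brings the full strength of the hypothesis to bear.
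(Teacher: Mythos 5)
Your proposal is correct and follows essentially the same route as the paper: both index the net over $P_{\geq x}$ with a greatest element adjoined at which the net takes the value $x$ (the paper promotes $x$ itself to the top of the index set, you adjoin a fresh $\ast$), so that $o_2$-convergence holds on the trivial one-point tail, and both refute $o_1$-convergence by combining the sandwich inequality $\check{x}_p\geq x_p=p$ with the monotonicity of $(\check{x}_\alpha)$. Your final contradiction --- that $\check{x}_x$ would be an upper bound of $P_{\geq x}$, which your preparatory observation excludes --- is a clean repackaging of the paper's version, which instead reapplies the hypothesis to pick $\beta>\check{x}_\alpha$ and derives $\beta>\beta$.
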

\begin{proof}
	Let $x\in P$ have the above property.
	Consider $A:=P_{\geq x}$ and define a partial order $\preceq$ on $A$, where  on $A\setminus \{x\}$  the induced order from $P$ is taken. Moreover, define for every $y\in A$ that $y\preceq x$. Observe that $A$ is directed upward. Set $x_\alpha:=\alpha$ for every $\alpha\in A$. First we show $x_\alpha\xrightarrow{o_2}x$. We define $\alpha_0:=x$ and $\hat{x}_{\alpha}:=\check{x}_{\alpha}:=x$ for every $\alpha\in A$ and obtain $\hat{x}_\alpha \leq x_\alpha \leq \check{x}_\alpha$ for every $\alpha\in A_{\succeq \alpha_0}=\{x\}$.
	
	It remains to show that $x_\alpha\xrightarrow{o_1}x$ does not hold. Assuming the contrary, there is a net $(\check{x}_{\alpha})_{\alpha\in A}$ with $\check{x}_\alpha\downarrow x$ 
	and $x_\alpha\leq\check{x}_\alpha$ for every $\alpha\in A$.
	By the assumption, there is $\alpha \in A$ such that $\alpha>x$ and $\beta\in A$ such that $\beta> \check{x}_{\alpha}\in A$. Observe that $\beta\geq \check{x}_\alpha\geq x_\alpha=\alpha>x$, hence $\beta\succeq \alpha$ and thus $\beta>\check{x}_\alpha\geq \check{x}_\beta\geq x_\beta=\beta$, which is a contradiction.    
\end{proof}

\begin{remark}\label{rem:o1o2}
	(a) Assume $P$ to be directed upward and downward, $(x_\alpha)_{\alpha \in A}$ to be a net in $P$ such that $\{x_\alpha;\, \alpha \in A\}$ is bounded, and $p \in P$. Then $x_\alpha \xrightarrow{o_1}p$ if and only if $x_\alpha \xrightarrow{o_2}p$. 
	
	One implication follows from Proposition \ref{pro:basic_convergences}. To show the other one, let $x_\alpha \xrightarrow{o_2}p$. Thus there are nets $(\hat{x}_\alpha)_{\alpha \in A}$ and $(\check{x}_\alpha)_{\alpha\in A}$ and $\alpha_0 \in A$ such that $\hat{x}_\alpha \uparrow p$, $\check{x}_\alpha \downarrow p$ and $\hat{x}_\alpha \leq x_\alpha \leq \check{x}_\alpha$ for all $\alpha \in A_{\geq \alpha_0}$. Since $P$ is directed upward and $\{x_\alpha;\, \alpha \in A\}$ is bounded, there is an upper bound $\check{p}$ of $\{x_\alpha ;\, \alpha \in A\}\cup \{\check{x}_{\alpha_0}\}$.
	For $\alpha \in A$ define $\check{y}_\alpha:=\check{x}_\alpha$ if $\alpha \geq \alpha_0$ and $\check{y}_\alpha:=\check{p}$ otherwise. This defines a net $(\check{y}_\alpha)_{\alpha \in A}$ with $\check{y}_\alpha \downarrow p$ and $x_\alpha \leq \check{y}_\alpha$ for every $\alpha \in A$. Similarly we can define a net $(\hat{y}_\alpha)_{\alpha \in A}$ to obtain $x_\alpha \xrightarrow{o_1}p$.
	
	(b) The statement in (a) shows that the definition of order convergence given in \cite[Chapter 1, Section 5]{Peressini67} for nets with bounded domain coincides with the concepts of $o_1$-convergence and $o_2$-convergence.
	
	(c) If $x\in P$ is such that $P_{\geq x}$  is directed upward and $P_{\leq x}$  is directed downward, then the following are equivalent:
	\begin{itemize}
		\item[(i)] For every net $(x_\alpha)_{\alpha\in A}$ in $P$ with $x_\alpha\xrightarrow{o_2} x$ we have that $x_\alpha\xrightarrow{o_1} x$.
		\item[(ii)] $P_{\geq x}$ is bounded above and $P_{\leq x}$ is bounded below.
	\end{itemize}
Indeed, to show (i)$\Rightarrow$(ii), we assume, to the contrary, that (ii) is not valid. 
Suppose w.l.o.g.\ that $P_{\geq x}$ is not bounded from above, thus for every $p\in P_{\geq x}$ there is $r\in P_{\geq x}$ such that $r\not\leq p$. Since $P_{\geq x}$ is directed upward, there is $q\in P_{\geq x}$ such that $p,r\leq q$. As $p<q$, the assumption of Proposition \ref{pro:o2noto1} is satisfied, i.e.\ (i) is not true.
  
We establish (ii)$\Rightarrow$(i). Let  $(x_\alpha)_{\alpha\in A}$ be a net in $P$ such that  $x_\alpha\xrightarrow{o_2} x$, i.e.\ there are nets $(\hat{x}_\alpha)_{\alpha \in A}$
and $(\check{x}_\alpha)_{\alpha \in A}$ in $P$ and $\alpha_0 \in A$ such that $\check{x}_\alpha \downarrow x$,
$\hat{x}_\alpha \uparrow x$ and $\hat{x}_\alpha \leq x_\alpha \leq \check{x}_\alpha$ for every 
$\alpha \in A_{\geq \alpha_0}$. By (ii) there is an upper bound $u\in P$ for $P_{\geq x}$ and a lower bound $l\in P$ for $P_{\leq x}$. For $\alpha\in A$, set $\check{y}_\alpha:=\check{x}_\alpha$ whenever $\alpha\geq\alpha_0$, and $\check{y}_\alpha:=u$ otherwise. Similarly, set $\hat{y}_\alpha:=\hat{x}_\alpha$ whenever $\alpha\geq\alpha_0$, and $\hat{y}_\alpha:=l$ otherwise. Observe that $\check{y}_\alpha \downarrow x$,
$\hat{y}_\alpha \uparrow x$ and $\hat{y}_\alpha \leq x_\alpha \leq \check{y}_\alpha$ for every 
$\alpha \in A$. Thus $x_\alpha\xrightarrow{o_1} x$. 	
\end{remark}


\begin{remark} 
	Due to Remark \ref{rem:o1o2}(c), in every  partially ordered vector space the concepts of $o_1$-convergence and $o_2$-convergence differ.  
	Furthermore, an example of Fremlin in \cite[Example 1.4]{Abra} shows that
	$o_3$-convergence does not imply $o_2$-convergence. For this, use Proposition \ref{pro:char_o_i_poag}	below.  A sequence which is $\tau_o$-convergent, but not $o_3$-convergent, can be found in Example \ref{exa:ordertopconvergentnetnoto3convergent} below. The last two  examples are given in the setting of vector lattices. Note that there are examples where $o_2$-convergence, $o_3$-convergence and $\tau_o$-convergence coincide, see Example \ref{exa:opensubsetsofR} below.
\end{remark}

\begin{proposition}
	\label{pro:o2_o3_convergenceDedekind_complete_lattice}
	Let $P$ be a Dedekind complete lattice, let $(x_\alpha)_{\alpha \in A}$ be a net in $P$ and $x \in P$. Then $x_\alpha \xrightarrow{o_2}x$ if and only if $x_\alpha \xrightarrow{o_3}x$.
\end{proposition}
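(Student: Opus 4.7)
The forward implication $o_2 \Rightarrow o_3$ is already given by Proposition \ref{pro:basic_convergences}(ii), so the content of the proposition is the reverse direction. My plan is to convert an $o_3$-witness (indexed by possibly foreign directed sets) into an $o_2$-witness (indexed by $A$) by taking suprema and infima of the bounding nets over suitable subsets of $A$; Dedekind completeness is exactly what makes these suprema and infima exist.

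For the main construction, I would first invoke the single-index reformulation in Proposition \ref{pro:char_o3conv}(ii): assuming $x_\alpha \xrightarrow{o_3} x$ there are nets $(\hat{x}_\beta)_{\beta\in B}$, $(\check{x}_\beta)_{\beta\in B}$ and a map $\eta\colon B \to A$ with $\hat{x}_\beta\uparrow x$, $\check{x}_\beta\downarrow x$, and $\hat{x}_\beta \le x_\alpha \le \check{x}_\beta$ whenever $\alpha \ge \eta(\beta)$. Fix some $\beta_0 \in B$, set $\alpha_0:=\eta(\beta_0)$, and for $\alpha \in A$ put $B_\alpha := \{\beta \in B;\, \eta(\beta)\le \alpha\}$. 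For $\alpha \ge \alpha_0$ we have $\beta_0 \in B_\alpha$, and the set $\{\hat{x}_\beta;\, \beta \in B_\alpha\}$ is bounded above by $x$ while $\{\check{x}_\beta;\, \beta \in B_\alpha\}$ is bounded below by $x$, so by Dedekind completeness I may define
\[
\hat{y}_\alpha := \sup\{\hat{x}_\beta;\, \beta \in B_\alpha\}, \qquad \check{y}_\alpha := \inf\{\check{x}_\beta;\, \beta \in B_\alpha\}.
\]
For $\alpha \not\ge \alpha_0$ I extend by $\hat{y}_\alpha := \hat{y}_{\alpha_0}$ and $\check{y}_\alpha := \check{y}_{\alpha_0}$.

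It then remains to verify the three $o_2$-conditions. Monotonicity of $(\hat{y}_\alpha)$ follows from $B_{\alpha_1}\subseteq B_{\alpha_2}$ when $\alpha_1 \le \alpha_2$, combined with a simple case analysis that shows the constant extension below $\alpha_0$ cannot violate monotonicity (if $\alpha_1 \le \alpha_2$ and $\alpha_1 \ge \alpha_0$ then necessarily $\alpha_2 \ge \alpha_0$). For the supremum, the directedness of $A$ gives for every $\beta \in B$ some $\alpha \ge \alpha_0,\eta(\beta)$, hence $\bigcup_{\alpha \ge \alpha_0} B_\alpha = B$, so $\sup\{\hat{y}_\alpha;\, \alpha \in A\} = \sup\{\hat{x}_\beta;\, \beta \in B\} = x$; the dual argument handles $\check{y}_\alpha \downarrow x$. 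Finally, for $\alpha \ge \alpha_0$ and every $\beta \in B_\alpha$ we have $\alpha \ge \eta(\beta)$, so $\hat{x}_\beta \le x_\alpha \le \check{x}_\beta$; taking sup over $\beta \in B_\alpha$ in the left inequality and inf in the right yields $\hat{y}_\alpha \le x_\alpha \le \check{y}_\alpha$, as required for $o_2$-convergence. The only real subtlety is ensuring that the extension of $(\hat{y}_\alpha)$, $(\check{y}_\alpha)$ below $\alpha_0$ preserves monotonicity and does not inflate the supremum/deflate the infimum; this is handled cleanly by the constant extension trick because $\hat{y}_{\alpha_0}$ and $\check{y}_{\alpha_0}$ already appear among the relevant values.
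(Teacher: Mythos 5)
Your argument is correct, but it takes a different route from the paper's. You aggregate the \emph{bounding} nets: from the single-index witness of Proposition \ref{pro:char_o3conv}(ii) you form $\hat{y}_\alpha=\sup\{\hat{x}_\beta;\,\beta\in B_\alpha\}$ and $\check{y}_\alpha=\inf\{\check{x}_\beta;\,\beta\in B_\alpha\}$ over the fibres $B_\alpha=\{\beta;\,\eta(\beta)\le\alpha\}$, and all the required verifications (monotonicity from $B_{\alpha_1}\subseteq B_{\alpha_2}$, the limit from $\bigcup_{\alpha\ge\alpha_0}B_\alpha=B$, the sandwich by taking sup/inf over $\beta\in B_\alpha$) go through; the constant extension below $\alpha_0$ is handled correctly. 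The paper instead takes the infimum and supremum of the \emph{tails of the original net}, $M_\alpha=\{x_\kappa;\,\kappa\in A_{\ge\alpha}\}\cup\{x\}$, i.e.\ it builds the liminf/limsup nets, and then reduces to $o_1$-convergence of the truncated net via Remark \ref{rem:o1o2}(a). Your construction has the small advantages of producing the $o_2$-witness directly on all of $A$ (so you do not need Remark \ref{rem:o1o2}(a) or the observation that a Dedekind complete lattice is directed both ways) and of needing only that the sets $\{\hat{x}_\beta\}$ and $\{\check{x}_\beta\}$ are bounded by $x$ itself; the paper's tail construction is the more canonical one and yields the extra information that the sandwiching nets can be chosen independently of the particular $o_3$-witness. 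Both proofs use Dedekind completeness in the same essential way, and both are complete.
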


\begin{proof}
	Due to Proposition \ref{pro:basic_convergences} it is sufficient to show that $x_\alpha \xrightarrow{o_3}x$ implies $x_\alpha \xrightarrow{o_2}x$. Assume that there are nets $(\hat{x}_\beta)_{\beta \in B}$
	and $(\check{x}_\gamma)_{\gamma \in C}$ in $P$ and a map 
	$\eta\colon B \times C \rightarrow A$ such that $\hat{x}_\beta \uparrow x$,
	$\check{x}_\gamma \downarrow x$ and $\hat{x}_\beta \leq x_\alpha \leq \check{x}_\gamma$ for every 
	$\beta\in B$, $\gamma \in C$ and $\alpha \in A_{\geq \eta(\beta,\gamma)}$. Fix $(\beta_0,\gamma_0)\in B \times C$. 
	Set $\alpha_0:=\eta(\beta_0,\gamma_0)$. By Remark \ref{rem:o_1_and_o_2} it is sufficient to prove that $(x_\alpha)_{\alpha \in A_{\geq \alpha_0}}$
	is $o_1$-convergent to $x$.
	
	For $\alpha \in A$ define 
	$M_\alpha:=\{x_{\kappa}; \, \kappa \in A_{\geq \alpha}\}\cup \{x\}$. 
	Note that for $(\beta,\gamma) \in B \times C$ and $\alpha \in A_{\geq \eta(\beta,\gamma)}$ we have 
	$\hat{x}_\beta \leq M_\alpha\leq \check{x}_{\gamma}$. As $P$ is a Dedekind complete lattice, $\hat{y}_\alpha:=\inf M_\alpha$
	and $\check{y}_\alpha:=\sup M_\alpha$ exist
	for $\alpha \in A_{\geq \alpha_0}$. Furthermore $\hat{y}_\alpha \leq \{x_\alpha,x\} \leq \check{y}_\alpha$ 
	for all $\alpha \in A_{\geq \alpha_0}$, $\hat{y}_\alpha \uparrow$ and $\check{y}_\alpha \downarrow$. 
	Let $\hat{y}_\alpha\leq z$ for all $\alpha \in A_{\geq \alpha_0}$. For $\beta \in B$ there is $\alpha \in A_{\geq \alpha_0}$ such that 
	$\eta(\beta,\gamma_0)\leq \alpha$. Hence $\hat{x}_\beta \leq \inf M_{\eta(\beta,\gamma_0)}\leq \inf M_\alpha= \hat{y}_\alpha\leq z$ and we obtain
	$x=\sup\{\hat{x}_\beta;\, \beta \in B\}\leq z$. This shows $\hat{y}_\alpha \uparrow x$. Analogously we get $\check{y}_\alpha \downarrow x$. 
\end{proof}

If we introduce the order topology $\tau_o$ on the partially ordered set of real numbers $\mathbb{R}$, we obtain the standard topology on $\mathbb{R}$. 

\begin{example} \label{exa:opensubsetsofR}
	Let $M \subseteq \mathbb{R}$ be an open set with respect to the standard topology $\tau$ and equip $M$ with the standard order of $\mathbb{R}$. We show that $\tau_o(M)$ is the restriction $\tau(M)$ of $\tau$ to $M$ and that $o_2$- and $o_3$-convergence in $M$ coincide with the  convergence with respect to $\tau(M)$. Note that from Remark \ref{rem:o1o2} (c) 
	it follows that $o_1$-convergence and $o_2$-convergence in $M$ do not coincide. We first show that convergence with respect to $\tau(M)$ implies $o_2$-convergence. Indeed, let $(x_\alpha)_{\alpha\in A}$ be a net in $M$ such that $x_\alpha \xrightarrow{\tau(M)}x \in M$. Since $M$ is open, there is $r>0$ such that  the open ball $B_r(x)\subseteq \mathbb{R}$ with center $x$  and radius $r$ is contained in $M$. Hence there is $\alpha_0\in A$ such that for every $\alpha \in A_{\geq \alpha_0}$ we have $x_\alpha \in B_r(x)$. We therefore assume w.l.o.g.\ that $(x_\alpha)_{\alpha\in A}$ is a net in $B_r(x)$. Since $B_r(x)$ is a Dedekind complete lattice, by Proposition \ref{pro:o2_o3_convergenceDedekind_complete_lattice}
	it is sufficient to show that $x_\alpha \xrightarrow{o_3}x$. 
	For $\beta \in B:=(0,r)$
	let  $\hat{x}_\beta:= x-\beta$ and $\check{x}_\beta:=x+ \beta$. If we equip $B$ with the reversed order of $\mathbb{R}$, we obtain nets $(\hat{x}_\beta)_{\beta \in B}$ and $(\check{x}_\beta)_{\beta \in B}$ in $B_r(x)$ with $\hat{x}_\beta \uparrow x$ and $\check{x}_\beta \downarrow x$. For every $\beta \in B$ there is $\alpha_\beta \in A$ such that for every $\alpha \in A_{\geq \alpha_\beta}$ we have $|x_\alpha-x|\leq \beta$, i.e.\ $\hat{x}_\beta\leq x_\alpha \leq \check{x}_\beta$. We set $\eta \colon B \to A$, $\beta \mapsto \alpha_\beta$, and obtain  $x_\alpha \xrightarrow{o_3}x$. We have now shown that convergence with respect to $\tau(M)$ implies $o_2$-convergence in $M$. Note that $o_2$-convergence implies $o_3$-convergence and that $o_3$-convergence implies convergence with respect to $\tau_o(M)$ in $M$ by Proposition \ref{pro:basic_convergences}. It therefore remains to establish that convergence with respect to $\tau_o(M)$ implies convergence with respect to $\tau(M)$. To show that $\tau(M) \subseteq \tau_o(M)$, let $O \in \tau(M)$ and $x \in O$.  Since $M$ is open in $\R$ with respect to $\tau$ and $O \in \tau(M)$, we conclude $O \in \tau$. Thus there is $r>0$ such that $B_{2r}(x)\subseteq O$. To show that $O$ is a net catching set for $x$ let  $(\hat{x}_\alpha)_{\alpha \in A}$ and $(\check{x}_\alpha)_{\alpha \in A}$ be nets in $M$ such that $\hat{x}_\alpha \uparrow x$ and $\check{x}_\alpha \downarrow x$. Thus there is $\alpha \in A$ such that $[\hat{x}_\alpha,\check{x}_\alpha]\subseteq [x-r,x+r]\subseteq B_{2r}(x)\subseteq O$. This proves $O \in \tau_o(M)$.
\end{example}

Order closed sets can be characterised by means of $o_i$-convergence.

\begin{theorem} 
	\label{thm:orderclosed}
	Let $i\in \{1,2,3\}$ and  $C\subseteq P$. The following statements are equivalent:
	\begin{itemize}
		\item[(i)] $C$ is order closed.
		\item[(ii)] For every net $(x_{\alpha})_{\alpha\in A}$ in $C$ with $x_\alpha\xrightarrow{o_i} x\in P$ it follows that $x\in C$.
	\end{itemize} 
\end{theorem}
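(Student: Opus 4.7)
The plan is to treat the two implications (i)$\Rightarrow$(ii) and (ii)$\Rightarrow$(i) separately, and to use the chain $o_1\Rightarrow o_2\Rightarrow o_3\Rightarrow \tau_o$ from Proposition \ref{pro:basic_convergences} so that only the extreme cases need any real work.

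For (i)$\Rightarrow$(ii), I would fix an arbitrary $i\in\{1,2,3\}$ and argue by contradiction. Suppose $(x_\alpha)_{\alpha\in A}$ lies in $C$ with $x_\alpha\xrightarrow{o_i}x$ but $x\notin C$. Then $x\in P\setminus C$ and, since $C$ is order closed, $P\setminus C\in\tau_o$. By Proposition \ref{pro:basic_convergences}, $x_\alpha\xrightarrow{\tau_o}x$, so $P\setminus C$ is a $\tau_o$-neighbourhood of $x$ and must contain $x_\alpha$ eventually, contradicting $x_\alpha\in C$. This covers all three values of $i$ uniformly.

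For (ii)$\Rightarrow$(i), the key observation is that the hypothesis for $i=1$ is the weakest (because $o_1$-convergence is the strongest notion, hence imposes the conclusion $x\in C$ on the fewest nets). Since $o_1\Rightarrow o_2\Rightarrow o_3$, the version of (ii) for $i=2$ or $i=3$ implies the version for $i=1$. So it suffices to prove (ii)$_1\Rightarrow$(i). I would assume (ii)$_1$ and show $P\setminus C$ is order open: fix $x\in P\setminus C$; if $P\setminus C$ is not a net catching set for $x$, there exist nets $(\hat x_\alpha)_{\alpha\in A}$ and $(\check x_\alpha)_{\alpha\in A}$ with $\hat x_\alpha\uparrow x$, $\check x_\alpha\downarrow x$, such that for every $\alpha\in A$ one has $[\hat x_\alpha,\check x_\alpha]\not\subseteq P\setminus C$. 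Pick $y_\alpha\in[\hat x_\alpha,\check x_\alpha]\cap C$ for each $\alpha$. Then $\hat x_\alpha\leq y_\alpha\leq\check x_\alpha$, so by Definition \ref{def:orderconvergences}(i) we have $y_\alpha\xrightarrow{o_1}x$ with $y_\alpha\in C$. Hypothesis (ii)$_1$ forces $x\in C$, a contradiction.

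I do not anticipate a real obstacle: the argument is essentially a translation between the net catching condition and $o_1$-convergence, made clean by the fact that the definition of $o_1$-convergence matches the definition of a net catching set on the same index set $A$. The only point requiring some care is making sure to use (ii) with the correct index $i$; handling this by reducing every case to (ii)$_1$ via the implications $o_1\Rightarrow o_2\Rightarrow o_3$ avoids having to construct three different witness nets.
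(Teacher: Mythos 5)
Your proof is correct and follows essentially the same route as the paper: both directions rest on the implication chain of Proposition \ref{pro:basic_convergences}, and the hard implication (ii)$\Rightarrow$(i) is handled by the identical witness-net construction from the failure of the net catching property. The only difference is organisational (the paper runs the cycle (i)$\Rightarrow$(ii)$_3$$\Rightarrow$(ii)$_2$$\Rightarrow$(ii)$_1$$\Rightarrow$(i), whereas you prove (i)$\Rightarrow$(ii)$_i$ directly and then reduce to (ii)$_1$), which is immaterial.
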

\begin{proof}
	In this proof, a set $C$ that satisfies (ii) is called $o_i$-closed.
	Observe that from 
	Proposition \ref{pro:basic_convergences} it follows that order closed sets are always $o_3$-closed, $o_3$-closed sets are $o_2$-closed and that $o_2 $-closed sets are $o_1$-closed. It
	remains to show that $o_1$-closed sets are order closed. By contradiction, assume that  $C\subseteq P$ is not order closed. Thus $P\setminus C$ is not order open, i.e.\ there is $x\in P\setminus C$ such that $P\setminus C$ is not a net catching set for $x$. This implies the existence of 	
	nets  $(\hat{x}_\alpha)_{\alpha\in A}$ and $(\check{x}_\alpha)_{\alpha\in A}$ in $P$
	with $\hat{x}_\alpha \uparrow x$ and $\check{x}_\alpha\downarrow x$ such that for every 
	$\alpha \in A$ we have that $[\hat{x}_\alpha,\check{x}_\alpha]\not\subseteq P\setminus C$. Hence, for every $\alpha\in A$ there is $x_\alpha\in[\hat{x}_\alpha,\check{x}_\alpha]\cap C$. Note that $(x_\alpha)_{\alpha\in A}$ is a net in $C$ with $x_\alpha\xrightarrow{o_1}x\in P\setminus C$, hence $C$ is not $o_1$-closed.	
\end{proof}

\begin{corollary}
	\label{cor:orderdenseimpliesordertopologicdense}
	Let $M\subseteq P$ be a lattice with the induced order from $P$. If $M$ is order dense in $P$, then $M$ is dense in $P$ with respect to $\tau_o(P)$.
\end{corollary}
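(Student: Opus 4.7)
The plan is to show that every $x \in P$ is the $\tau_o$-limit of some net in $M$; this places $x$ in the $\tau_o$-closure of $M$ and therefore establishes density. By Proposition \ref{pro:basic_convergences} it suffices to exhibit a net in $M$ that $o_1$-converges to $x$.

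Fix $x \in P$. The key step I would establish first is that $M_{\leq x}$ is directed upward. Given $a, b \in M_{\leq x}$, the join $a \vee_M b$ exists in $M$ because $M$ is a lattice. To show $a \vee_M b \leq x$, let $m \in M_{\geq x}$ be arbitrary; then $m \geq x \geq a, b$, so $m$ is an upper bound of $\{a, b\}$ within $M$, whence $a \vee_M b \leq m$. Taking the infimum over $m \in M_{\geq x}$ and invoking the order density identity $\inf M_{\geq x} = x$ yields $a \vee_M b \leq x$. Dually, $M_{\geq x}$ is directed downward. Consequently, the net $(m)_{m \in M_{\leq x}}$, indexed by itself with the induced order, is increasing and satisfies $m \uparrow x$ by order density. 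Choosing $\hat{x}_m := m$ together with the constant net $\check{x}_m := x$ then witnesses $o_1$-convergence of this net in $M$ to $x$, and Proposition \ref{pro:basic_convergences} delivers the desired $\tau_o$-convergence.

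The main obstacle is the step verifying $a \vee_M b \leq x$: the lattice operation in $M$ is \emph{a priori} only determined by upper bounds coming from within $M$, so without the order density of $M$ in $P$ the join could overshoot $x$. The combination of both hypotheses, $M$ being a lattice \emph{and} $M$ being order dense in $P$, is essential; dropping either would break the argument. One minor edge case worth mentioning is $M_{\leq x} = \emptyset$, which under order density forces $x$ to be a least element of $P$; in this situation the dual net built from $M_{\geq x}$ serves the same role.
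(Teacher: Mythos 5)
Your proof is correct and is essentially the order-dual of the paper's argument: the paper uses the decreasing net indexed by $M_{\geq p}$ (equipped with the reversed order) and concludes via Theorem \ref{thm:orderclosed}, while you use the increasing net indexed by $M_{\leq x}$ and conclude via general net convergence. Your explicit verification that $a\vee_M b\leq x$ (taking the infimum over $m\in M_{\geq x}$ and using $\inf M_{\geq x}=x$) supplies exactly the justification for directedness of the index set that the paper's phrase ``since $M$ is a lattice, we know $A$ to be directed'' leaves implicit in the dual setting, so your version is, if anything, the more carefully argued one.
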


\begin{proof}
	Let $p \in P$. Let $A:=M_{\geq p}$ be equipped with the reversed order of $M$. 
	Since $M$ is a lattice, we know $A$ to be directed. Setting $x_\alpha:=\alpha$ for $\alpha \in A$, we obtain a net $(x_\alpha)_{\alpha \in A}$ in $M$ with $x_\alpha \downarrow$. Since $M$ is order dense in $P$, we know furthermore $\inf\{x_\alpha;\, \alpha \in A\}=\inf A=\inf M_{\geq p}=p$, hence $x_\alpha \downarrow p$.
	Thus $x_\alpha \xrightarrow{o_1}p$ and Theorem \ref{thm:orderclosed} shows that $p$ is contained in the closure of $M$ with respect to $\tau_o(P)$. 
\end{proof}


For $o_i$-limits, we obtain the following monotonicity property.

\begin{proposition}\label{pro:monotony}
	Let $i\in\{1,2,3\}$ and $(x_\alpha)_{\alpha\in A}$ and $(y_\beta)_{\beta\in B}$ be nets in $P$ such that $x_\alpha\xrightarrow{o_i} x\in P$ and $y_\beta\xrightarrow{o_i} y\in P$.
	If for every $\alpha_0\in A$ and $\beta_0\in B$ there are $\alpha\in A_{\geq\alpha_0}$ and $\beta\in B_{\geq\beta_0}$ such that $x_\alpha\leq y_\beta$, then $x\leq y$.
\end{proposition}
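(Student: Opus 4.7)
My plan is to reduce the claim to the case $i=3$ and then unfold the definition of $o_3$-convergence to compare sandwich nets on each side.

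The reduction is immediate: by Proposition \ref{pro:basic_convergences}, $x_\alpha \xrightarrow{o_1} x$ and $x_\alpha \xrightarrow{o_2} x$ each imply $x_\alpha \xrightarrow{o_3} x$, and similarly for $(y_\beta)$. So it suffices to treat $i=3$. Pick witnesses for the two $o_3$-convergences: nets $(\hat{x}_\sigma)_{\sigma \in B_1}$, $(\check{x}_\tau)_{\tau \in C_1}$ and a map $\eta_1\colon B_1 \times C_1 \to A$ with $\hat{x}_\sigma \uparrow x$, $\check{x}_\tau \downarrow x$ and $\hat{x}_\sigma \leq x_\alpha \leq \check{x}_\tau$ for $\alpha \geq \eta_1(\sigma,\tau)$; and analogously nets $(\hat{y}_{\sigma'})_{\sigma' \in B_2}$, $(\check{y}_{\tau'})_{\tau' \in C_2}$ and $\eta_2\colon B_2 \times C_2 \to B$ for $y$.

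The core of the argument is to show the cross-inequality $\hat{x}_\sigma \leq \check{y}_{\tau'}$ for every $\sigma \in B_1$ and $\tau' \in C_2$. Fixing such $\sigma, \tau'$, I choose arbitrary $\tau \in C_1$ and $\sigma' \in B_2$ and set $\alpha_0 := \eta_1(\sigma,\tau)$, $\beta_0 := \eta_2(\sigma',\tau')$. The hypothesis of the proposition yields $\alpha \geq \alpha_0$ and $\beta \geq \beta_0$ with $x_\alpha \leq y_\beta$, so
\[
\hat{x}_\sigma \;\leq\; x_\alpha \;\leq\; y_\beta \;\leq\; \check{y}_{\tau'},
\]
as required.

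Having established $\hat{x}_\sigma \leq \check{y}_{\tau'}$ for all $\sigma, \tau'$, I conclude by passing to the suprema and infima. For each fixed $\tau'$, the element $\check{y}_{\tau'}$ is an upper bound of $\{\hat{x}_\sigma : \sigma \in B_1\}$, whence $x = \sup_{\sigma} \hat{x}_\sigma \leq \check{y}_{\tau'}$. Then $x$ is a lower bound of $\{\check{y}_{\tau'} : \tau' \in C_2\}$, so $x \leq \inf_{\tau'} \check{y}_{\tau'} = y$. The only real bookkeeping is keeping the two index systems separate; no additional structural hypothesis on $P$ is needed since only the definitions of $\sup$ and $\inf$ together with transitivity are used.
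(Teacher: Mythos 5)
Your proof is correct and follows essentially the same route as the paper: reduce to $i=3$ via Proposition \ref{pro:basic_convergences}, establish the cross-inequality $\hat{x}_\sigma\leq\check{y}_{\tau'}$ by combining the two $\eta$-maps with the hypothesis, and conclude from $\hat{x}_\sigma\uparrow x$ and $\check{y}_{\tau'}\downarrow y$. No discrepancies worth noting.
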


\begin{proof}
	By Proposition \ref{pro:basic_convergences} it is sufficient to show the statement for $i=3$. In this case, 	
	there are nets $(\hat{x}_\gamma)_{\gamma \in C}$,  $(\check{x}_\delta)_{\delta \in D}$,
	$(\hat{y}_\varepsilon)_{\varepsilon \in E}$,  $(\check{y}_\varphi)_{\varphi \in F}$	
	 in $P$ and maps 
	$\eta_x\colon C \times D \rightarrow A$, $\eta_y\colon E \times F \rightarrow B$
		 such that $\hat{x}_\gamma \uparrow x$,
	$\check{x}_\delta \downarrow x$, $\hat{y}_\varepsilon \uparrow y$,
	$\check{y}_\varphi \downarrow y$,
		  $\hat{x}_\gamma \leq x_\alpha \leq \check{x}_\delta$, $\hat{y}_\varepsilon \leq y_\beta \leq \check{y}_\varphi$ 		  
		  for every 
	$\gamma \in C$, $\delta\in D$, $\varepsilon\in E$, $\varphi\in F$, $\alpha \in A_{\geq \eta_x(\gamma,\delta)}$ and $\beta \in B_{\geq \eta_y(\varepsilon,\varphi)}$.
	
	For every $\gamma\in C$ and $\varphi\in F$ we have that $\hat{x}_\gamma\le \check{y}_\varphi$. Indeed, let $\delta\in D$, $\varepsilon\in E$ and note that by assumption there are $\alpha\in A_{\geq\eta_x(\gamma,\delta)}$ and $\beta\in B_{\geq\eta_y(\varepsilon,\varphi)}$ such that $\hat{x}_\gamma\leq x_\alpha\leq y_\beta\le \check{y}_\varphi$.
	From $\hat{x}_\gamma \uparrow x$ and 
	$\check{y}_\varphi \downarrow y$ we conclude that $x\leq y$.
	\end{proof}

\begin{remark}
	\label{rem:unique_order_limits}
	Note that Proposition \ref{pro:monotony} immediately implies the uniqueness of the $o_i$-limits. 
\end{remark}

The combination of Theorem \ref{thm:orderclosed} with Proposition \ref{pro:monotony} yields the following statement.
\begin{corollary}\label{cor:upperboundset_orderclosed}
	For every $p\in P$ the sets $P_{\leq p}$ and $P_{\geq p}$ are order closed.
\end{corollary}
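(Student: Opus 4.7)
The plan is to apply Theorem \ref{thm:orderclosed} with $i=1$ (the weakest hypothesis on the convergence, hence the easiest to check), which reduces order closedness of $P_{\leq p}$ and $P_{\geq p}$ to showing they are closed under taking $o_1$-limits of their nets. I will handle $P_{\leq p}$; the argument for $P_{\geq p}$ is symmetric.

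So let $(x_\alpha)_{\alpha \in A}$ be a net in $P_{\leq p}$ with $x_\alpha \xrightarrow{o_1} x$ for some $x \in P$; the goal is $x \leq p$. The idea is to compare $(x_\alpha)$ with the constant net at $p$ via Proposition \ref{pro:monotony}. Concretely, let $B$ be any directed index set (for instance $B=A$, or even a singleton) and set $y_\beta := p$ for all $\beta \in B$. Then $y_\beta \xrightarrow{o_1} p$, since one may take $\hat{y}_\beta = \check{y}_\beta = p$, which trivially satisfy $\hat{y}_\beta \uparrow p$, $\check{y}_\beta \downarrow p$, and $\hat{y}_\beta \leq y_\beta \leq \check{y}_\beta$.

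Now check the hypothesis of Proposition \ref{pro:monotony}: for arbitrary $\alpha_0 \in A$ and $\beta_0 \in B$, choose $\alpha := \alpha_0$ and any $\beta \in B_{\geq \beta_0}$; then $x_\alpha \in P_{\leq p}$ gives $x_\alpha \leq p = y_\beta$. Hence Proposition \ref{pro:monotony} yields $x \leq p$, i.e., $x \in P_{\leq p}$. By Theorem \ref{thm:orderclosed}, $P_{\leq p}$ is order closed. The argument for $P_{\geq p}$ reverses the roles: take the constant net at $p$ as $(x_\alpha)$ and the given net in $P_{\geq p}$ as $(y_\beta)$, conclude $p \leq y$.

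There is really no obstacle here — the entire content is that constant nets $o_1$-converge to their common value and that $o_1$-limits respect the pointwise inequality of the underlying nets, both of which are already packaged in the earlier lemma and proposition. The only minor point to get right is the quantifier structure in Proposition \ref{pro:monotony}, but as shown above, with the constant net on one side the condition becomes trivial since any pair $(\alpha_0,\beta_0)$ already satisfies $x_{\alpha_0} \leq p = y_{\beta_0}$.
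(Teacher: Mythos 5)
Your proof is correct and follows exactly the route the paper intends: its entire proof is the one sentence ``combine Theorem \ref{thm:orderclosed} with Proposition \ref{pro:monotony}'', and your argument simply fills in the details (constant net at $p$, trivial verification of the quantifier condition). Nothing to add.
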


\begin{remark}\label{rem:Floyd_sigma_comp}
Corollary \ref{cor:upperboundset_orderclosed} implies that for every $p\in P$ the set $\{p\}$ is order closed, thus $P$ with the order topology is $\operatorname{T_1}$. Note that the order topology is not Hausdorff, in general. Indeed, 
	a combination of Proposition \ref{pro:basic_convergences} and Remark \ref{rem:decreasingnet} yields that the order topology is always $\sigma$-compatible in the sense of  \cite{Floyd1955}. Thus, \cite[Theorem 1]{Floyd1955} presents an example of a complete Boolean algebra on which the order topology is not Hausdorff.
\end{remark}

The following statement is a generalisation of the sandwich theorem for sequences given in \cite[Chapter II, \S 6,c)]{Vulikh67}.

\begin{proposition} \label{pro:sandwichtheorem}
	\begin{itemize}
		\item[(i)] Let $(x_\alpha)_{\alpha \in A}$, $(y_\alpha)_{\alpha \in A}$ and $(z_\alpha)_{\alpha \in A}$ be nets in $P$ such that
		$x_\alpha \xrightarrow{o_1} p\in P$ and $z_\alpha \xrightarrow{o_1}p$.
		If for every $\alpha \in A$ one has $x_\alpha \leq y_\alpha \leq z_\alpha$, then $y_\alpha \xrightarrow{o_1}p$.
		\item[(ii)] Let $(x_\alpha)_{\alpha \in A}$, $(y_\alpha)_{\alpha \in A}$ and $(z_\alpha)_{\alpha \in A}$ be nets in $P$ such that
		$x_\alpha \xrightarrow{o_2} p\in P$ and $z_\alpha \xrightarrow{o_2}p$.
		If there is $\alpha_0 \in A$ such that for each $\alpha \in A_{\geq \alpha_0}$ we have 
		$x_\alpha \leq y_\alpha \leq z_\alpha$, then
		$y_\alpha \xrightarrow{o_2}p$.
		\item[(iii)] Let $(x_\alpha)_{\alpha \in A}$, $(y_\beta)_{\beta \in B}$ and $(z_\gamma)_{\gamma \in C}$ be nets in $P$ such that
		$x_\alpha \xrightarrow{o_3} p\in P$ and $z_\gamma \xrightarrow{o_3}p$. If for $(\alpha_0,\gamma_0)\in A \times C$ there is
		$\beta_0 \in B$ such that for all $\beta \in B_{\geq \beta_0}$ there is
		$(\alpha,\gamma)\in A_{\geq \alpha_0}\times C_{\geq \gamma_0}$ with
		$x_\alpha \leq y_\beta \leq z_\gamma$, then
		$y_\beta \xrightarrow{o_3}p$.
	\end{itemize} 			 	
\end{proposition}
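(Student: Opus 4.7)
The plan for all three parts is the same: to witness $o_i$-convergence of $y$ to $p$, reuse the lower net coming from $x_\alpha \xrightarrow{o_i} p$ and the upper net coming from $z_\alpha \xrightarrow{o_i} p$. The remaining work is technical, matching index sets and handling ``eventual'' conditions.

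For (i), fix nets $(\hat{x}_\alpha), (\check{x}_\alpha)$ from $x_\alpha \xrightarrow{o_1} p$ and $(\hat{z}_\alpha), (\check{z}_\alpha)$ from $z_\alpha \xrightarrow{o_1} p$, so that in particular $\hat{x}_\alpha \uparrow p$ and $\check{z}_\alpha \downarrow p$. Define $\hat{y}_\alpha := \hat{x}_\alpha$ and $\check{y}_\alpha := \check{z}_\alpha$. Then the chain $\hat{y}_\alpha = \hat{x}_\alpha \leq x_\alpha \leq y_\alpha \leq z_\alpha \leq \check{z}_\alpha = \check{y}_\alpha$ holds for every $\alpha \in A$, and (i) follows directly from Definition~\ref{def:orderconvergences}(i).

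For (ii), let $\alpha_1, \alpha_2 \in A$ be the indices from $o_2$-convergence of $x$ and $z$, respectively. Since $A$ is directed, choose $\alpha^* \in A$ with $\alpha^* \geq \alpha_0, \alpha_1, \alpha_2$. With the same definitions $\hat{y}_\alpha := \hat{x}_\alpha$ and $\check{y}_\alpha := \check{z}_\alpha$, the sandwich $\hat{y}_\alpha \leq y_\alpha \leq \check{y}_\alpha$ holds for all $\alpha \geq \alpha^*$, which gives $y_\alpha \xrightarrow{o_2} p$.

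Part (iii) is the main obstacle because the three nets have distinct index sets $A, B, C$ and the hypothesis couples $(\alpha,\gamma) \in A \times C$ to $\beta \in B$ only in one direction. Fix nets $(\hat{x}_\delta)_{\delta \in D}$, $(\check{x}_{\delta'})_{\delta' \in D'}$ with $\eta_x \colon D \times D' \to A$ witnessing $x_\alpha \xrightarrow{o_3} p$, and analogously $(\hat{z}_\varepsilon)_{\varepsilon \in E}$, $(\check{z}_{\varepsilon'})_{\varepsilon' \in E'}$ with $\eta_z \colon E \times E' \to C$ witnessing $z_\gamma \xrightarrow{o_3} p$. Set $\hat{y}_\delta := \hat{x}_\delta$ and $\check{y}_{\varepsilon'} := \check{z}_{\varepsilon'}$, so $\hat{y}_\delta \uparrow p$ and $\check{y}_{\varepsilon'} \downarrow p$. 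Fix once and for all auxiliary elements $\delta_0' \in D'$ and $\varepsilon_0 \in E$. For any $(\delta, \varepsilon') \in D \times E'$, set $\alpha_0 := \eta_x(\delta, \delta_0')$ and $\gamma_0 := \eta_z(\varepsilon_0, \varepsilon')$, and invoke the hypothesis of (iii) at this pair to obtain a $\beta_0 \in B$; define $\eta_y(\delta, \varepsilon') := \beta_0$. To verify the required bounds, let $\beta \in B_{\geq \eta_y(\delta, \varepsilon')}$. The hypothesis yields $(\alpha, \gamma) \in A_{\geq \alpha_0} \times C_{\geq \gamma_0}$ with $x_\alpha \leq y_\beta \leq z_\gamma$. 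From $\alpha \geq \eta_x(\delta, \delta_0')$ we get $\hat{x}_\delta \leq x_\alpha$, hence $\hat{y}_\delta \leq y_\beta$; from $\gamma \geq \eta_z(\varepsilon_0, \varepsilon')$ we get $z_\gamma \leq \check{z}_{\varepsilon'}$, hence $y_\beta \leq \check{y}_{\varepsilon'}$. The delicate point is simply the careful choice of the fixed auxiliary coordinates $\delta_0'$ and $\varepsilon_0$, which allows the single-variable maps $\eta_x(\cdot, \delta_0')$ and $\eta_z(\varepsilon_0, \cdot)$ to feed into the hypothesis of (iii) and produce the required $\eta_y$.
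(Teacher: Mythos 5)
Your proposal is correct and follows essentially the same route as the paper: in (i) and (ii) you reuse the lower net of $(x_\alpha)$ and the upper net of $(z_\alpha)$, and in (iii) you fix one coordinate of each two-variable index map (your $\delta_0'$ and $\varepsilon_0$ play exactly the role of the paper's fixed $\kappa\in K$ and $\lambda\in L$) to feed the hypothesis and build $\eta_y$ on the product of the remaining index sets. No gaps.
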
			

\begin{proof} To show (i), let $x_\alpha \xrightarrow{o_1} p\in P$ and $z_\alpha \xrightarrow{o_1}p$. Thus there are nets $(\hat{x}_\alpha)_{\alpha \in A}$ and $(\check{z}_\alpha)_{\alpha \in A}$ in $P$ such that $\hat{x}_\alpha \uparrow p$, $\check{z}_\alpha \downarrow p$ and $\hat{x}_\alpha \leq x_\alpha \leq y_\alpha \leq z_\alpha \leq \check{z}_\alpha$ for every $\alpha \in A$, hence we obtain $y_\alpha \xrightarrow{o_1}p$. The proof of (ii) is similar. 
	
To show (iii), assume $x_\alpha \xrightarrow{o_3} p\in P$ and $z_\gamma \xrightarrow{o_3}p$. Hence there are nets $(\hat{x}_\delta)_{\delta \in D}$, $(\check{x}_\kappa)_{\kappa \in K}$, $(\hat{z}_\lambda)_{\lambda \in L}$ and $(\check{z}_\epsilon)_{\epsilon \in E}$ in $P$ and maps $\eta_x \colon D \times K\rightarrow A$ and $\eta_z \colon L \times E \rightarrow C$ such that $\hat{x}_\delta \uparrow p$, $\check{x}_\kappa \downarrow p$, $\hat{z}_\lambda \uparrow p$, $\check{z}_\epsilon \downarrow p$, $\hat{x}_\delta\leq x_\alpha \leq \check{x}_\kappa$ for all $(\delta,\kappa)\in D\times K$ and $\alpha \in A_{\geq \eta_x(\delta,\kappa)}$, and $\hat{z}_\lambda \leq z_\gamma \leq \check{z}_\epsilon$ for all $(\lambda,\epsilon)\in L\times E$ and $\gamma \in C_{\geq \eta_z(\lambda,\epsilon)}$.
Fix $\kappa \in K$ and $\lambda \in L$. By assumption, for $(\delta,\epsilon)\in D \times E$ there is $\beta_{(\delta,\epsilon)}\in A$ such that for all $\beta \in B_{\geq \beta_{(\delta,\epsilon)}}$ there exists $(\alpha,\gamma)\in A_{\geq \eta_x(\delta,\kappa)} \times C_{\geq \eta_z(\lambda,\epsilon)}$ with $x_\alpha\leq y_\beta \leq z_\gamma$, hence also $\hat{x}_\delta\leq x_\alpha \leq y_\beta \leq z_\gamma\leq \check{z}_\epsilon$. Thus $\eta_y \colon D \times E \rightarrow B$ with $\eta_y(\delta,\epsilon):=\beta_{(\delta,\epsilon)}$ defines a map such that $\hat{x}_\delta \leq y_\beta \leq \check{z}_\epsilon$ holds for every $(\delta,\epsilon)\in D \times E$ and $\beta \in B_{\geq \eta_y(\delta,\epsilon)}$. This proves $y_\beta \xrightarrow{o_3}p$.
\end{proof}

If all three nets have the same index set, we can simplify (iii) to the 
statements given in the following Corollary. 

\begin{corollary} 
	\label{cor:sandwichtheorem}
	Let $(x_\alpha)_{\alpha \in A}$, $(y_\alpha)_{\alpha \in A}$ and $(z_\alpha)_{\alpha \in A}$ be nets in $P$ such that
	$x_\alpha \xrightarrow{o_3} p\in P$ and $z_\alpha \xrightarrow{o_3}p$.
	\begin{itemize}
		\item[(i)] If there is $\delta \in A$ such that for each $\alpha \in A_{\geq \delta}$ we have 
		$x_\alpha \leq y_\alpha \leq z_\alpha$, then $y_\alpha \xrightarrow{o_3}p$.
		\item[(ii)] If for every $\delta\in A$ there is is $\alpha_{\delta} \in A$ such that 
		for every $\alpha \in A_{\geq \alpha_{\delta} }$ we have
		$x_{\delta}  \leq y_\alpha \leq z_{\delta}$,
		then $y_\alpha \xrightarrow{o_3}p$.
	\end{itemize}
\end{corollary}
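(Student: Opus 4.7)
The plan is to deduce both parts directly from Proposition \ref{pro:sandwichtheorem}(iii). In each case I need, given $(\alpha_0,\gamma_0) \in A\times A$, to produce a threshold $\beta_0 \in A$ so that for every $\beta \geq \beta_0$ there exist $\alpha \in A_{\geq\alpha_0}$ and $\gamma \in A_{\geq\gamma_0}$ with $x_\alpha \leq y_\beta \leq z_\gamma$. The key observation is that, because the three nets now share a single directed index set $A$, I can always collapse the roles of $\alpha_0$ and $\gamma_0$ (and of $\alpha$ and $\gamma$) into a single index by invoking the directedness of $A$.

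For part (i), I would fix $(\alpha_0,\gamma_0)\in A\times A$ and choose $\beta_0\in A$ with $\beta_0 \geq \alpha_0$, $\beta_0 \geq \gamma_0$ and $\beta_0 \geq \delta$. For any $\beta \in A_{\geq\beta_0}$ we then have $\beta\geq\delta$, so the hypothesis gives $x_\beta \leq y_\beta \leq z_\beta$. Setting $\alpha:=\beta\in A_{\geq\alpha_0}$ and $\gamma:=\beta\in A_{\geq\gamma_0}$ furnishes the data required by Proposition \ref{pro:sandwichtheorem}(iii), which yields $y_\alpha \xrightarrow{o_3} p$.

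For part (ii), I would again fix $(\alpha_0,\gamma_0)\in A\times A$, and this time exploit directedness to pick a single $\delta\in A$ with $\delta \geq \alpha_0$ and $\delta \geq \gamma_0$. Set $\beta_0 := \alpha_\delta$, using the assumption. For each $\beta \in A_{\geq\beta_0}$ the hypothesis yields $x_\delta \leq y_\beta \leq z_\delta$, and the choices $\alpha := \delta\in A_{\geq\alpha_0}$ and $\gamma := \delta \in A_{\geq\gamma_0}$ satisfy $x_\alpha \leq y_\beta \leq z_\gamma$. Again Proposition \ref{pro:sandwichtheorem}(iii) gives the conclusion.

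No substantive obstacle arises: the entire content is bookkeeping with the directed set $A$. The only thing to be careful about is to quantify correctly over $(\alpha_0,\gamma_0)$ \emph{before} choosing $\beta_0$ (respectively $\delta$), so that the index chosen dominates both $\alpha_0$ and $\gamma_0$ simultaneously.
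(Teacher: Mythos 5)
Your proof is correct and follows essentially the same route as the paper: both parts are reduced to Proposition \ref{pro:sandwichtheorem}(iii) by using the directedness of $A$ to dominate $\alpha_0$, $\gamma_0$ (and $\delta$) by a single index. In part (ii) your choice of threshold $\beta_0:=\alpha_\delta$ is the right one (the paper's own write-up slips at this point by quantifying over $\beta\in A_{\geq\beta_0}$ instead of $\beta\in A_{\geq\alpha_{\beta_0}}$), so your version is, if anything, slightly cleaner.
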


\begin{proof} 
	For $(\alpha_0,\gamma_0)\in A\times A$ there is $\beta_0 \in A$ with 
	$\beta_0\geq \delta$, $\beta_0\geq \alpha_0$ and $\beta_0\geq \gamma_0$. 
	For $\beta \in A_{\geq \beta_0}$ the inequality
	$x_\beta \leq y_\beta \leq z_\beta$ is valid. If we set  $\alpha:=\beta$ and $\gamma:=\beta$, we obtain
	$(\alpha,\gamma)\in A_{\geq \alpha_0}\times C_{\geq \gamma_0}$ with $x_\alpha= x_\beta \leq y_\beta \leq z_\beta =z_\gamma$.	Hence Proposition \ref{pro:sandwichtheorem}(iii) implies the statement (i). 
	
	For $(\alpha_0,\gamma_0)\in A\times A$ there is $\beta_0 \in A$ with 
	$\beta_0\geq \alpha_0$ and $\beta_0\geq \gamma_0$. 
	Now the assumption
	implies the existence of $\alpha_{\beta_0}\in A$ with $x_{\beta_0}\leq y_\beta \leq z_{\beta_0}$
	for every $\beta \in A_{\geq \alpha_{\beta_0}}$. 
	For $\beta \in A_{\geq \beta_0}$ we set $\alpha:=\beta_0$
	and $\gamma:=\beta_0$ to get $(\alpha,\beta)\in A_{\geq \alpha_0}\times A_{\geq \gamma_0}$
	with $x_\alpha= x_{\beta_0} \leq y_\beta \leq z_{\beta_0}= z_\gamma$. Hence Proposition \ref{pro:sandwichtheorem}(iii) implies the statement (ii) as well. 		
\end{proof}

In distributive lattices the lattice operations are compatible with the order convergences. 

\begin{proposition} \label{pro:inf_o_i}
	Let $P$ be a distributive lattice and let $(x_\alpha)_{\alpha\in A}$ and $(y_\beta)_{\beta\in B}$ be nets in $P$. Let $A\times B$ be ordered component-wise and let $i\in\{1,2,3\}$. If $x_\alpha\xrightarrow{o_i} x\in P$ and $y_\beta\xrightarrow{o_i} y\in P$, then  the net $(x_\alpha\wedge y_\beta)_{(\alpha,\beta)\in A\times B}$ satisfies  $x_\alpha\wedge y_\beta\xrightarrow{o_i} x\wedge y$. 
	An analogous statement is valid for the supremum.
\end{proposition}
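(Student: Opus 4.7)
The plan is to combine the sandwich data for $(x_\alpha)$ and $(y_\beta)$ into sandwich data for the net $(x_\alpha\wedge y_\beta)_{(\alpha,\beta)\in A\times B}$ by taking componentwise meets. Start with $i=1$: pick nets $\hat{x}_\alpha\uparrow x$, $\check{x}_\alpha\downarrow x$ with $\hat{x}_\alpha\le x_\alpha\le\check{x}_\alpha$, and $\hat{y}_\beta\uparrow y$, $\check{y}_\beta\downarrow y$ with $\hat{y}_\beta\le y_\beta\le\check{y}_\beta$. Define $\hat{z}_{(\alpha,\beta)}:=\hat{x}_\alpha\wedge\hat{y}_\beta$ and $\check{z}_{(\alpha,\beta)}:=\check{x}_\alpha\wedge\check{y}_\beta$. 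Since $\wedge$ is monotone in each argument and $A\times B$ carries the componentwise order, $\hat{z}_{(\alpha,\beta)}$ is increasing and $\check{z}_{(\alpha,\beta)}$ is decreasing, and the sandwich $\hat{z}_{(\alpha,\beta)}\le x_\alpha\wedge y_\beta\le\check{z}_{(\alpha,\beta)}$ is immediate from the monotonicity of $\wedge$.

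The crux is to identify the extremes of the constructed nets. Applying the infinite distributive identity~\eqref{equ:distr_law} with $M:=\{\hat{x}_\alpha;\,\alpha\in A\}$ and $N:=\{\hat{y}_\beta;\,\beta\in B\}$ gives
\[
\sup_{(\alpha,\beta)}\hat{z}_{(\alpha,\beta)}\;=\;\bigvee(M\wedge N)\;=\;\Bigl(\bigvee M\Bigr)\wedge\Bigl(\bigvee N\Bigr)\;=\;x\wedge y,
\]
and the dual identity yields $\inf_{(\alpha,\beta)}\check{z}_{(\alpha,\beta)}=x\wedge y$. Hence $\hat{z}_{(\alpha,\beta)}\uparrow x\wedge y$ and $\check{z}_{(\alpha,\beta)}\downarrow x\wedge y$, which shows $x_\alpha\wedge y_\beta\xrightarrow{o_1} x\wedge y$ and settles the case $i=1$.

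For $i=2$ the same construction works verbatim: if the sandwich inequalities for $x_\alpha$ and $y_\beta$ hold eventually from $\alpha_0$ and $\beta_0$ respectively, then those for $x_\alpha\wedge y_\beta$ hold from $(\alpha_0,\beta_0)$ in the componentwise order on $A\times B$, which is exactly the definition of $o_2$-convergence. For $i=3$ I invoke Proposition~\ref{pro:char_o3conv}(ii) to replace each $o_3$-convergent net by single-indexed sandwich nets $(\hat{x}_\delta,\check{x}_\delta)_{\delta\in D}$ and $(\hat{y}_\epsilon,\check{y}_\epsilon)_{\epsilon\in E}$ with controlling maps $\eta_x\colon D\to A$ and $\eta_y\colon E\to B$; componentwise meets on $D\times E$ together with the map $(\delta,\epsilon)\mapsto(\eta_x(\delta),\eta_y(\epsilon))\in A\times B$ provide an $o_3$-witness for $x_\alpha\wedge y_\beta\xrightarrow{o_3} x\wedge y$.

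The main obstacle is the invocation of infinite distributivity needed to compute the sup and inf of the product nets; everything else is routine bookkeeping on the product index set. The analogous statement for the supremum follows by the dual argument, swapping $\hat{\cdot}\leftrightarrow\check{\cdot}$ and $\wedge\leftrightarrow\vee$ throughout and using the second line of~\eqref{equ:distr_law}.
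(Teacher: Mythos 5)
Your proof is correct and follows the paper's argument exactly: componentwise meets of the two sandwich nets on $A\times B$, with the infinite distributive law \eqref{equ:distr_law} used to identify $\sup_{(\alpha,\beta)}(\hat{x}_\alpha\wedge\hat{y}_\beta)=x\wedge y$, and the same bookkeeping for $i=2,3$ that the paper leaves as ``similar.'' One small correction: for $\inf_{(\alpha,\beta)}(\check{x}_\alpha\wedge\check{y}_\beta)=x\wedge y$ you do not need (and cannot directly use) the dual identity of \eqref{equ:distr_law}, which concerns joins of meets; this equality holds in any lattice, since any lower bound of all $\check{x}_\alpha\wedge\check{y}_\beta$ is a lower bound of both families and hence of $x$ and $y$.
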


\begin{proof}
	We show the result for $i=1$; the cases $i=2$ and $i=3$ are similar.
	Let $(\hat{x}_\alpha)_{\alpha\in A}$,  $(\check{x}_\alpha)_{\alpha\in A}$, 
	$(\hat{y}_\beta)_{\beta\in B}$  and $(\check{y}_\beta)_{\beta\in B}$ be nets in $P$ such that $\hat{x}_\alpha\uparrow x$, $\check{x}_\alpha\downarrow x$,
	$\hat{y}_\beta\uparrow y$, $\check{y}_\beta\downarrow y$, $\hat{x}_\alpha\leq x_\alpha\leq \check{x}_\alpha$ for every $\alpha\in A$, and $\hat{y}_\beta\leq y_\beta\leq \check{y}_\beta$ for every $\beta\in B$.
	We get immediately that 
	$\hat{x}_\alpha\wedge\hat{y}_\beta\leq x_\alpha\wedge y_\beta\leq \check{x}_\alpha\wedge \check{y}_\beta$ for every $(\alpha,\beta)\in A\times B$ and that
	the net $\left(\check{x}_\alpha\wedge \check{y}_\beta\right)_{(\alpha,\beta)\in A\times B}$ satisfies  $\check{x}_\alpha\wedge \check{y}_\beta\downarrow x\wedge y$. Furthermore, \eqref{equ:distr_law} with $M=\{\hat{x}_\alpha;\, \alpha\in A\}$ and $N=\{\hat{y}_\beta;\,\beta\in B\}$ implies $\hat{x}_\alpha\wedge \hat{y}_\beta\uparrow x\wedge y$.
\end{proof}
\begin{remark}\label{rem:subnet_o_i}
	Let $(x_\alpha)_{\alpha\in A}$ be a net in $P$ and let $(y_\beta)_{\beta\in B}$ be a subnet of $(x_\alpha)_{\alpha\in A}$. Let $x\in P$ and fix $i\in\{1,2,3\}$.
	If 	$x_\alpha \xrightarrow{o_i} x$, then $y_\beta\xrightarrow{o_i} x$. 
	This will be useful in combination with the following statement. 	
	Let $Q$ be a partially ordered set. For a net $(x_\alpha)_{\alpha\in A}$ in $P$ and $(y_\alpha)_{\alpha\in A}$  in $Q$ and a map $f\colon P\times Q\to Q$ the net $(f(x_\alpha,y_\alpha))_{\alpha\in A}$
is a subnet of $(f(x_\alpha,y_\beta))_{(\alpha,\beta)\in A\times A}$.

In particular, if $(x_\alpha)_{\alpha 
	\in A}$ and $(y_\alpha
)_{\alpha \in A}$ are nets in a distributive lattice $P$ with $x_\alpha \xrightarrow{o_i} x\in P$ and $y_\alpha
\xrightarrow{o_i} y\in P$, then Proposition \ref{pro:inf_o_i}
shows that the net $(x_\alpha
\wedge y_\alpha)_{\alpha
	\in A}$ satisfies $x_\alpha \wedge y_\alpha \xrightarrow{o_i} x\wedge y$. 

This technique will also be applied to the addition of nets in partially ordered abelian groups and the multiplication of a scalar net and a net in a partially ordered vector space in the subsequent discussion. 
\end{remark}
\section{Continuous maps on partially ordered sets}
In this section, $P$ and $Q$ are partially ordered sets. For $o_1$-, $o_2$-, $o_3$- and $\tau_o$-convergence, we will introduce the corresponding concepts of continuity. It will be shown that for monotone maps these concepts are equivalent. 

\begin{definition} 
	\label{def:ordercontinuity}
	A map $f\colon P\to Q$ is called
	\begin{itemize}
		\item[(i)] \emph{$o_i$-continuous in $x\in P$}, if for every net $(x_\alpha)_{\alpha\in A}$ with $x_\alpha\xrightarrow{o_i}x$ we have that $f(x_\alpha)\xrightarrow{o_i}f(x)$ (where $i\in \{1,2,3\}$). 
		\item[(ii)] \emph{order continuous in $x\in P$}, if it is continuous in $x$ with respect to the order topologies $\tau_o(P)$ and $\tau_o(Q)$, respectively.
	\end{itemize}
	$f$ is called \emph{$o_i$-continuous} (\emph{order continuous}, respectively) if it is $o_i$-continuous (order continuous, respectively) in $x$ for every $x\in P$.
	\end{definition}

\begin{theorem}\label{thm:ordercontinuous}
	Let $i\in \{1,2,3\}$.
	Every $o_i$-continuous map  $f\colon P\to Q$ is order continuous. 
\end{theorem}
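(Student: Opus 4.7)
The plan is to reduce topological continuity to a statement about preimages of order closed sets, and then exploit Theorem \ref{thm:orderclosed}, which characterises order closed sets by $o_i$-convergence for \emph{each} $i\in\{1,2,3\}$. Recall that a map between topological spaces is continuous if and only if the preimage of every closed set is closed. So it suffices to show that for every order closed $C\subseteq Q$, the preimage $[C]f$ is order closed in $P$.

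To do this, fix $i\in\{1,2,3\}$ and an order closed $C\subseteq Q$, and let $(x_\alpha)_{\alpha\in A}$ be a net in $[C]f$ with $x_\alpha \xrightarrow{o_i} x$ for some $x\in P$; by Theorem \ref{thm:orderclosed} it is enough to deduce $x\in [C]f$. Since each $x_\alpha\in [C]f$, we have $f(x_\alpha)\in C$. By the $o_i$-continuity of $f$ in $x$, it follows that $f(x_\alpha)\xrightarrow{o_i} f(x)$ in $Q$. Since $C$ is order closed in $Q$, applying Theorem \ref{thm:orderclosed} in $Q$ yields $f(x)\in C$, hence $x\in [C]f$. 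Another application of Theorem \ref{thm:orderclosed} (this time in $P$) shows that $[C]f$ is order closed, which gives the desired topological continuity of $f$ between $(P,\tau_o(P))$ and $(Q,\tau_o(Q))$.

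There is essentially no obstacle here; the work was done in Theorem \ref{thm:orderclosed}, whose two-way use (once in $Q$ to extract $f(x)\in C$, once in $P$ to package the result as order closedness of $[C]f$) makes the argument uniform in $i$. The only subtle point is that Theorem \ref{thm:orderclosed} must hold for the specific $i$ for which $f$ is assumed $o_i$-continuous — and it does, since that theorem is stated for each of $i\in\{1,2,3\}$ independently.
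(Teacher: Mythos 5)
Your proposal is correct and follows exactly the paper's own argument: reduce continuity to order closedness of preimages of order closed sets, then apply Theorem \ref{thm:orderclosed} once in $Q$ to get $f(x)\in C$ and once in $P$ to conclude that $[C]f$ is order closed. Nothing is missing.
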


\begin{proof}
	We show that for every order closed set $C\subseteq Q$ the preimage $[C]f$ is order closed in $P$.
			Indeed, let $C\subseteq Q$ be order closed. By Theorem \ref{thm:orderclosed} it suffices to show that for every net $(x_\alpha)_{\alpha\in A}$ in $[C]f$ with $x_\alpha\xrightarrow{o_i}x\in P$ we have that $x\in [C]f$.
			Since $f$ is $o_i$-continuous, we obtain $f(x_\alpha)\xrightarrow{o_i}f(x)$. Since $(f(x_\alpha))_{\alpha\in A}$ is a net in $C$ and $C$ is order closed, Theorem \ref{thm:orderclosed} implies that $f(x)\in C$, hence $x\in [C]f$.
	\end{proof}
	
To show that all concepts introduced in Definition \ref{def:ordercontinuity} coincide for monotone maps, we need the following lemma. 

\begin{lemma}\label{lem:topologicalconv_inf}
	Let $(x_\alpha)_{\alpha\in A}$ be a net in $P$ with $x_\alpha\xrightarrow{\tau_o}x\in P$. 
	\begin{itemize} \item[(i)]  If $\inf\{x_\alpha;\alpha\in A\}$ exists, then $\inf\{x_\alpha;\alpha\in A\}\leq x$. 
	\item[(ii)] If for every $\alpha\in A$ we have $x_\alpha\in P_{\geq x}$, then $\inf\{x_\alpha;\alpha\in A\}$ exists and satisfies $\inf\{x_\alpha;\alpha\in A\}=x$.
\end{itemize}
\end{lemma}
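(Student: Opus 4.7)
The plan is to deduce both statements as fairly direct consequences of Corollary \ref{cor:upperboundset_orderclosed}, which asserts that the sets $P_{\leq p}$ and $P_{\geq p}$ are order closed for every $p \in P$, combined with the elementary topological fact that if a net lies inside a closed set and converges topologically, its limit also belongs to that closed set.

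For part (i), set $y := \inf\{x_\alpha; \, \alpha \in A\}$. Then $x_\alpha \in P_{\geq y}$ for every $\alpha \in A$, so $(x_\alpha)_{\alpha\in A}$ is a net in the order closed set $P_{\geq y}$. Since $x_\alpha \xrightarrow{\tau_o} x$, the fact that $P_{\geq y}$ is $\tau_o$-closed (its complement is $\tau_o$-open, hence cannot contain $x$ without eventually containing the net) forces $x \in P_{\geq y}$, i.e.\ $\inf\{x_\alpha; \,\alpha \in A\} = y \leq x$. The elementary topological implication used here deserves a brief justification but is standard; I would invoke it without detailed proof, perhaps with a pointer to the definition of $\tau_o$-closed sets.

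For part (ii), assume additionally $x_\alpha \in P_{\geq x}$ for every $\alpha \in A$. Then $x$ is a lower bound of $\{x_\alpha; \,\alpha \in A\}$, so it remains to show that $x$ is the greatest lower bound. Let $z \in P$ be any lower bound of $\{x_\alpha; \,\alpha \in A\}$. Then $(x_\alpha)_{\alpha \in A}$ is a net in the order closed set $P_{\geq z}$, which by the same argument as in (i) implies $x \in P_{\geq z}$, i.e.\ $z \leq x$. Hence $\inf\{x_\alpha; \,\alpha \in A\}$ exists and equals $x$.

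There is essentially no obstacle here beyond quoting Corollary \ref{cor:upperboundset_orderclosed} and the general topological fact on closed sets and convergent nets. The only point worth noting is that in (ii) the existence of the infimum is itself part of the claim, so the argument must be phrased to both exhibit $x$ as a lower bound and verify it is the greatest, rather than assuming the infimum a priori.
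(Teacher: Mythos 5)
Your proposal is correct and follows essentially the same route as the paper: the paper likewise reduces both parts to showing that every lower bound $p$ of $\{x_\alpha;\,\alpha\in A\}$ satisfies $p\leq x$, and then applies Corollary \ref{cor:upperboundset_orderclosed} together with the fact that a $\tau_o$-closed set contains the $\tau_o$-limits of its nets. The only cosmetic difference is that the paper states this reduction once and runs the argument a single time, whereas you run it separately for $y=\inf\{x_\alpha;\,\alpha\in A\}$ in (i) and for an arbitrary lower bound $z$ in (ii).
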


\begin{proof}
	Note that for both statements it is sufficient to show that for every lower bound $p$ of 
	$\{x_\alpha;\alpha\in A\}$ we have 
	$p\leq x$.
	
	Let $p$ be a lower bound of 
	$\{x_\alpha;\alpha\in A\}$, i.e.\ for every $\alpha\in A$ we have $x_\alpha\in P_{\geq p}$. Since  $x_\alpha\xrightarrow{\tau_o}x$ and  $P_{\geq p}$ is order closed by Corollary \ref{cor:upperboundset_orderclosed}, we conclude $x\in P_{\geq p}$, i.e.\ $p\leq x$.
\end{proof}

\begin{theorem} \label{thm:monotone_ordercont}
	Let $f\colon P\to Q$ be a monotone map and $i\in\{1,2,3\}$. Then the following statements are equivalent:
	\begin{itemize}
		\item[(i)] $f$ is $o_i$-continuous.
		\item[(ii)] $f$ is order continuous.
		\item[(iii)] For every net $(x_\alpha)_{\alpha\in A}$ in $P$ and $x\in P$ the following implications are valid: 
		\begin{itemize}
			\item[(a)] 
		If $x_\alpha\downarrow x$ then $\inf\{f(x_\alpha);\alpha\in A\}$ exists and satisfies $\inf\{f(x_\alpha);\alpha\in A\}=f(x)$.
		\item[(b)]
		If $x_\alpha\uparrow x$ then $\sup\{f(x_\alpha);\alpha\in A\}$ exists and satisfies $\sup\{f(x_\alpha);\alpha\in A\}=f(x)$.
			\end{itemize}
	\end{itemize} 
\end{theorem}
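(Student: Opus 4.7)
The plan is to show the cycle (i)$\Rightarrow$(ii)$\Rightarrow$(iii)$\Rightarrow$(i). The implication (i)$\Rightarrow$(ii) is immediate from Theorem \ref{thm:ordercontinuous}, so the two substantive implications are (ii)$\Rightarrow$(iii) and (iii)$\Rightarrow$(i). Monotonicity of $f$ will play the central role in both.

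For (ii)$\Rightarrow$(iii), I would treat (iii)(a); the argument for (iii)(b) is symmetric. Let $x_\alpha \downarrow x$. By Remark \ref{rem:decreasingnet}(a) we have $x_\alpha \xrightarrow{\tau_o} x$, and by order continuity $f(x_\alpha)\xrightarrow{\tau_o} f(x)$. Since $f$ is monotone and $x \leq x_\alpha$ for every $\alpha$, the whole net $(f(x_\alpha))_{\alpha \in A}$ lies in $Q_{\geq f(x)}$. Lemma \ref{lem:topologicalconv_inf}(ii) (applied in $Q$) then yields $\inf\{f(x_\alpha);\,\alpha \in A\} = f(x)$, which is exactly (iii)(a). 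This step is where Lemma \ref{lem:topologicalconv_inf} and the order closedness of $Q_{\geq f(x)}$ from Corollary \ref{cor:upperboundset_orderclosed} do the real work.

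For (iii)$\Rightarrow$(i), I would unfold the definition of $o_i$-convergence and apply (iii) to the witnessing monotone nets. Suppose $x_\alpha \xrightarrow{o_i} x$ and, for concreteness, consider $i = 3$: there exist $(\hat{x}_\beta)_{\beta \in B}$, $(\check{x}_\gamma)_{\gamma \in C}$ and $\eta\colon B\times C\to A$ with $\hat{x}_\beta \uparrow x$, $\check{x}_\gamma \downarrow x$, and $\hat{x}_\beta \leq x_\alpha \leq \check{x}_\gamma$ whenever $\alpha \geq \eta(\beta,\gamma)$. Applying (iii)(b) to $(\hat{x}_\beta)$ and (iii)(a) to $(\check{x}_\gamma)$ gives $f(\hat{x}_\beta) \uparrow f(x)$ and $f(\check{x}_\gamma) \downarrow f(x)$; monotonicity of $f$ transports the sandwich inequality, so with the same $\eta$ we obtain $f(x_\alpha) \xrightarrow{o_3} f(x)$. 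The cases $i = 1, 2$ are handled identically, using the shared index set $A$ (and the tail $A_{\geq \alpha_0}$ for $i=2$) instead of the double-indexed witnesses.

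The main obstacle is the (ii)$\Rightarrow$(iii) step, because $\tau_o$-convergence a priori only guarantees that $f(x)$ lies in the closure of $\{f(x_\alpha)\}$, which is much weaker than being the infimum. Monotonicity upgrades this: it forces the tail to sit inside the order closed set $Q_{\geq f(x)}$, and Lemma \ref{lem:topologicalconv_inf}(ii) then identifies the limit as the infimum. Once (iii) is in hand, the converse direction (iii)$\Rightarrow$(i) is essentially a mechanical verification.
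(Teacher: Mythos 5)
Your proposal is correct and follows essentially the same route as the paper: the cycle (i)$\Rightarrow$(ii)$\Rightarrow$(iii)$\Rightarrow$(i), with (i)$\Rightarrow$(ii) delegated to Theorem \ref{thm:ordercontinuous}, the step (ii)$\Rightarrow$(iii) handled via Remark \ref{rem:decreasingnet}, monotonicity, and Lemma \ref{lem:topologicalconv_inf}(ii), and (iii)$\Rightarrow$(i) by applying (iii) to the witnessing monotone nets and transporting the sandwich inequalities with the same index map. No gaps; your identification of (ii)$\Rightarrow$(iii) as the substantive step is exactly where the paper also places the weight.
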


\begin{proof}
	The implication (i)$\Rightarrow$(ii) is contained in Theorem \ref{thm:ordercontinuous}. We show (ii)$\Rightarrow$(iii). Let $(x_\alpha)_{\alpha\in A}$ be a net in $P$ such that $x_\alpha\downarrow x\in P$. Due to Remark \ref{rem:decreasingnet} and Proposition \ref{pro:basic_convergences} this implies $x_\alpha\xrightarrow{\tau_o} x$. Since $f$ is order continuous, we obtain $f(x_\alpha)\xrightarrow{\tau_o} f(x)$. Furthermore, the monotony of $f$ yields for every $\alpha\in A$ that $f(x_\alpha)\in Q_{\geq f(x)}$. Thus
	Lemma \ref{lem:topologicalconv_inf} (ii) implies that $\inf\{f(x_\alpha);\alpha\in A\}$ exists and satisfies  $\inf\{f(x_\alpha);\alpha\in A\}= f(x)$. The second statement in (iii) is shown analogously.
	
	It remains to show (iii)$\Rightarrow$(i). We proof this implication for $i=3$; the argumentation for $i\in\{1,2\}$ is similar. Let $(x_\alpha)_{\alpha\in A}$ be a net such that $x_\alpha\xrightarrow{o_3}x\in P$, i.e.\ there are nets $(\hat{x}_\beta)_{\beta \in B}$
	and $(\check{x}_\gamma)_{\gamma \in C}$ in $P$ and a map 
	$\eta\colon B \times C \rightarrow A$ such that $\hat{x}_\beta \uparrow x$,
	$\check{x}_\gamma \downarrow x$ and $\hat{x}_\beta \leq x_\alpha \leq \check{x}_\gamma$ for every 
	$\beta\in B$, $\gamma \in C$ and $\alpha \in A_{\geq \eta(\beta,\gamma)}$. The monotony of $f$ and condition (iii) implies that $f(\hat{x}_\beta) \uparrow f(x)$ and 
	$f(\check{x}_\gamma) \downarrow f(x)$.
	Furthermore the monotony of $f$ yields $f(\hat{x}_\beta) \leq f(x_\alpha) \leq f(\check{x}_\gamma)$ for every 
	$\beta\in B$, $\gamma \in C$ and $\alpha \in A_{\geq \eta(\beta,\gamma)}$. Thus $f(x_\alpha)\xrightarrow{o_3}f(x)$. 
\end{proof}

Combining Theorem \ref{thm:monotone_ordercont} and Proposition \ref{pro:infimum} we obtain the following statement.
\begin{corollary} \label{coro:orderembeddingswithroderdenseimagesarecontinuous}
	Every order embedding $f\colon P\to Q$ for which $f[P]$ is order dense in $Q$ is order continuous (and, hence, $o_i$-continuous, where $i\in\{1,2,3\}$).
\end{corollary}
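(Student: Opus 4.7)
The plan is to reduce the claim to the characterisation of order-continuity for monotone maps, namely Theorem \ref{thm:monotone_ordercont}, and to verify the preservation-of-monotone-limits condition (iii)\,(a)--(b) there by invoking Proposition \ref{pro:infimum}. Since an order embedding is by definition monotone, $f$ falls within the scope of Theorem \ref{thm:monotone_ordercont}, so it suffices to show that whenever $x_\alpha \downarrow x$ in $P$ we have $\inf\{f(x_\alpha);\, \alpha \in A\} = f(x)$ in $Q$, and symmetrically for suprema; once this is established, Theorem \ref{thm:monotone_ordercont} simultaneously yields order continuity and $o_i$-continuity for each $i\in\{1,2,3\}$.

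To verify the infimum case, suppose $(x_\alpha)_{\alpha\in A}$ is a net in $P$ with $x_\alpha \downarrow x$. Then $M:=\{x_\alpha;\,\alpha\in A\}$ has infimum $x$ in $P$. Because $f[P]$ is order dense in $Q$, Proposition \ref{pro:infimum}(ii) guarantees that the infimum of $f[M]$ exists in $Q$ and lies in $f[P]$. Proposition \ref{pro:infimum}(i) then identifies this infimum: its unique preimage under $f$ is $\inf M = x$, so $\inf f[M] = f(x)$. The supremum case is entirely analogous, using Proposition \ref{pro:infimum} in its form for suprema. This confirms condition (iii) of Theorem \ref{thm:monotone_ordercont}, so $f$ is order continuous and $o_i$-continuous.

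There is no real obstacle; the only point requiring care is to notice that Proposition \ref{pro:infimum}(ii) needs both hypotheses—$f$ order embedding \emph{and} $f[P]$ order dense in $Q$—to deliver the existence of $\inf f[M]$ in $Q$, which is exactly what the statement of the corollary supplies. The monotony of an order embedding is immediate from the definition, so no further preparatory lemma is needed, and the whole argument consists of matching hypotheses to the two cited results.
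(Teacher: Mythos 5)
Your proposal is correct and follows exactly the paper's route: the paper proves this corollary simply by "combining Theorem \ref{thm:monotone_ordercont} and Proposition \ref{pro:infimum}", which is precisely the reduction you carry out, with the slight extra care of using Proposition \ref{pro:infimum}(ii) for existence and (i) for the identification $\inf f[M]=f(x)$. Nothing is missing.
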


\begin{remark}\label{rem:RestrictionandExtensionproperty}
	Assume that $M\subseteq P$ is order dense in $P$. Then
	the embedding $f\colon M\to P$ is order continuous 
	by Corollary 
	\ref{coro:orderembeddingswithroderdenseimagesarecontinuous}, therefore the induced topology of $\tau_o(P)$ on $M$ satisfies
	\begin{equation} \label{equ:restrictionproperty}
	\{O\cap M; O\in\tau_o(P)\}\subseteq \tau_o(M).
	\end{equation}			 
	Thus for every order closed set $N\subseteq P$ 
	we obtain that $N \cap M$ is order closed in $M$. By means of Theorem \ref{thm:orderclosed} this generalises \cite[Proposition 5.1(iii)]{IaB}. Example \ref{exa:extensionprop} below shows that the converse implication in \eqref{equ:restrictionproperty} is not valid, in general. 
\end{remark}

The next statement follows from Proposition \ref{pro:o2_o3_convergenceDedekind_complete_lattice}.

\begin{proposition}
	Let $f \colon P \to Q$ be a map. \begin{itemize}
		\item[(i)] If $P$ is a Dedekind complete lattice and $f$ is $o_2$-continuous, then $f$ is also $o_3$-continuous.
		\item[(ii)] If $Q$ is a Dedekind complete lattice and $f$ is $o_3$-continuous, then $f$ is also $o_2$-continuous.
	\end{itemize}
\end{proposition}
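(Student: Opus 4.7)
The plan is to deduce both statements directly from Proposition \ref{pro:o2_o3_convergenceDedekind_complete_lattice} (which says that $o_2$- and $o_3$-convergence coincide in a Dedekind complete lattice) combined with Proposition \ref{pro:basic_convergences} (which says that $o_2$-convergence always implies $o_3$-convergence). Essentially, Dedekind completeness at one end of the map lets us convert between the two convergence types, and at the other end the generic implication already goes in the required direction.

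For part (i), I would start with a net $(x_\alpha)_{\alpha\in A}$ in $P$ and $x\in P$ with $x_\alpha\xrightarrow{o_3} x$. Since $P$ is a Dedekind complete lattice, Proposition \ref{pro:o2_o3_convergenceDedekind_complete_lattice} upgrades this to $x_\alpha\xrightarrow{o_2} x$. Applying the $o_2$-continuity of $f$ then yields $f(x_\alpha)\xrightarrow{o_2} f(x)$ in $Q$, and Proposition \ref{pro:basic_convergences}(ii) finally gives $f(x_\alpha)\xrightarrow{o_3} f(x)$. Since $(x_\alpha)_{\alpha \in A}$ and $x$ were arbitrary, $f$ is $o_3$-continuous.

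For part (ii), I would reverse the roles: take $(x_\alpha)_{\alpha\in A}$ in $P$ with $x_\alpha\xrightarrow{o_2} x$ and invoke Proposition \ref{pro:basic_convergences}(ii) to obtain $x_\alpha\xrightarrow{o_3} x$. The $o_3$-continuity of $f$ then gives $f(x_\alpha)\xrightarrow{o_3} f(x)$ in $Q$, and since $Q$ is a Dedekind complete lattice, Proposition \ref{pro:o2_o3_convergenceDedekind_complete_lattice} converts this to $f(x_\alpha)\xrightarrow{o_2} f(x)$, establishing the $o_2$-continuity of $f$.

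There is no genuine obstacle here, since both parts are immediate consequences of results already proved; the only point that requires any attention is making sure the Dedekind completeness hypothesis is applied at the correct end of the map (domain for (i), codomain for (ii)), so that the conversion between $o_2$- and $o_3$-convergence happens on the side where the nondirect implication is needed.
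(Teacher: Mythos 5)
Your argument is correct and is exactly the intended one: the paper gives no written proof beyond the remark that the statement ``follows from Proposition \ref{pro:o2_o3_convergenceDedekind_complete_lattice}'', and your two chains (convert $o_3$ to $o_2$ on the Dedekind complete side, use the hypothesis on $f$, then use the free implication $o_2\Rightarrow o_3$ from Proposition \ref{pro:basic_convergences}) are precisely the details being left to the reader. Nothing to add.
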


\begin{remark}
	(i) Note that by Remark \ref{rem:o_1_and_o_2} every $o_1$-continuous map is $o_2$-continuous. The  converse implication is not true, in general, see Example \ref{exa:ordercontnotob} below, but it is open whether it is true in partially ordered abelian groups. 
		
	(ii) In \cite[Example 1.8]{Abra} it is shown that $o_3$-continuity of maps between vector lattices does not imply $o_2$-continuity, in general. In	Corollary \ref{cor:orderboundedando2contiso3cont} below we present a setting where $o_2$-continuity implies $o_3$-continuity. It is an open question whether this implication is valid in more general situations. Moreover it is not clear under which conditions the converse implications in 
	Theorem \ref{thm:ordercontinuous}
	are true.
	
	(iii) In Theorem \ref{the:ogasawara_spaces_are_equal} we will present a situation where all concepts introduced in Definition \ref{def:ordercontinuity} coincide. 
\end{remark}

In \cite[Proposition 1.5]{Abra} it is shown that the $o_3$-convergence in a vector lattice $X$ is equivalent to the $o_2$-convergence in the Dedekind completion $X^{\delta}$ of $X$. To show that a generalisation\footnote{To link our notions with the one in \cite{Abra}, use Proposition \ref{pro:char_o_i_poag} below.} to lattices holds, we need the following technical statement.

	\begin{lemma}\label{lem:index_net}
		Let $P$ be a lattice, $Q$ a partially ordered set
		and $f\colon P \rightarrow Q$ an order embedding such that $f[P]$ is order dense in $Q$. 
		Let $(\check{y}_\alpha)_{\alpha\in A}$ be a net in $Q$ such that $\check{y}_\alpha \downarrow f(x)$ for $x \in P$. 
		If 
		\begin{align*}
		B:=\{v \in P;\,\exists \alpha \in A\colon f(v) \geq \check{y}_\alpha\}		
		\end{align*}
		is equipped with the reversed order of $P$, then $B$ is directed and
		$\inf B=x$. Thus $\check{x}_\beta:=\beta$ for all $\beta \in B$ defines a net 
		in $P$ with $\check{x}_\beta\downarrow x$.
		\end{lemma}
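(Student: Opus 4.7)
The plan is to verify directedness of $B$ under the reversed order, then to identify $x$ as $\inf B$ (with respect to the original order of $P$), and finally to observe that setting $\check{x}_\beta := \beta$ packages this information as the desired decreasing net.

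For directedness, given $v_1, v_2 \in B$, I would pick $\alpha_1, \alpha_2 \in A$ with $f(v_i) \geq \check{y}_{\alpha_i}$, and use $\check{y}_\alpha \downarrow$ to find $\alpha \geq \alpha_1, \alpha_2$ so that $\check{y}_\alpha \leq \check{y}_{\alpha_i}$. Since $P$ is a lattice, $v_1 \wedge v_2$ exists. By Proposition \ref{pro:infimum}(ii), applied to the order embedding $f$ with order dense image, $f(v_1 \wedge v_2) = \inf\{f(v_1), f(v_2)\}$ in $Q$. Because $\check{y}_\alpha$ is a common lower bound of $f(v_1)$ and $f(v_2)$, it follows that $\check{y}_\alpha \leq f(v_1\wedge v_2)$, so $v_1 \wedge v_2 \in B$ and it is a common lower bound in $P$, i.e.\ a common upper bound in $B$ with the reversed order.

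Next I would show $\inf B = x$. That $x$ is a lower bound of $B$ is immediate: for any $v \in B$ with $f(v) \geq \check{y}_\alpha$, the fact that $\check{y}_\alpha \downarrow f(x)$ gives $f(v) \geq f(x)$, and since $f$ is order reflecting we get $v \geq x$. For the reverse direction, let $z \in P$ be any lower bound of $B$; I want $z \leq x$. The key step is to show $f(z) \leq \check{y}_\alpha$ for every $\alpha \in A$, from which $f(z) \leq \inf_Q \check{y}_\alpha = f(x)$ and hence $z \leq x$ follow. Fix $\alpha \in A$ and set $B_\alpha := \{v \in P;\, f(v) \geq \check{y}_\alpha\} \subseteq B$; by order density of $f[P]$ in $Q$, $\check{y}_\alpha = \inf f[B_\alpha]$. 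For each $v \in B_\alpha$, monotony of $f$ together with $z \leq v$ gives $f(z) \leq f(v)$, so $f(z)$ is a lower bound of $f[B_\alpha]$ and thus $f(z) \leq \check{y}_\alpha$, as required.

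Finally, since $B$ is directed under the reversed order, $\check{x}_\beta := \beta$ is a net indexed by $B$; the relation $\check{x}_\beta \leq \check{x}_\gamma$ whenever $\gamma$ is above $\beta$ in the reversed order is tautological, and $\inf\{\check{x}_\beta;\, \beta \in B\} = \inf B = x$, yielding $\check{x}_\beta \downarrow x$.

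The main obstacle I anticipate is the reverse inequality in step two, where one must pass from the abstract lower bound $z$ in $P$ to a comparison in $Q$: this is exactly the step that needs the order density of $f[P]$ in $Q$, whereas the other parts only use that $f$ is an order embedding and $P$ is a lattice.
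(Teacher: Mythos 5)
Your proposal is correct and follows essentially the same route as the paper's own proof: directedness of $B$ via the lattice structure of $P$ and Proposition \ref{pro:infimum} (giving $f(v_1\wedge v_2)=f(v_1)\wedge f(v_2)\geq\check{y}_\alpha$), the lower-bound property of $x$ via order reflection, and the greatest-lower-bound property by showing $f(z)\leq\check{y}_\alpha$ for every $\alpha$ using the order density of $f[P]$ in $Q$. Your explicit introduction of the sets $B_\alpha$ with $\check{y}_\alpha=\inf f[B_\alpha]$ is just a cleaner writing of the paper's displayed chain of inclusions; the argument is the same.
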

		\begin{proof} For $v_1,v_2\in B$ there are $\alpha_1,\alpha_2\in A$ such that 
		$f(v_1) \geq \check{y}_{\alpha_1}$ and $f(v_2)\geq \check{y}_{\alpha_2}$. Since $A$ is directed there is 
		$\alpha \in A$ with $ \alpha \geq \{\alpha_1,\alpha_2\}$. We use $\check{y}_\alpha \downarrow$ and get
		$f(v_1)\geq\check{y}_{\alpha}$ and $f(v_2)\geq \check{y}_{\alpha}$. By Proposition \ref{pro:infimum} we conclude 
		$f(v_1 \wedge v_2)=f(v_1)\wedge f(v_2)\geq \check{y}_\alpha$. Thus $v_1\wedge v_2\in B$, and we have shown $B$ to be directed.\\
		It is left to show that $\inf B=x$. For $v \in B$ we have $f(v)\geq \check{y}_\alpha \geq f(x)$ for some 
		$\alpha \in A$. Since $f$
		is order reflecting we know $x$ to be a lower bound of $B$. In order to show
		that $x$ is the greatest lower bound of $B$ let $z\in P$ be another lower bound. 
		For $\alpha \in A$ the monotony of $f$ implies 
		\begin{align*}
		f(z)\leq f[B]\supseteq f[\{v \in P;\, f(v)\geq \check{y}_\alpha\}]=\{y \in f[P];\, y \geq \check{y}_\alpha\}.
		\end{align*}
		Since $f[P]$ is order dense in $Q$ we conclude $f(z)\leq \inf\{y \in f[P];\, y \geq \check{y}_\alpha\}=\check{y}_\alpha$. Thus $\check{y}_\alpha \downarrow f(x)$ yields $f(z)\leq f(x)$. Since $f$ is order reflecting we 
		conclude $z\leq x$. This proves $x$ to be the greatest lower bound of $B$.
	\end{proof}
	
	\begin{proposition}
	\label{pro:Dedekindcompletionando3o2}
	Let $Q$ be a partially ordered set and $f\colon P \to Q$ an order embedding such that $f[P]$ is order dense in $Q$.
	Let $(x_\alpha)_{\alpha \in A}$ be a net in $P$ and $x\in P$.	
	\begin{itemize}
		\item[(i)] If $Q$ is a Dedekind complete lattice, then $x_\alpha \xrightarrow{o_3}x$ implies $f(x_\alpha) \xrightarrow{o_2} f(x)$.
		\item[(ii)] If $P$ is a lattice, then $f(x_\alpha) \xrightarrow{o_2} f(x)$ implies $x_\alpha \xrightarrow{o_3}x$.
	\end{itemize}
\end{proposition}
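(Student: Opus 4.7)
For part (i), the plan is to first unpack the $o_3$-convergence hypothesis into nets $(\hat{x}_\beta)_{\beta \in B}$, $(\check{x}_\gamma)_{\gamma \in C}$ with $\hat{x}_\beta \uparrow x$, $\check{x}_\gamma \downarrow x$, and a map $\eta\colon B \times C \to A$ witnessing $\hat{x}_\beta \leq x_\alpha \leq \check{x}_\gamma$ for $\alpha \geq \eta(\beta,\gamma)$. I then push everything through $f$. The key step is that Proposition \ref{pro:infimum}(ii) (with its supremum analogue), applied to $M=\{\hat{x}_\beta\}$ and $M=\{\check{x}_\gamma\}$, converts $\hat{x}_\beta \uparrow x$ and $\check{x}_\gamma \downarrow x$ in $P$ into $f(\hat{x}_\beta) \uparrow f(x)$ and $f(\check{x}_\gamma) \downarrow f(x)$ in $Q$; here the hypothesis that $f[P]$ is order dense in $Q$ is essential. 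Monotonicity of $f$ preserves the sandwich inequalities, so the same $\eta$ witnesses $f(x_\alpha) \xrightarrow{o_3} f(x)$ in $Q$. Since $Q$ is a Dedekind complete lattice, Proposition \ref{pro:o2_o3_convergenceDedekind_complete_lattice} upgrades this to $f(x_\alpha) \xrightarrow{o_2} f(x)$.

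For part (ii), I unpack the $o_2$-convergence hypothesis: there are $(\hat{y}_\alpha)_{\alpha \in A}$, $(\check{y}_\alpha)_{\alpha \in A}$ in $Q$ and $\alpha_0 \in A$ with $\hat{y}_\alpha \uparrow f(x)$, $\check{y}_\alpha \downarrow f(x)$, and $\hat{y}_\alpha \leq f(x_\alpha) \leq \check{y}_\alpha$ for $\alpha \geq \alpha_0$. The technical core is to pull the upper and lower envelopes back from $Q$ into $P$. For the upper envelope, I apply Lemma \ref{lem:index_net} directly: it yields a directed set $B := \{v \in P;\, \exists \alpha\colon f(v)\geq \check{y}_\alpha\}$ (reversed $P$-order) and a net $\check{x}_\beta := \beta$ with $\check{x}_\beta \downarrow x$. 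For the lower envelope I use the dual of Lemma \ref{lem:index_net}, obtained by reversing the order of $P$ and $Q$ (here the fact that $P$ is a lattice is used again to get directedness via joins), giving a directed set $C := \{v \in P;\, \exists \alpha\colon f(v) \leq \hat{y}_\alpha\}$ (with the induced $P$-order) and a net $\hat{x}_\gamma := \gamma$ with $\hat{x}_\gamma \uparrow x$.

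It remains to build $\eta\colon C \times B \to A$. Given $(\gamma,\beta)$, choose $\alpha_\beta \in A$ with $f(\beta) \geq \check{y}_{\alpha_\beta}$ and $\alpha_\gamma \in A$ with $f(\gamma) \leq \hat{y}_{\alpha_\gamma}$, and let $\eta(\gamma,\beta)$ be any upper bound in $A$ of $\{\alpha_0,\alpha_\beta,\alpha_\gamma\}$. For $\alpha \geq \eta(\gamma,\beta)$, the monotonicity of $(\hat{y}_\alpha)$ and $(\check{y}_\alpha)$ together with the sandwich $\hat{y}_\alpha \leq f(x_\alpha) \leq \check{y}_\alpha$ yields $f(\hat{x}_\gamma) = f(\gamma) \leq \hat{y}_{\alpha_\gamma} \leq \hat{y}_\alpha \leq f(x_\alpha) \leq \check{y}_\alpha \leq \check{y}_{\alpha_\beta} \leq f(\beta) = f(\check{x}_\beta)$; since $f$ is order reflecting, this collapses to $\hat{x}_\gamma \leq x_\alpha \leq \check{x}_\beta$, proving $x_\alpha \xrightarrow{o_3} x$.

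The main obstacle is part (ii): turning the index set $A$ of the given $o_2$-convergence in $Q$ into entirely different index sets $B,C \subseteq P$ whose supremum/infimum really is $x$ in $P$ and not merely in $Q$. This is precisely what Lemma \ref{lem:index_net} accomplishes, so once the dual of that lemma is invoked, the rest of the argument is an essentially routine assembly of $\eta$ together with an appeal to the fact that $f$ is order reflecting.
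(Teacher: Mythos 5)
Your proposal is correct and follows essentially the same route as the paper: part (i) is the paper's argument with Corollary \ref{coro:orderembeddingswithroderdenseimagesarecontinuous} (itself resting on Proposition \ref{pro:infimum} and monotonicity) unpacked inline, followed by the same appeal to Proposition \ref{pro:o2_o3_convergenceDedekind_complete_lattice}; part (ii) is exactly the paper's construction via Lemma \ref{lem:index_net} and its order-dual, with the map $\eta$ assembled from upper bounds of the relevant indices. Your explicit tracking of $\alpha_0$ in part (ii) is in fact slightly more careful than the paper's own write-up.
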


\begin{proof}
	To show (i), let $x_\alpha \xrightarrow{o_3} x$. Corollary \ref{coro:orderembeddingswithroderdenseimagesarecontinuous} implies $f(x_\alpha) \xrightarrow{o_3}f(x)$. Thus Proposition \ref{pro:o2_o3_convergenceDedekind_complete_lattice} yields $f(x_\alpha)\xrightarrow{o_2}f(x)$. 
	
	To prove (ii), let $f(x_\alpha) \xrightarrow{o_2} f(x)$. Hence there are nets $(\hat{y}_\alpha)_{\alpha \in A}$ and $(\check{y}_\alpha)_{\alpha \in A}$ with 
	$\hat{y}_\alpha \uparrow f(x)$, $\check{y}_\alpha \downarrow f(x)$ and $\hat{y}_\alpha \leq f(x_\alpha)\leq \check{y}_\alpha$ for all $\alpha \in A$.
	Let $(\check{x}_\beta)_{\beta \in B}$ be defined as in Lemma \ref{lem:index_net} and note that $\check{x}_\beta \downarrow x$. By the definition of $B$, for $\beta \in B$ there is $\alpha_\beta \in A$ such that $f(x_\alpha) \leq \check{y}_\alpha \leq \check{y}_{\alpha_\beta}\leq f(\beta)=f(\check{x}_\beta)$ for all $\alpha \in A_{\geq \alpha_\beta}$. Since $f$ is order reflecting we obtain $x_\alpha \leq \check{x}_\beta$. An analogous construction shows the existence of a net $(\hat{x}_\gamma)_{\gamma \in C}$ with $\hat{x}_\gamma \uparrow x$ and such that for $\gamma \in C$ there exists $\alpha_\gamma\in A$ with 
	$\hat{x}_\gamma \leq x_\alpha$ for all $\alpha \in A_{\geq \alpha_\gamma}$. For $(\beta,\gamma)\in B \times C$ let $\alpha_{(\beta,\gamma)} \in A$ be 
	such that $\alpha_{(\beta,\gamma)}\geq \alpha_\beta$ and $\alpha_{(\beta,\gamma)}\geq \alpha_\gamma$. Thus $\eta\colon B \times C \rightarrow A$, $(\beta,\gamma)\mapsto\alpha_{(\beta,\gamma)}$ yields a map as in the definition of the $o_3$-convergence.
\end{proof}

Proposition \ref{pro:Dedekindcompletionando3o2} in combination with Remark \ref{rem:o1o2}(a) yields the following.

\begin{corollary}
	\label{cor:Dedekindcompletionando3o1_orderbddnets}
	Let $Q$ be a partially ordered set that is directed upward and downward, and $f\colon P \to Q$ an order embedding such that $f[P]$ is order dense in $Q$.
	Let $(x_\alpha)_{\alpha \in A}$ be a net in $P$ such that $\{f(x_\alpha);\, \alpha \in A\}$ is bounded, and let $x\in P$.	
	\begin{itemize}
		\item[(i)] If $Q$ is a Dedekind complete lattice, then $x_\alpha \xrightarrow{o_3}x$ implies $f(x_\alpha) \xrightarrow{o_1} f(x)$.
		\item[(ii)] If $P$ is a lattice, then $f(x_\alpha) \xrightarrow{o_1} f(x)$ implies $x_\alpha \xrightarrow{o_3}x$.
	\end{itemize}
\end{corollary}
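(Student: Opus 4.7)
The plan is to deduce each implication as a direct two-step chain, using Proposition \ref{pro:Dedekindcompletionando3o2} to pass through $o_2$-convergence on $f[P]$ (or on $Q$) and then using Remark \ref{rem:o1o2}(a) to upgrade $o_2$ to $o_1$ (or Proposition \ref{pro:basic_convergences}(i) to downgrade $o_1$ to $o_2$). There is no real geometric content to add; the work has already been done, and the task is just to verify that the hypotheses of the corollary match what each earlier result needs.

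For (i), I would start from $x_\alpha \xrightarrow{o_3} x$ in $P$. Since $Q$ is a Dedekind complete lattice and $f[P]$ is order dense in $Q$, Proposition \ref{pro:Dedekindcompletionando3o2}(i) applies directly and yields $f(x_\alpha) \xrightarrow{o_2} f(x)$ in $Q$. At that point the remaining assumptions of the corollary line up precisely with Remark \ref{rem:o1o2}(a): namely $Q$ is directed upward and downward, and $\{f(x_\alpha);\, \alpha \in A\}$ is bounded. Hence $o_1$- and $o_2$-convergence of this net to $f(x)$ coincide in $Q$, which gives $f(x_\alpha) \xrightarrow{o_1} f(x)$.

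For (ii), I would start from $f(x_\alpha) \xrightarrow{o_1} f(x)$ and immediately apply Proposition \ref{pro:basic_convergences}(i) to obtain $f(x_\alpha) \xrightarrow{o_2} f(x)$ for free. Since $P$ is assumed to be a lattice and $f[P]$ is order dense in $Q$, Proposition \ref{pro:Dedekindcompletionando3o2}(ii) then yields $x_\alpha \xrightarrow{o_3} x$, as required. The directedness hypothesis on $Q$ and the boundedness of $\{f(x_\alpha);\, \alpha \in A\}$ play no role in this direction; they are only needed so that the statement of the corollary is symmetric with respect to the hypotheses used in (i).

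The main thing to double-check will be the matching of hypotheses rather than any substantive argument: in particular, that Proposition \ref{pro:Dedekindcompletionando3o2} does not require $P$ itself to be directed or the net to be bounded, so that the extra structural assumptions in the corollary are used only where they are genuinely needed (Dedekind completeness for (i), lattice structure on $P$ for (ii), plus directedness and boundedness of $\{f(x_\alpha);\, \alpha \in A\}$ in $Q$ for the $o_2$-to-$o_1$ step in (i)). Since no delicate estimate is involved, I do not anticipate any serious obstacle.
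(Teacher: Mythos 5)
Your proposal is correct and is exactly the paper's argument: the paper derives this corollary by combining Proposition \ref{pro:Dedekindcompletionando3o2} with Remark \ref{rem:o1o2}(a), just as you do, and your observation that directedness and boundedness are only needed for the $o_2$-to-$o_1$ upgrade in part (i) is accurate.
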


\begin{remark} 
	Note that the implications in Proposition \ref{pro:Dedekindcompletionando3o2}(ii) and in Corollary \ref{cor:Dedekindcompletionando3o1_orderbddnets}(ii) are not valid, in general. In Example \ref{exa:extensionprop} below a partially ordered vector space $P=X$ and a vector lattice $Q=Y$ are provided which lead to a counterexample, where $f\colon P\to Q$ is the inclusion map. 
\end{remark}

One can characterise $o_3$-convergence in lattices by means of $o_3$-convergence in a cover.

\begin{proposition}
	\label{pro:o3conv_inlatticeandDedeindcmpletion}
	Let $P$ be a lattice, let $Q$ be a partially ordered set and let $f\colon P \to Q$ be an order embedding such that $f[P]$ is order dense in $Q$. Let $(x_\alpha)_{\alpha \in A}$ be a net in $P$ and $x\in P$. Then $x_\alpha \xrightarrow{o_3}x$ if and only if $f(x_\alpha)\xrightarrow{o_3} f(x)$.
\end{proposition}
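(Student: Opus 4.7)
The plan is to handle the two implications separately, with the forward one reducing to results already in hand and the reverse one being the real content.

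For the implication $x_\alpha \xrightarrow{o_3} x \Rightarrow f(x_\alpha)\xrightarrow{o_3} f(x)$, note that $f$ is an order embedding with order dense image, so by Corollary \ref{coro:orderembeddingswithroderdenseimagesarecontinuous} it is $o_3$-continuous, and the implication is immediate.

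For the converse, suppose $f(x_\alpha)\xrightarrow{o_3} f(x)$, so that there exist nets $(\hat{y}_\beta)_{\beta\in B}$ and $(\check{y}_\gamma)_{\gamma\in C}$ in $Q$ and a map $\eta\colon B\times C\to A$ with $\hat{y}_\beta\uparrow f(x)$, $\check{y}_\gamma\downarrow f(x)$, and $\hat{y}_\beta\leq f(x_\alpha)\leq \check{y}_\gamma$ for every $\alpha\in A_{\geq \eta(\beta,\gamma)}$. Apply Lemma \ref{lem:index_net} to the decreasing net $(\check{y}_\gamma)_{\gamma\in C}$: since $P$ is a lattice and $f[P]$ is order dense in $Q$, this produces a directed (downward) set $D\subseteq P$ yielding a net $(\check{x}_\delta)_{\delta\in D}$ in $P$ with $\check{x}_\delta\downarrow x$, such that for every $\delta\in D$ there is $\gamma_\delta\in C$ with $f(\check{x}_\delta)\geq \check{y}_{\gamma_\delta}$. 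The dual version of Lemma \ref{lem:index_net}, obtained by reversing the order, applies to $(\hat{y}_\beta)_{\beta\in B}$ and gives a set $E\subseteq P$ together with a net $(\hat{x}_\varepsilon)_{\varepsilon\in E}$ with $\hat{x}_\varepsilon\uparrow x$, such that for every $\varepsilon\in E$ there is $\beta_\varepsilon\in B$ with $f(\hat{x}_\varepsilon)\leq \hat{y}_{\beta_\varepsilon}$.

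To assemble the $o_3$-witness, define $\eta'\colon E\times D\to A$ by $\eta'(\varepsilon,\delta):=\eta(\beta_\varepsilon,\gamma_\delta)$. Then for each $(\varepsilon,\delta)\in E\times D$ and every $\alpha\in A_{\geq \eta'(\varepsilon,\delta)}$ the chain of inequalities
\[
f(\hat{x}_\varepsilon)\;\leq\;\hat{y}_{\beta_\varepsilon}\;\leq\; f(x_\alpha)\;\leq\; \check{y}_{\gamma_\delta}\;\leq\; f(\check{x}_\delta)
\]
holds in $Q$, so the fact that $f$ is order reflecting yields $\hat{x}_\varepsilon\leq x_\alpha\leq \check{x}_\delta$ in $P$. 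This is exactly the data required by Definition \ref{def:orderconvergences}(iii), and we conclude $x_\alpha \xrightarrow{o_3} x$.

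The main obstacle is invoking the lemma in a way that preserves the $\eta$-indexing, that is, choosing $\gamma_\delta$ and $\beta_\varepsilon$ compatibly with the definition of the index sets coming from Lemma \ref{lem:index_net}; once this is arranged, the sandwich chain propagates through $f$ and collapses to the required $o_3$-convergence in $P$ by order reflection. Note also that without the lattice assumption on $P$, Lemma \ref{lem:index_net} may fail, which is precisely where the hypothesis is used.
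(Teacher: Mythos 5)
Your proof is correct, but the reverse implication takes a genuinely different route from the paper. The paper deduces $x_\alpha \xrightarrow{o_3}x$ from $f(x_\alpha)\xrightarrow{o_3}f(x)$ by passing to the Dedekind--MacNeille completion $Q^\mu$ of $Q$: it first upgrades $o_3$-convergence in $Q$ to $o_2$-convergence in $Q^\mu$ via Proposition \ref{pro:Dedekindcompletionando3o2}(i) (which rests on Proposition \ref{pro:o2_o3_convergenceDedekind_complete_lattice}), checks via Proposition \ref{pro:orderdensitytransitive} that $J\circ f[P]$ is order dense in $Q^\mu$, and then applies Proposition \ref{pro:Dedekindcompletionando3o2}(ii) to the composite embedding. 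You instead run Lemma \ref{lem:index_net} and its order dual directly on the witnessing nets $(\check{y}_\gamma)$ and $(\hat{y}_\beta)$ of the $o_3$-convergence in $Q$, and assemble the map $\eta'(\varepsilon,\delta)=\eta(\beta_\varepsilon,\gamma_\delta)$ by hand; order reflection then collapses the sandwich to the required data of Definition \ref{def:orderconvergences}(iii). In effect you have redone the proof of Proposition \ref{pro:Dedekindcompletionando3o2}(ii) in the $o_3$-setting with two separate index sets. Your route is more elementary and self-contained: it avoids the completion and the $o_2$/$o_3$ equivalence in Dedekind complete lattices entirely, at the cost of repeating a construction the paper prefers to reuse as a black box. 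Two small points worth making explicit if you write this up: the index sets $D$ and $E$ produced by the lemma and its dual are non-empty because $f[P]$, being order dense, is majorising (and, dually, minorising) in $Q$; and the selections $\delta\mapsto\gamma_\delta$, $\varepsilon\mapsto\beta_\varepsilon$ are just choices guaranteed by the defining condition of $D$ and $E$, exactly as in the paper's own use of the lemma.
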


\begin{proof}
	If $x_\alpha \xrightarrow{o_3}x$, then $f(x_\alpha) \xrightarrow{o_3}f(x)$ in $f[P]$, hence also in $Q$. To show the converse implication, let $Q^\mu$ be the Dedekind-MacNeille completion\footnote{If $Q$ is a partially ordered set, then there is a complete lattice $Q^\mu$ and an order embedding $J\colon Q \to Q^\mu$ such that $J[Q]$ is order dense in $Q^\mu$. The set $Q^\mu$ is called \emph{Dedekind-MacNeille completion} of $Q$.} and $J\colon Q\to Q^\mu$ the canonical embedding. If $f(x_\alpha) \xrightarrow{o_3}f(x)$ in $Q$, then Proposition \ref{pro:Dedekindcompletionando3o2}(i) shows $J(f(x_\alpha))\xrightarrow{o_2} J(f(x))$. Since $J\circ f[P]$ is order dense in $J[Q]$ and $J[Q]$ is order dense in $Q^\mu$, by Proposition \ref{pro:orderdensitytransitive} we conclude $J\circ f[P]$ to be order dense in $Q^\mu$. Note furthermore that $J\circ f\colon P \to Q^\mu$ is an order embedding. 
	Hence Proposition \ref{pro:Dedekindcompletionando3o2}(ii) shows $x_\alpha \xrightarrow{o_3}x$.
\end{proof}

\begin{remark}
	In \cite[Example 1.4]{Abra} an example of a vector lattice $X$ and a net $(x_\alpha)_{\alpha \in A}$ with $\{x_\alpha;\, \alpha \in A\}$ bounded is given that $o_3$-convergences, but does not $o_2$-converge. Hence by Proposition \ref{pro:basic_convergences} the net $(x_\alpha)_{\alpha \in A}$ does not $o_1$-converge. Since $(x_\alpha)_{\alpha \in A}$ is $o_3$-convergent in $X$ and $\{x_\alpha;\, \alpha \in A\}$ is bounded, Corollary \ref{cor:Dedekindcompletionando3o1_orderbddnets} implies $(x_\alpha)_{\alpha \in A}$ to be $o_1$-convergent in $X^\delta$, and hence $o_2$-convergent in $X^\delta$.
	Thus an analogue of Proposition \ref{pro:o3conv_inlatticeandDedeindcmpletion} for  $o_1$-convergence and  $o_2$-convergence is not valid.	
	
	In Proposition \ref{pro:o3conv_inlatticeandDedeindcmpletion} the statement is not valid for arbitrary partially ordered sets $P$. Indeed, in Example \ref{exa:extensionprop} below we will present a partially ordered vector space $P=X$, a vector lattice $Q=Y$, and a net $(x_\alpha)_{\alpha \in A}$ in $P$ such that for the canonical embedding $f\colon P \to Q$ we have that $f(x_\alpha) \xrightarrow{o_3}f(x)$, but $(x_\alpha)_{\alpha \in A}$ does not $o_3$-converge.
\end{remark}

Next we discuss the link between $o_1$-continuity and order boundedness. The proof of the subsequent proposition is adopted from \cite[Proposition 149]{Mali2017}. 

\begin{proposition}\label{pro:o1ob}
Every $o_1$-continuous map  $f\colon P\to Q$ is order bounded.
\end{proposition}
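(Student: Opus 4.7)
The plan is to show that for every order bounded $S\subseteq P$, say $S\subseteq[a,b]$, the image $f[[a,b]]$ (which contains $f[S]$) is order bounded in $Q$. I will extract an upper and a lower bound for $f[[a,b]]$ by feeding two canonical $o_1$-convergent nets built out of $[a,b]$ itself into the $o_1$-continuity of $f$.

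For the upper bound I would take the index set $A:=[a,b]$ with the order induced from $P$: since $b$ is a common upper bound for any pair in $A$, the set $A$ is directed. The identity assignment $x_\alpha:=\alpha$ then satisfies $x_\alpha \xrightarrow{o_1} b$, as one sees by choosing $\hat{x}_\alpha:=\alpha$ (monotone in $A$ with supremum $b$ in $P$, since $b=\max A$) and $\check{x}_\alpha:=b$ (constant). Applying $o_1$-continuity and Definition \ref{def:orderconvergences}(i) produces a net $(\check{y}_\alpha)_{\alpha\in A}$ in $Q$ with $\check{y}_\alpha \downarrow f(b)$ and $\check{y}_\alpha \geq f(x_\alpha)=f(\alpha)$ for every $\alpha\in A$. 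Because $a$ is the minimum of $A$ and $(\check{y}_\alpha)$ is decreasing, I obtain $\check{y}_a \geq \check{y}_\alpha \geq f(\alpha)$ for all $\alpha\in[a,b]$, so $\check{y}_a$ is an upper bound of $f[[a,b]]$.

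For the lower bound I would repeat the construction with $A:=[a,b]$ endowed with the reversed order of $P$; now $a$ is the maximum of $A$ and $b$ its minimum. The identity net $x_\alpha:=\alpha$ $o_1$-converges to $a$ (take $\check{x}_\alpha:=\alpha$ and $\hat{x}_\alpha:=a$ constant), so $o_1$-continuity yields an increasing net $(\hat{y}_\alpha)_{\alpha\in A}$ with $\hat{y}_\alpha \uparrow f(a)$ and $\hat{y}_\alpha \leq f(\alpha)$. Evaluating at the minimum $b$ of $A$ gives $\hat{y}_b \leq \hat{y}_\alpha \leq f(\alpha)$ for all $\alpha\in[a,b]$, producing the desired lower bound of $f[[a,b]]$.

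The only conceptual ingredient is the observation that the identity net on $[a,b]$ is already $o_1$-convergent, to $b$ in the natural order and to $a$ in the reversed one; no lattice or linear structure on $P$ or $Q$ is required, which matches the generality of the statement. The point to keep in mind is that $o_1$-convergent nets need not be globally order bounded, so one cannot simply read off bounds from the mere existence of an $o_1$-limit; rather, one has to exploit the explicit sandwiching nets $(\hat{y}_\alpha)$ and $(\check{y}_\alpha)$ at the extremal indices of the chosen $A$.
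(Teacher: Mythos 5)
Your proof is correct and follows essentially the same route as the paper: take the identity net on the order interval $[a,b]$, observe that it is $o_1$-convergent, and read off bounds for $f[[a,b]]$ from the sandwiching nets evaluated at the extremal index. The only (harmless) difference is that you run the construction twice, once for each bound, whereas the paper notes that the single net $x_\alpha\uparrow b$ already yields \emph{both} nets $\hat{y}_\alpha\le f(x_\alpha)\le\check{y}_\alpha$, so that $f[[a,b]]\subseteq[\hat{y}_a,\check{y}_a]$ in one step.
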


\begin{proof}
	Let $A:=[v,w]$ be an order interval in $P$ and consider the net $(x_{\alpha})_{\alpha\in A}$ with $x_\alpha:=\alpha$. Note that $x_\alpha\uparrow w$, therefore $x_\alpha\xrightarrow{o_1}w$. Thus $f(x_\alpha)\xrightarrow{o_1}f(w)$, hence there are nets $(\hat{y}_\alpha)_{\alpha\in A}$ and $(\check{y}_\alpha)_{\alpha\in A}$ such that $\hat{y}_\alpha\uparrow f(w)$, $\check{y}_\alpha\downarrow f(w)$ and $\hat{y}_\alpha\leq  f(x_\alpha)\leq \check{y}_\alpha$ for every $\alpha\in A$. Consequently $f\left[[v,w]\right]\subseteq [\hat{y}_v, \check{y}_v]$. 
\end{proof}
	
The subsequent simple example 
shows that $o_2$-, $o_3$-, and order continuity do not imply order boundedness, in general. 	
	
	\begin{example}
		\label{exa:ordercontnotob}
		Consider the partially ordered set $P:=\mathbb{R}\setminus\{0\}$
		with the standard order and the map $f\colon P\to P$, $x\mapsto \frac{1}{x^2}$. Clearly, $f$ is not order bounded and, hence, not $o_1$-continuous due to Proposition \ref{pro:o1ob}. Since $f$ is continuous with respect to the standard topology of $P$, Example \ref{exa:opensubsetsofR} yields that $f$ is $o_2$-continuous, $o_3$-continuous and order continuous. 
	\end{example}

\section{Order convergence and order topology in partially ordered abelian groups} 
Let $G$ be a partially ordered abelian group. In this section, we characterise net catching sets  as well as the three concepts of order convergence in partially ordered abelian groups. 
 
\begin{proposition}\label{pro:char_netcatchingsetImhoff}
Let $U\subseteq G$ and $x\in U$.
\begin{itemize}
	\item[(i)] $U$ is a net catching set for $0$ if and only if for every net $(x_\alpha)_{\alpha\in A}$ in $G$ with $x_\alpha\downarrow 0$ there is $\alpha\in A$ such that $[-x_\alpha,x_\alpha]\subseteq U$.
	\item[(ii)] $U$ is a net catching set for $x$ if and only if $U-x$ is a net catching set for $0$.
\end{itemize}   	
\end{proposition}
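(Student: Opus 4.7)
The plan is to exploit the translation structure of $G$ together with the fact that $a \le b$ in $G$ is equivalent to $-b \le -a$. Part (ii) is a direct translation argument, while part (i) requires constructing a symmetric decreasing net from an arbitrary pair of monotone nets converging to $0$.

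For (ii), I would observe that $y \mapsto y-x$ is an order isomorphism of $G$ onto itself that sends $x$ to $0$. Given nets $\hat{y}_\alpha \uparrow 0$ and $\check{y}_\alpha \downarrow 0$, the shifted nets $\hat{x}_\alpha := \hat{y}_\alpha + x$ and $\check{x}_\alpha := \check{y}_\alpha + x$ satisfy $\hat{x}_\alpha \uparrow x$ and $\check{x}_\alpha \downarrow x$, and $[\hat{x}_\alpha,\check{x}_\alpha] \subseteq U$ if and only if $[\hat{y}_\alpha,\check{y}_\alpha] \subseteq U-x$. This yields one implication of (ii); the other follows by applying the same translation trick in reverse to nets going to $x$.

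For the forward direction of (i), given $(x_\alpha)_{\alpha \in A}$ with $x_\alpha \downarrow 0$, I would set $\hat{x}_\alpha := -x_\alpha$ and $\check{x}_\alpha := x_\alpha$. Then $\hat{x}_\alpha \uparrow 0$ and $\check{x}_\alpha \downarrow 0$, so the net catching property of $U$ directly supplies $\alpha$ with $[-x_\alpha,x_\alpha] = [\hat{x}_\alpha,\check{x}_\alpha] \subseteq U$. For the reverse direction, I would take arbitrary nets $\hat{x}_\alpha \uparrow 0$ and $\check{x}_\alpha \downarrow 0$ over a common index set $A$ (as required by the definition) and define $z_\alpha := \check{x}_\alpha - \hat{x}_\alpha$. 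Since $\hat{x}_\alpha \le 0 \le \check{x}_\alpha$ we have $z_\alpha \ge 0$, and monotonicity $z_\alpha \downarrow$ is immediate from translation invariance of $\le$.

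The main obstacle is verifying $\inf_\alpha z_\alpha = 0$, which is where the assumption that $G$ is an abelian group is genuinely used. For this, let $l$ be a lower bound of $\{z_\alpha;\alpha\in A\}$ and let $\alpha,\beta \in A$ be arbitrary; choosing $\gamma \in A$ with $\gamma \ge \alpha,\beta$ gives $l \le z_\gamma \le \check{x}_\beta - \hat{x}_\alpha$, so $l + \hat{x}_\alpha \le \check{x}_\beta$. Fixing $\alpha$ and taking the infimum over $\beta$ (using $\check{x}_\beta \downarrow 0$) yields $l + \hat{x}_\alpha \le 0$, i.e.\ $l \le -\hat{x}_\alpha$; then taking the infimum over $\alpha$ (using $-\hat{x}_\alpha \downarrow 0$) gives $l \le 0$. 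Once $z_\alpha \downarrow 0$ is established, the hypothesis of (i) produces $\alpha$ with $[-z_\alpha,z_\alpha] \subseteq U$, and since $\hat{x}_\alpha \le 0 \le \check{x}_\alpha$ one sees $[\hat{x}_\alpha,\check{x}_\alpha] \subseteq [-z_\alpha,z_\alpha] \subseteq U$, completing the argument.
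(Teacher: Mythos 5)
Your proof is correct and follows essentially the same route as the paper: the forward direction of (i) uses the symmetric nets $\pm x_\alpha$, the reverse direction passes to $\check{x}_\alpha-\hat{x}_\alpha\downarrow 0$ and the inclusion $[\hat{x}_\alpha,\check{x}_\alpha]\subseteq[-(\check{x}_\alpha-\hat{x}_\alpha),\check{x}_\alpha-\hat{x}_\alpha]$, and (ii) is the same translation argument. The only difference is that you spell out the verification that $\check{x}_\alpha-\hat{x}_\alpha\downarrow 0$, which the paper states without proof.
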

\begin{proof}
	(i) Let $U$ be a net catching set for $0$. If $(x_\alpha)_{\alpha\in A}$ is a net in $G$ with $x_\alpha\downarrow 0$, then $-x_\alpha\uparrow 0$, hence $[-x_\alpha,x_\alpha]\subseteq U$.
		
	For the converse implication, 
	we have to show that  $U$ is a net catching set for $0$. Let 
	$(\hat{x}_\alpha)_{\alpha\in A}$ and $(\check{x}_\alpha)_{\alpha\in A}$ be nets in $G$
	with $\hat{x}_\alpha \uparrow 0$ and $\check{x}_\alpha\downarrow 0$. 
	Thus $(\check{x}_\alpha-\hat{x}_\alpha)\downarrow 0$. By the assumption there is 
	$\alpha \in A$ such that $[\hat{x}_\alpha,\check{x}_\alpha]\subseteq[-(\check{x}_\alpha-\hat{x}_\alpha), \check{x}_\alpha-\hat{x}_\alpha]\subseteq U$.	
	
	The result in (ii) follows from the fact that $x_\alpha\downarrow x$ if and only if $x_\alpha-x\downarrow 0$ (and the similar statement for increasing nets). 
\end{proof}

\begin{remark}	
	In the case of a partially ordered vector spaces, the concept of O-neighbourhood is introduced in \cite[Definition 3.3]{Imh}. Proposition  \ref{pro:char_netcatchingsetImhoff} shows that O-neighbourhoods are exactly the net catching sets.
\end{remark}

\begin{remark}\label{rem:G+-G+orderopen}
	\begin{itemize}
		\item[(a)] The set $G_+$ is order closed, due to Corollary \ref{cor:upperboundset_orderclosed}.\item[(b)] The set $G_+-G_+$ is order closed. Indeed, by Theorem \ref{thm:orderclosed} it is sufficient to show that $G_+-G_+$ is closed under $o_1$-convergence. Let $(x_\alpha)_{\alpha \in A}$ be a net in $G_+-G_+$ such that $x_\alpha \xrightarrow{o_1}x\in G$. Then there are nets $(\hat{x}_\alpha)_{\alpha\in A}$ and $(\check{x}_\alpha)_{\alpha \in A}$ such that $\hat{x}_\alpha \uparrow x$, $\check{x}_\alpha \downarrow x$ and $\hat{x}_\alpha\leq x_\alpha \leq \check{x}_\alpha$ for every $\alpha \in A$. Thus for every $\alpha \in A$ we obtain $x\in G_+ + \hat{x}_\alpha \subseteq G_+ +(x_\alpha-G_+) \subseteq G_+ + ((G_+-G_+) -G_+)=G_+-G_+$.
		
		\item[(c)] The set $G_+-G_+$ is order open. Indeed, by 
		Proposition \ref{pro:char_netcatchingsetImhoff} (ii)
		it is sufficient to show that   $G_+-G_+$ is a net-catching set for $0$. Let 
		$(x_\alpha)_{\alpha\in A}$ be a net in $G$ with $x_\alpha\downarrow 0$, 
		then for every $\alpha \in A$ we have
		$[-x_\alpha,x_\alpha]\subseteq x_\alpha-G_+ \subseteq G_+-G_+$.
	\end{itemize} 	
\end{remark}

 Note that for nets $(x_\alpha)_{\alpha\in A}$  and $(y_\beta)_{\beta \in B}$ in $G$ with $x_\alpha\downarrow x\in G$ and $y_\beta\downarrow y\in G$ the net $(x_\alpha+y_\beta)_{(\alpha,\beta)\in A\times B}$ satisfies $x_\alpha+y_\beta\downarrow x+y$, where $A\times B$ is ordered component-wise.
 This yields the following statement.
 
 \begin{proposition} \label{pro:plus_o_i}
 	Let $G$ be a partially ordered abelian group and let $(x_\alpha)_{\alpha\in A}$ and $(y_\beta)_{\beta\in B}$ be nets in $G$. Let $A\times B$ be ordered component-wise and let $i\in\{1,2,3\}$. If $x_\alpha\xrightarrow{o_i} x\in G$ and $y_\beta\xrightarrow{o_i} y\in G$, then the net $(x_\alpha+ y_\beta)_{(\alpha,\beta)\in A\times B}$ satisfies  $x_\alpha+ y_\beta\xrightarrow{o_i} x+ y$. 
 \end{proposition}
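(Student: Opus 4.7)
The plan is to handle the three cases $i \in \{1,2,3\}$ uniformly by lifting the sandwiching nets from the separate index sets to their componentwise product. The hint immediately preceding the statement provides the decreasing case of a basic fact about sums of monotone nets, and its upward analogue (obtained by passing to $-\hat{x}_\alpha \downarrow -x$ etc.\ and using that $G$ is a partially ordered abelian group) gives: if $\hat{x}_\alpha \uparrow x$ and $\hat{y}_\beta \uparrow y$, then $\hat{x}_\alpha + \hat{y}_\beta \uparrow x + y$ on the componentwise ordered product $A \times B$. This is the sole nontrivial ingredient.

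For $i = 1$, I would pick sandwiching nets $(\hat{x}_\alpha)_{\alpha \in A}$, $(\check{x}_\alpha)_{\alpha \in A}$, $(\hat{y}_\beta)_{\beta \in B}$, $(\check{y}_\beta)_{\beta \in B}$ witnessing the two $o_1$-convergences, and set $\hat{z}_{(\alpha,\beta)} := \hat{x}_\alpha + \hat{y}_\beta$ and $\check{z}_{(\alpha,\beta)} := \check{x}_\alpha + \check{y}_\beta$. The hint gives $\check{z}_{(\alpha,\beta)} \downarrow x+y$, its upward analogue gives $\hat{z}_{(\alpha,\beta)} \uparrow x+y$, and the sandwich inequality $\hat{z}_{(\alpha,\beta)} \leq x_\alpha + y_\beta \leq \check{z}_{(\alpha,\beta)}$ is immediate from adding $\hat{x}_\alpha \leq x_\alpha \leq \check{x}_\alpha$ and $\hat{y}_\beta \leq y_\beta \leq \check{y}_\beta$ by translation invariance of the order. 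This gives $x_\alpha + y_\beta \xrightarrow{o_1} x+y$ directly. The $i=2$ case is identical up to taking the product $(\alpha_0,\beta_0)$ of the two tail indices as the new tail, from which the componentwise order guarantees both individual tails are entered.

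The $i = 3$ case requires one additional layer of indexing, since the witnessing nets for $x_\alpha \xrightarrow{o_3} x$ live on separate sets $C$ (upward) and $D$ (downward) linked by a map $\eta_x \colon C \times D \to A$, and analogously $E$, $F$, $\eta_y \colon E \times F \to B$ for $y_\beta \xrightarrow{o_3} y$. I would then form $\hat{z}_{(\gamma,\varepsilon)} := \hat{x}_\gamma + \hat{y}_\varepsilon$ on $C \times E$ and $\check{z}_{(\delta,\varphi)} := \check{x}_\delta + \check{y}_\varphi$ on $D \times F$, and assemble the control map $\tilde{\eta} \colon (C \times E) \times (D \times F) \to A \times B$ by $\tilde{\eta}((\gamma,\varepsilon),(\delta,\varphi)) := (\eta_x(\gamma,\delta), \eta_y(\varepsilon,\varphi))$. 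For any $(\alpha,\beta)$ componentwise above $\tilde{\eta}((\gamma,\varepsilon),(\delta,\varphi))$, both individual tail conditions are satisfied, and adding the two sandwiches delivers $\hat{z}_{(\gamma,\varepsilon)} \leq x_\alpha + y_\beta \leq \check{z}_{(\delta,\varphi)}$.

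There is no genuine obstacle here; the argument is essentially bookkeeping. The one place worth a small amount of care is the construction of $\tilde{\eta}$ in the $i = 3$ case, which must be defined so that its two coordinates feed the correct coordinates of the two given control maps $\eta_x$ and $\eta_y$.
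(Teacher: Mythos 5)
Your proposal is correct and follows exactly the route the paper intends: the paper's entire argument is the remark preceding the statement (that $x_\alpha\downarrow x$ and $y_\beta\downarrow y$ give $x_\alpha+y_\beta\downarrow x+y$ on the componentwise ordered product), and your proof simply spells out the bookkeeping that the paper leaves implicit, including the correct assembly of the control map in the $o_3$ case.
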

 
 \begin{remark}\label{rem:t_o_not_linear}
 	Due to Remark \ref{rem:Floyd_sigma_comp} the order topology is $\operatorname{T}_1 $ and $\sigma$-compatible, hence the assumptions in \cite[Theorem 3]{Floyd1955} are satisfied. Since the map $G \to G\colon g\mapsto -g$ is order continuous for every partially ordered abelian group $G$, by \cite[Corollary]{Floyd1955}
 	there is a 
 	Dedekind complete vector lattice $X$ endowed with the order topology with the property that the addition $X\times X\to X$, $(x,y)\mapsto x+y$ is not continuous, where $X\times X$ is equipped  with the product topology.
 	
 	As the order bound topology introduced in \cite[p.\ 20]{Namioka57} is always a linear topology, this shows that $\tau_o$ does not coincide with the order bound topology of $X$.
 \end{remark}

  The order convergences in vector lattices investigated in 
  \cite{Abra} are special cases of the $o_i$-convergences, as the next proposition shows. 

\begin{proposition} \label{pro:char_o_i_poag}
	Let 
	$(x_\alpha)_{\alpha \in A}$ be a net in $G$. Then 
	\begin{itemize}
		\item[(i)]
		$x_\alpha \xrightarrow{o_1} 0$  
		if and only if there is a net 
		$(\check{x}_\alpha)_{\alpha \in A}$ in $G$ such that $\check{x}_\alpha \downarrow 0$ and 
		$\pm x_\alpha \leq \check{x}_\alpha$ for every 
		$\alpha \in A$,
		\item[(ii)] 
		$x_\alpha \xrightarrow{o_2} 0$ 
		if and only if there is a net  
		$(\check{x}_\alpha)_{\alpha \in A}$ in $G$ and $\alpha_0 \in A$ such that $\check{x}_\alpha \downarrow 0$ and 
		$\pm x_\alpha \leq \check{x}_\alpha$ for every 
		$\alpha \in A_{\geq \alpha_0}$,
		\item[(iii)] 
		$x_\alpha \xrightarrow{o_3} 0$, 
		if and only if there is a net  $(\check{x}_\beta)_{\beta \in B}$
		and 
		a map 
		$\eta\colon B \rightarrow A$ such that $\check{x}_\beta \downarrow 0$ and
		$\pm x_\alpha \leq \check{x}_\beta$ for every 
		$\beta\in B$ and $\alpha \in A_{\geq \eta(\beta)}$,
		\item[(iv)] for every $i\in \{1,2,3\}$ and $x\in G$ we have that $x_\alpha\xrightarrow{o_i}x$ if and only if $x_\alpha-x\xrightarrow{o_i}0$. 
	\end{itemize}
\end{proposition}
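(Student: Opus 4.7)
The plan is to prove (iv) first and then use translation invariance to reduce (i), (ii), (iii) to the case $x=0$. For (iv), note that for any net $(\hat{x}_\alpha)_{\alpha\in A}$ in $G$ and $x\in G$ one has $\hat{x}_\alpha\uparrow x$ if and only if $\hat{x}_\alpha-x\uparrow 0$, and analogously for decreasing nets. Inserting this into the three definitions in \ref{def:orderconvergences} (and also into the characterisation in \ref{pro:char_o3conv}(ii) for the $o_3$-case) gives (iv) immediately. Thus it suffices to prove the three characterisations for $x=0$.

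The key observation for (i) is the following: if $\check{y}_\alpha\downarrow 0$ and $\hat{x}_\alpha\uparrow 0$, then the net $\check{y}_\alpha-\hat{x}_\alpha$ decreases monotonically to $0$. Monotonicity is clear from translation invariance; that the infimum is $0$ follows from a short argument: if $l\le\check{y}_\alpha-\hat{x}_\alpha$ for all $\alpha$, then for any fixed $\beta$ and any $\alpha\ge\beta$ we have $l-\check{y}_\beta\le -\hat{x}_\alpha$, hence $l-\check{y}_\beta\le\inf_{\alpha\ge\beta}(-\hat{x}_\alpha)=0$, so $l\le\check{y}_\beta$ for all $\beta$ and consequently $l\le 0$. (Alternatively, one can invoke \ref{pro:plus_o_i} to the effect that the sum of two nets converging monotonically to $0$ converges to $0$.)

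Using this, the forward direction of (i) is: given $\hat{x}_\alpha\uparrow 0$, $\check{y}_\alpha\downarrow 0$ with $\hat{x}_\alpha\le x_\alpha\le\check{y}_\alpha$, set $\check{x}_\alpha:=\check{y}_\alpha-\hat{x}_\alpha$. Then $\check{x}_\alpha\downarrow 0$ and $x_\alpha\le\check{y}_\alpha\le\check{x}_\alpha$ as well as $-x_\alpha\le-\hat{x}_\alpha\le\check{x}_\alpha$. Conversely, given $\check{x}_\alpha\downarrow 0$ with $\pm x_\alpha\le\check{x}_\alpha$, set $\hat{x}_\alpha:=-\check{x}_\alpha$; then $\hat{x}_\alpha\uparrow 0$ and $\hat{x}_\alpha\le x_\alpha\le\check{x}_\alpha$, giving $x_\alpha\xrightarrow{o_1}0$. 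Part (ii) is handled by the same construction, only now the inequalities and the bound $\alpha_0$ need only be assumed for $\alpha\ge\alpha_0$. For (iii), I would first apply \ref{pro:char_o3conv}(ii) to obtain nets $(\hat{x}_\beta)_{\beta\in B}$ and $(\check{y}_\beta)_{\beta\in B}$ on a common index set and a single map $\eta\colon B\to A$ such that $\hat{x}_\beta\uparrow 0$, $\check{y}_\beta\downarrow 0$, and $\hat{x}_\beta\le x_\alpha\le\check{y}_\beta$ for $\alpha\in A_{\ge\eta(\beta)}$. Setting $\check{x}_\beta:=\check{y}_\beta-\hat{x}_\beta$ and using the same computation yields one direction of (iii); the converse direction again uses $\hat{x}_\beta:=-\check{x}_\beta$.

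The only nontrivial step is the verification that $\check{y}_\alpha-\hat{x}_\alpha\downarrow 0$ (respectively the analogous statement on the index set $B$). After that, the proof is bookkeeping. I expect the main obstacle to be purely notational: keeping straight which index set, tail condition, and map $\eta$ is in use in each of the three parallel cases so that the transformation $\check{x}:=\check{y}-\hat{x}$ (and its inverse $\hat{x}:=-\check{x}$) clearly produces witnesses of the required form.
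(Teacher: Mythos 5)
Your proposal is correct and follows essentially the same route as the paper: the paper also passes to the common-index-set form of $o_3$-convergence via Proposition \ref{pro:char_o3conv}, sets $\check{x}:=\check{y}-\hat{y}$ with the observation that this net decreases to $0$, recovers the converse via $\hat{x}:=-\check{x}$, and obtains (iv) from Proposition \ref{pro:plus_o_i}. The only cosmetic differences are that you prove (iv) first and spell out the verification of $\check{y}_\alpha-\hat{x}_\alpha\downarrow 0$, which the paper leaves as an observation.
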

\begin{proof}
	We show (iii), observe that (i) and (ii) are similar.
	Let $x_\alpha \xrightarrow{o_3} 0$. Then  
	Proposition \ref{pro:char_o3conv} yields the existence of nets   $(\hat{y}_\beta)_{\beta\in B}$ and $(\check{y}_\beta)_{\beta\in B}$ and a map $\eta\colon B\to A$ such that $\hat{y}_\beta\uparrow 0$,  $\check{y}_\beta\downarrow 0$ and $\hat{y}_\beta\leq x_\alpha\leq \check{y}_\beta$ for every $\beta\in B$ and $\alpha\in A_{\geq\eta(\beta)}$. For $\beta\in B$ define $\check{x}_\beta:=\check{y}_\beta-\hat{y}_\beta$. Observe that $\check{y}_\beta-\hat{y}_\beta\downarrow 0$. Furthermore $-\check{x}_\beta\leq \hat{y}_\beta \leq x_\alpha\leq \check{y}_\beta\leq \check{x}_\beta$ holds for all $\beta \in B$ and $\alpha \in A_{\geq \eta(\beta)}$. The converse implication in (iii) is straightforward. The statement in (iv) is a direct consequence of Proposition \ref{pro:plus_o_i}.
\end{proof} 

Order closed subgroups of lattice-ordered abelian groups are characterised as follows. 

\begin{proposition}\label{pro:McapG+oclosed}
	Let $M$ be a subgroup of a lattice-ordered abelian group $G$ such that $M$ is closed under the lattice operations of $G$ (i.e.\ for every $x,y\in M$ the element $x\vee y\in G$ belongs to $M$).	
	 Then $M$ is order closed if and only if $M\cap G_+$ is order closed.
\end{proposition}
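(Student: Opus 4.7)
The plan is to exploit the decomposition $x = x^+ - x^-$ valid in every lattice-ordered abelian group in order to reduce order closedness of $M$ to that of $M \cap G_+$.

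For the easy direction, if $M$ is order closed, then $M \cap G_+$ is the intersection of $M$ with the order closed set $G_+$ (Remark \ref{rem:G+-G+orderopen}(a)), hence order closed.

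For the converse, I would use Theorem \ref{thm:orderclosed} and verify that every $o_1$-limit of a net in $M$ lies in $M$. So let $(x_\alpha)_{\alpha \in A}$ be a net in $M$ with $x_\alpha \xrightarrow{o_1} x \in G$, and fix witnesses $(\hat{x}_\alpha)_{\alpha \in A}$, $(\check{x}_\alpha)_{\alpha \in A}$ in $G$ with $\hat{x}_\alpha \uparrow x$, $\check{x}_\alpha \downarrow x$ and $\hat{x}_\alpha \leq x_\alpha \leq \check{x}_\alpha$. Joining with $0$ and invoking the infinite distributive laws, which hold in every lattice-ordered abelian group, one has $\hat{x}_\alpha \vee 0 \uparrow x \vee 0$ and $\check{x}_\alpha \vee 0 \downarrow x \vee 0$, while obviously $\hat{x}_\alpha \vee 0 \leq x_\alpha \vee 0 \leq \check{x}_\alpha \vee 0$. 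Hence $x_\alpha \vee 0 \xrightarrow{o_1} x \vee 0$. Since $M$ is a subgroup (so contains $0$) and is closed under $\vee$, each $x_\alpha \vee 0$ lies in $M \cap G_+$, and by assumption the limit $x^+ := x \vee 0$ lies in $M \cap G_+ \subseteq M$.

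Applying the analogous argument to $(-x_\alpha)_{\alpha \in A}$, which $o_1$-converges to $-x$ by Proposition \ref{pro:plus_o_i}, gives $x^- = (-x) \vee 0 \in M$. Since $x = x^+ - x^-$ in every lattice-ordered abelian group and $M$ is a subgroup, we conclude $x \in M$, as required. I do not expect a real obstacle here: the only subtle point is the use of the infinite distributive laws to push the lattice operation $\cdot \vee 0$ through the monotone witnesses, and this is precisely what ensures that $o_1$-convergence in $G$ is preserved under joining with a constant.
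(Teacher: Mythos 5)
Your proposal is correct and follows essentially the same route as the paper: the easy direction via intersection with the order closed set $G_+$, and the converse via Theorem \ref{thm:orderclosed} by showing $x_\alpha^{\pm}\xrightarrow{o_1}x^{\pm}$ and writing $x=x^+-x^-$. The only cosmetic difference is that you inline the special case of Proposition \ref{pro:inf_o_i} (joining a convergent net with the constant $0$, using the infinite distributive laws), whereas the paper simply cites that proposition.
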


\begin{proof}
	Let $M$ be order closed. Since $G_+$ is order closed, we obtain that $M\cap G_+$ is order closed.
	
	For the converse implication, we use Theorem \ref{thm:orderclosed}. Let $(x_\alpha)_{\alpha\in A}$ be a net in $M$ with $x_\alpha\xrightarrow{o_1}x\in G$. 
	By Proposition \ref{pro:inf_o_i} we obtain  $x_\alpha^+\xrightarrow{o_1}x^+$ and
	$x_\alpha^-\xrightarrow{o_1}x^-$. Since $x_\alpha^+, x_\alpha^-\in M\cap G_+$, we conclude $x=x^+-x^-\in M$.
\end{proof}

\section{The Riesz-Kantorovich formulas for group homomorphisms}
In this section we study conditions on partially ordered abelian groups $G$ and $H$ 
such that the set  $\operatorname{A}_{\operatorname{b}}(G,H)$ of all order bounded additive maps turns out to be a lattice-ordered abelian group. The arguments are straightforward adaptations of the classical Riesz-Kantorovich theorem, see \cite{Riesz30} and \cite{Kan1940}. We include the proofs here for sake of completeness. 

\begin{proposition}\label{pro:Kantorovich}
	Let $G$ and $H$ be partially ordered abelian groups such that $G$ is directed. Let $f\colon G_+\to H$ be a semigroup homomorphism. Then there exists a unique additive map $g\colon G\to H$ such that $f=g$ on $G_+$. Moreover, if $f[G_+]\subseteq H_+$, then $g$ is monotone.
	\end{proposition}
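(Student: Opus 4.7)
The plan is to mimic the classical Riesz--Kantorovich extension argument, adapted to the setting of semigroup homomorphisms on the positive cone of a directed partially ordered abelian group.

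First, I would exploit directedness: since $G$ is directed, $G_+$ is generating, so every $x \in G$ admits a representation $x = a - b$ with $a,b \in G_+$. I then define $g\colon G \to H$ by $g(x) := f(a) - f(b)$ for any such representation. Before anything else, I would record that $f(0) = 0$, which follows from $f(0) = f(0+0) = f(0) + f(0)$ in $H$; this ensures that for $x \in G_+$ the representation $x = x - 0$ gives $g(x) = f(x)$, so $g$ indeed extends $f$.

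The main technical step, and really the only nontrivial verification, is well-definedness. Suppose $x = a - b = c - d$ with $a,b,c,d \in G_+$. Then $a + d = b + c$ holds in $G$, and both sides lie in $G_+$. Applying the semigroup homomorphism property of $f$ on $G_+$, I obtain
\[
f(a) + f(d) = f(a+d) = f(b+c) = f(b) + f(c),
\]
and rearranging in $H$ yields $f(a) - f(b) = f(c) - f(d)$. This is the heart of the argument; the directedness of $G$ is what makes it possible to write the sum $a+d$ inside $G_+$ where $f$ can be evaluated.

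Once $g$ is well defined, additivity is routine: given $x = a - b$ and $y = c - d$ with $a,b,c,d \in G_+$, one has $x + y = (a+c) - (b+d)$ with $a+c,\,b+d \in G_+$, so
\[
g(x+y) = f(a+c) - f(b+d) = f(a) + f(c) - f(b) - f(d) = g(x) + g(y).
\]
For uniqueness, any additive extension $\tilde g$ of $f$ must satisfy $\tilde g(x) = \tilde g(a) - \tilde g(b) = f(a) - f(b) = g(x)$ for $x = a - b \in G$, so $\tilde g = g$. Finally, assuming $f[G_+] \subseteq H_+$, monotonicity follows from additivity: if $x \le y$, then $y - x \in G_+$, whence $g(y) - g(x) = g(y-x) = f(y-x) \in H_+$, i.e.\ $g(x) \le g(y)$. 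I do not anticipate any genuine obstacle beyond the well-definedness step discussed above.
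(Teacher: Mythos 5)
Your proof is correct and follows essentially the same route as the paper's: define $g(a-b):=f(a)-f(b)$, verify well-definedness via $a+d=b+c\Rightarrow f(a)+f(d)=f(b)+f(c)$, and check additivity; you are in fact slightly more complete, since you also spell out $f(0)=0$, uniqueness, and monotonicity, which the paper leaves implicit. One small remark: the fact that $a+d\in G_+$ comes from $G_+$ being closed under addition, not from directedness --- directedness is only needed to guarantee that every $x\in G$ admits a decomposition $x=a-b$ with $a,b\in G_+$ in the first place.
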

\begin{proof}
	First observe that for $u,v,x,y\in G_+$ with $v-u=y-x$ we have that $f(v)-f(u)=f(y)-f(x)$. Indeed, from $v+x=u+y$ it follows that $f(v)+f(x)=f(v+x)=f(u+y)=f(u)+f(y)$.
	
	For $x\in G$ there are $u,v\in G_+$ such that $x=u-v$. Define $g(x):=f(u)-f(v)$ and note that the definition is independent of the choice of $u$ and $v$. 
	
	$g$ is additive. Indeed, let $x,y\in G$ be such that $x=v-u$ and $y=z-w$ with $u,v,w,z\in G_+$. Since  
	$f(v)+f(z)+f(u+w)=f(v+z)+f(u)+f(w)$, we have
	\begin{align*}g(x+y)&=g(v-u+z-w)=f(v+z)-f(u+w)\\&=f(v)-f(u)+f(z)-f(w)=g(v-u)+g(z-w)\\&=g(x)+g(y).\end{align*}
	Moreover, $g$ is unique.
\end{proof}

The next proposition contains the crucial conditions under which the partially ordered abelian group $\operatorname{A}_{\operatorname{b}}(G,H)$ is a lattice.

\begin{proposition}\label{pro:RK}
	Let $G$  be a directed partially ordered abelian group with the Riesz decomposition property and let $H$ be a Dedekind complete lattice-ordered abelian group. For $f\in \operatorname{A}_{\operatorname{b}}(G,H)$ and $x\in G_+$ define
	\[g(x):=\sup\{f(u); \, u\in [0,x]\}.\]
	Then there exists a unique additive map $h\in \operatorname{A}_{+}(G,H)$ such that $h=g$ on $G_+$. Moreover, the supremum of $f$ and $0$ exists in $\operatorname{A}_{\operatorname{b}}(G,H)$ and equals $h$. 
\end{proposition}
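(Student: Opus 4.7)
The plan is to first show that the formula for $g$ makes sense and yields a semigroup homomorphism $g\colon G_+ \to H$, then extend $g$ to an additive map $h$ via Proposition \ref{pro:Kantorovich}, and finally verify that $h$ is the least upper bound of $f$ and $0$ in $\operatorname{A}_{\operatorname{b}}(G,H)$.

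First I would check that $g$ is well-defined. Since $f$ is order bounded and $[0,x]$ is an order interval in $G$, the set $f[[0,x]]$ is order bounded in $H$. As $H$ is a Dedekind complete lattice, its supremum exists, so $g(x)$ is a well-defined element of $H$. Taking $u=0$ gives $g(x)\geq f(0)=0$, hence $g[G_+]\subseteq H_+$, and $g(0)=0$.

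Next I would show $g(x+y)=g(x)+g(y)$ for all $x,y\in G_+$. For the inequality $g(x)+g(y)\leq g(x+y)$, any $u_1\in[0,x]$ and $u_2\in[0,y]$ satisfy $u_1+u_2\in[0,x+y]$, so $f(u_1)+f(u_2)=f(u_1+u_2)\leq g(x+y)$; using translation invariance of $\leq$ to pull the suprema through the sum then yields the bound. For the reverse inequality, given $u\in[0,x+y]$, the Riesz decomposition property furnishes $u_1\in[0,x]$ and $u_2\in[0,y]$ with $u=u_1+u_2$, hence $f(u)=f(u_1)+f(u_2)\leq g(x)+g(y)$, and taking the supremum over $u$ gives $g(x+y)\leq g(x)+g(y)$. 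This is the step where the Riesz decomposition property is essential; the other direction is routine.

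Having shown that $g$ is a semigroup homomorphism from $G_+$ to $H$ with $g[G_+]\subseteq H_+$, Proposition \ref{pro:Kantorovich} produces a unique additive extension $h\in \operatorname{A}_+(G,H)$ with $h|_{G_+}=g$. Since $G$ is directed and $h$ is monotone, $h$ maps order bounded sets into order bounded sets, so $h\in\operatorname{A}_{\operatorname{b}}(G,H)$.

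Finally I would verify that $h=\sup\{f,0\}$ in $\operatorname{A}_{\operatorname{b}}(G,H)$. Clearly $h\geq 0$ on $G_+$ since $g(x)\in H_+$, and $h\geq f$ on $G_+$ since $f(x)\in f[[0,x]]$ so $f(x)\leq g(x)=h(x)$; thus $h$ is an upper bound of $\{f,0\}$ in the pre-order defined via $G_+$. For minimality, let $k\in\operatorname{A}_{\operatorname{b}}(G,H)$ satisfy $k\geq f$ and $k\geq 0$. For $x\in G_+$ and any $u\in[0,x]$ we have $x-u\in G_+$, hence $k(x-u)\geq 0$ and $k(u)\geq f(u)$, so $k(x)=k(u)+k(x-u)\geq f(u)$. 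Taking the supremum over $u\in[0,x]$ yields $k(x)\geq g(x)=h(x)$, i.e.\ $k\geq h$. Thus $h=f\vee 0$ in $\operatorname{A}_{\operatorname{b}}(G,H)$. The main obstacle is the semigroup homomorphism property of $g$, which is where the Riesz decomposition property enters in a nontrivial way; the rest is essentially unwinding definitions.
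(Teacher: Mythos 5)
Your proposal is correct and follows essentially the same route as the paper's proof: well-definedness of $g$ via order boundedness of $f$ and Dedekind completeness of $H$, the two-sided semigroup homomorphism argument with the Riesz decomposition property supplying the inequality $g(x+y)\leq g(x)+g(y)$, extension via Proposition \ref{pro:Kantorovich}, and the same verification that $h$ is the least upper bound of $f$ and $0$. The only cosmetic difference is that in the minimality step you decompose $k(x)=k(u)+k(x-u)$ where the paper invokes monotonicity of the upper bound directly; these are equivalent.
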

\begin{proof}
	As $f$ is order bounded and $H$ is Dedekind complete, $g\colon G_+\to H_+$ is well-defined. 
	
	To show that $g$ is a semigroup homomorphism,
	let $x,y\in G_+$. For $u\in [0,x]$ and $v\in[0,y]$ we have $u+v\in [0,x+y]$ and $f(u+v)=f(u)+f(v)$, hence $g(x+y)\geq f(u)+f(v)$. Then, by taking the supremum over all $u$, we have $g(x+y)\geq g(x) +f(v)$. Similarly, the supremum over $v$ yields $g(x+y)\geq g(x) +g(y)$. Next, for $w\in[0,x+y]$ the Riesz decomposition property of $G$ provides us with $u\in[0,x]$ and $v\in[0,y]$ such that $w=u+v$. Then $f(w)=f(u)+f(v)\leq g(x)+g(y)$. The supremum over $w$ results in $g(x+y)\leq g(x)+g(y)$.
	
	
	According to Proposition \ref{pro:Kantorovich}, there exists $h\in \operatorname{A}_{+}(G,H)$ such that $h=g$ on $G_+$.
	
	Now we show that $h$ is the supremum of $f$ and $0$. Indeed, for $x\in G_+$ we have 
	$h(x)=g(x)\ge f(x)$, hence $h$ is an upper bound of $f$ and $0$. Let $q\in\operatorname{A}_{+}(G,H)$ be an upper bound of $f$. Then for $x\in G_+$ and $u\in[0,x]$ we have $q(x)\ge q(u)\ge f(u)$, so that $q(x)\ge g(x)=h(x)$, thus $q\ge h$. Hence $h=f\vee 0$. 
\end{proof}
In fact, Proposition \ref{pro:RK} yields the positive part $f^+:=h$ of $f$, hence  $\operatorname{A}_{\operatorname{b}}(G,H)$ is a lattice. 

\begin{theorem}\label{the:RK_final}
	Let $G$  be a directed partially ordered abelian group with the Riesz decomposition property and let $H$ be a Dedekind complete lattice-ordered abelian group.
	 Then $\operatorname{A}_{\operatorname{b}}(G,H)$ is a Dedekind complete lattice-ordered abelian group.
	 \end{theorem}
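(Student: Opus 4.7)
My plan is to verify first that $\operatorname{A}_{\operatorname{b}}(G,H)$ is a lattice-ordered abelian group and then to establish Dedekind completeness by means of a Riesz-Kantorovich type supremum formula.

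\textbf{Step 1 (Lattice structure).} By Proposition \ref{pro:RK}, for each $f \in \operatorname{A}_{\operatorname{b}}(G,H)$ the positive part $f^+ = f \vee 0$ exists in $\operatorname{A}_{\operatorname{b}}(G,H)$. Since $\operatorname{A}_{\operatorname{b}}(G,H)$ is a partially ordered abelian group in which translation is an order-isomorphism, the identities $f \vee g = (f-g)^+ + g$ and $f \wedge g = -((-f) \vee (-g))$ yield the existence of all binary joins and meets, so $\operatorname{A}_{\operatorname{b}}(G,H)$ is a lattice-ordered abelian group.

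\textbf{Step 2 (Reduction to directed positive families).} To prove Dedekind completeness it suffices to show that every non-empty upward directed subset $F \subseteq \operatorname{A}_{\operatorname{b}}(G,H)$ that is bounded above has a supremum; the downward directed case follows by passing to $-F$, and a general non-empty bounded set has the same upper bounds as the directed set of finite joins of its elements, which exists by Step 1. Fix such a directed $F$ with upper bound $g$, pick $f_0 \in F$, and replace $F$ by $F - f_0$ and $g$ by $g - f_0$; this reduces to the case where every $f \in F$ satisfies $f \geq 0$, i.e.\ $F \subseteq \operatorname{A}_+(G,H)$.

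\textbf{Step 3 (Construction of the candidate supremum).} For $x \in G_+$ define $h(x) := \sup\{f(x); f \in F\}$, which exists in $H$ because $\{f(x); f \in F\}$ is bounded above by $g(x)$ and $H$ is Dedekind complete. The main technical step is to show that $h\colon G_+ \to H$ is a semigroup homomorphism. The inequality $h(x+y) \leq h(x) + h(y)$ is immediate from $f(x+y) = f(x) + f(y)$. For the reverse inequality, one uses the upward directedness of $F$: given $f_1, f_2 \in F$ there is $f \in F$ with $f \geq f_1, f_2$, hence $f_1(x) + f_2(y) \leq f(x) + f(y) = f(x+y) \leq h(x+y)$. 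Since translations in $H$ are order-isomorphisms, suprema commute with addition of a constant, so taking the supremum first over $f_1$ and then over $f_2$ yields $h(x) + h(y) \leq h(x+y)$. Thus $h$ is additive on $G_+$ and maps into $H_+$.

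\textbf{Step 4 (Extension and verification).} By Proposition \ref{pro:Kantorovich} applied to the directed group $G$, the map $h$ extends uniquely to an additive map $\tilde h \colon G \to H$, and since $h[G_+] \subseteq H_+$, the extension $\tilde h$ is monotone, so $\tilde h \in \operatorname{A}_+(G,H) \subseteq \operatorname{A}_{\operatorname{b}}(G,H)$. For each $f \in F$ and $x \in G_+$ we have $f(x) \leq h(x) = \tilde h(x)$, so $f \leq \tilde h$ in $\operatorname{A}_{\operatorname{b}}(G,H)$; and if $q \in \operatorname{A}_{\operatorname{b}}(G,H)$ satisfies $f \leq q$ for all $f \in F$, then at every $x \in G_+$, $\tilde h(x) = \sup_f f(x) \leq q(x)$, whence $\tilde h \leq q$. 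Therefore $\tilde h = \sup F$, completing the proof. The main obstacle is the additivity of $h$ on $G_+$, which is precisely where both the directedness of $F$ and the Dedekind completeness of $H$ are used; the Riesz decomposition property of $G$, used crucially in Proposition \ref{pro:RK}, is already invoked via Step 1.
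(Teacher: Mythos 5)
Your overall route is the same as the paper's: reduce to an upward directed family (the finite joins), define the candidate supremum pointwise on $G_+$, prove it is a semigroup homomorphism using directedness together with the Dedekind completeness of $H$, extend it to $G$ by Proposition \ref{pro:Kantorovich}, and check that the extension is the least upper bound. The one genuine error is in Step 2: translating a directed set $F$ by one of its elements $f_0$ does \emph{not} produce a set of positive maps. An element $f_0$ of an upward directed set need not be a lower bound of that set (take $F=\{a,b,a\vee b\}$ with $a,b$ incomparable; whichever $f_0$ you pick, some $f-f_0$ fails to be positive). So the claim that $F-f_0\subseteq \operatorname{A}_+(G,H)$ is false, and with it the justification in Step 4 that $\tilde h$ is monotone because $h[G_+]\subseteq H_+$, since that conclusion needs $f(x)\geq 0$ for every $f\in F$ and $x\in G_+$.

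The damage is local and repairable in two ways. Either replace $F$ by $\{(f\vee f_0)-f_0;\, f\in F\}$, which is directed, consists of positive maps, and has the same upper bounds as $F$ up to the translation by $f_0$; or drop the positivity reduction altogether --- nothing in Step 3 needs it, because $\{f(x);\, f\in F\}$ is bounded above by $g(x)$ regardless, and in Step 4 you can conclude $\tilde h\in\operatorname{A}_{\operatorname{b}}(G,H)$ from $f_0\leq \tilde h\leq g$: the map $\tilde h-f_0$ is positive, hence monotone, hence order bounded, and therefore so is $\tilde h=(\tilde h-f_0)+f_0$. The latter is essentially what the paper does (it never normalises to positive families; it works directly with the set of finite suprema and sandwiches the extension between a member of the original set and an arbitrary upper bound). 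With either fix your argument is complete.
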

\begin{proof}
	It remains to show that
	$\operatorname{A}_{\operatorname{b}}(G,H)$ 	
	  is Dedekind complete. Let $A$ be a non-empty subset of $\operatorname{A}_{\operatorname{b}}(G,H)$ that is bounded from above. Let $q$ be an upper bound of $A$. Denote by $B$ the set of all suprema of finite non-empty subsets of $A$. 
	Note that $q$ is also an upper bound of $B$.
	For $x\in G_+$ define
	\begin{equation} \label{equ:InfRieszKantorovichB}
		g(x):=\sup\{f(x);\, f\in B\}.
	\end{equation} 
	To show that $g$ is a semigroup homomorphism, let $x,y\in G_+$. For every $f\in B$ we have $f(x+y)=f(x)+f(y)\le g(x)+g(y)$, hence $g(x+y)\le g(x)+g(y)$. Conversely, for every $f,h\in B$ we have $f\vee h\in B$, hence $g(x+y)\ge (f\vee h)(x+y)=(f\vee h)(x)+(f\vee h)(y)\ge f(x)+h(y)$. By taking supremum first over $f$ and then over $h$ we obtain $g(x+y)\ge g(x)+g(y)$. We conclude that $g$ is a semigroup homomorphism. 
	
	According to Proposition \ref{pro:Kantorovich} there exists a unique map 
	$h\in \operatorname{A}(G,H)$ with $h=g$ on $G_+$. From the definition of $g$ it is clear that $h$ is an upper bound of $B$, and hence of $A$. As $A$ is non-empty, there is $f\in A$ such that  $f\leq h$. Moreover, $h\leq q$, hence $h\in\operatorname{A}_{\operatorname{b}}(G,H)$.
	
	As $q$ is an arbitrary upper bound of $A$, it follows that $h$ is the supremum of $A$.
\end{proof}

\begin{remark}\label{rem:RK}
	Under the conditions of Theorem \ref{the:RK_final}, 
	the lattice operations in $\operatorname{A}_{\operatorname{b}}(G,H)$ are given by the following formulas.
	For every $x\in G_+$ and $f,g\in\operatorname{A}_{\operatorname{b}}(G,H)$ we have 
	\begin{eqnarray*}
		f^+(x)&=&\sup\{f(u); \, u\in [0,x]\},\\
		f^-(x)&=&\sup\{-f(u); \, u\in [0,x]\},\\
		|f|(x)&=&\sup\{|f(u)|; \, u\in [-x,x]\},\\
		(f\vee g)(x)&=&\sup\{f(x-u)+g(u); \, u\in [0,x]\},\\
		(f\wedge g)(x)&=&\inf\{f(x-u)+g(u); \, u\in [0,x]\}.
	\end{eqnarray*}
	These formulas are called the \emph{Riesz-Kantorovich formulas}.
\end{remark}

\begin{corollary}\label{cor:RK_pointwise_convergence}
	Under the conditions of Theorem \ref{the:RK_final} the following statements are valid.
	\begin{itemize}
		\item[(i)] If $A\subseteq \operatorname{A}_{\operatorname{b}}(G,H)$ is upward directed and bounded from above, then for every $x \in G_+$ we have
		\[(\sup A)(x)=\sup\{f(x); f \in A\}.\] 
		A similar statement is valid for the infimum of a downward directed set that is bounded from below.
		\item[(ii)] For a net  $(f_\alpha)_{\alpha\in A}$ in $\operatorname{A}_{\operatorname{b}}(G,H)$ we have $f_\alpha\downarrow 0$ if and only if for every $x\in G_+$ it holds $f_\alpha(x)\downarrow 0$.
		\item[(iii)] Let $i\in\{1,2,3\}$,   $(f_\alpha)_{\alpha\in A}$ be a net in $\operatorname{A}_{\operatorname{b}}(G,H)$ and $f\in \operatorname{A}_{\operatorname{b}}(G,H)$ with $f_\alpha \xrightarrow{o_i}  f$. Then   for every $x\in G$ one has $f_\alpha(x)\xrightarrow{o_i} f(x)$.
	\end{itemize} 
\end{corollary}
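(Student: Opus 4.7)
The plan is to prove the three statements sequentially, with each building on its predecessor. The bulk of the argument is already contained in the proof of Theorem \ref{the:RK_final}; the corollary mainly extracts the pointwise formulas.

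For (i), I would use the construction from the proof of Theorem \ref{the:RK_final}. Given an upward directed $A \subseteq \operatorname{A}_{\operatorname{b}}(G,H)$ that is bounded above, let $B$ denote the set of finite suprema of non-empty subsets of $A$. The proof of Theorem \ref{the:RK_final} constructs $\sup A$ (which coincides with $\sup B$ since $A$ and $B$ have the same set of upper bounds) so that on $G_+$ it is given by $(\sup A)(x) = \sup\{f(x); f \in B\}$. Since $A$ is upward directed, every $f = f_1 \vee \cdots \vee f_n \in B$ is dominated by some $h \in A$, so $f(x) \leq h(x)$ for $x \in G_+$; combined with $A \subseteq B$, this yields $\sup\{f(x); f \in A\} = \sup\{f(x); f \in B\}$. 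The infimum statement follows by symmetry (e.g.\ by passing to $-A$).

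For (ii), the forward implication reduces to (i): if $f_\alpha \downarrow 0$ in $\operatorname{A}_{\operatorname{b}}(G,H)$, the set $\{-f_\alpha; \alpha \in A\}$ is upward directed and bounded above by $0$ with supremum $0$, so (i) gives $\sup\{-f_\alpha(x); \alpha \in A\} = 0$, i.e.\ $f_\alpha(x) \downarrow 0$, for every $x \in G_+$. Conversely, if $f_\alpha(x) \downarrow 0$ for every $x \in G_+$, then the pointwise definition of the order on $\operatorname{A}_{\operatorname{b}}(G,H)$ makes $(f_\alpha)$ decreasing in $\operatorname{A}_{\operatorname{b}}(G,H)$, and any lower bound $g$ of the net satisfies $g(x) \leq \inf_\alpha f_\alpha(x) = 0$ on $G_+$, whence $g \leq 0$ and therefore $\inf_\alpha f_\alpha = 0$.

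For (iii), I would reduce to the case $f = 0$ via Proposition \ref{pro:char_o_i_poag}(iv). For each $i \in \{1,2,3\}$, the relevant part of Proposition \ref{pro:char_o_i_poag} produces a dominating net in $\operatorname{A}_{\operatorname{b}}(G,H)$, say $\check{f}_\beta \downarrow 0$, with $\pm f_\alpha \leq \check{f}_\beta$ for suitably large $\alpha$. By (ii) we have $\check{f}_\beta(x) \downarrow 0$ for every $x \in G_+$, and the pointwise inequalities $\pm f_\alpha(x) \leq \check{f}_\beta(x)$ then yield $f_\alpha(x) \xrightarrow{o_i} 0$ in $H$ via the reverse direction of Proposition \ref{pro:char_o_i_poag}. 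For general $x \in G$, I decompose $x = u - v$ with $u, v \in G_+$ (possible because $G$ is directed), so $f_\alpha(x) = f_\alpha(u) - f_\alpha(v)$, and I combine the two convergences via Proposition \ref{pro:plus_o_i} together with the diagonal-subnet technique of Remark \ref{rem:subnet_o_i}. The main obstacle is merely bookkeeping the three cases of Proposition \ref{pro:char_o_i_poag} uniformly for $i = 1, 2, 3$, but the structure of the argument is identical in each case.
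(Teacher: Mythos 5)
Your proposal is correct and follows essentially the same route as the paper: part (i) via the set $B$ of finite suprema from the proof of Theorem \ref{the:RK_final} and the observation that $A$ is majorising in $B$ (the paper cites Lemma \ref{lem:majosetsandsuppe} where you argue it inline), part (ii) as a consequence of (i), and part (iii) by reducing to $f=0$, applying (ii) to the dominating net, and handling general $x\in G$ via directedness and Proposition \ref{pro:plus_o_i}. Your explicit bookkeeping of the three cases of Proposition \ref{pro:char_o_i_poag} in (iii) is slightly more careful than the paper's, but the argument is the same.
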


\begin{proof}
	To prove the statement in (i), let $B$ be as in the proof of Theorem \ref{the:RK_final}. Equation \eqref{equ:InfRieszKantorovichB} shows that for every $x \in G_+$ we have $(\sup A)(x)=\sup\{f(x); f \in B\}$. Since $A$ is a majorising subset of $B$, we obtain that $\{f(x); f \in A\}$ 
	is a majorising subset of $\{f(x); f \in B\}$. Thus we conclude $(\sup A)(x)=\sup\{f(x); f \in A\}$ by Lemma \ref{lem:majosetsandsuppe}. \\	
	The statement (ii) follows from (i).
	To show (iii), let the net $(\check{f}_\alpha)_{\alpha\in A}$ in $\operatorname{A}_{\operatorname{b}}(G,H)$ be such that $\pm(f_\alpha-f)\le \check{f}_\alpha\downarrow 0$. By (ii), for $x\in G_+$ we get $\pm(f_\alpha(x)-f(x))\le \check{f}_\alpha(x)\downarrow 0$. As $G$ is directed, Proposition
	\ref{pro:plus_o_i}
	 yields the statement for $x\in G$.
\end{proof}

\section{Properties of the set of order continuous homomorphisms of partially ordered abelian groups}
In this section let $G$, $H$ be partially ordered abelian groups. We show that under the conditions of the Riesz-Kantorovich Theorem \ref{the:RK_final} for an order bounded map $f\colon G \to H$ the four concepts of continuity from Section 5 coincide. We furthermore show that under the same conditions the set of order continuous maps is an order closed ideal in the lattice order abelian group $\operatorname{A}_{\operatorname{b}} (G,H)$ of all order bounded additive maps. For $i\in\{1,2,3\}$ we denote the set of all $o_i$-continuous maps in $\operatorname{A}_{\operatorname{b}}(G,H)$ by $\operatorname{A}^{o_i}_{\operatorname{b}}(G,H)$. The set of all order continuous maps in $\operatorname{A}_{\operatorname{b}}(G,H)$ is denoted by $\operatorname{A}^{\tau_o}_{\operatorname{b}}(G,H)$. Theorem \ref{thm:monotone_ordercont} reads then as
\begin{equation}
 \label{equ:def_cone_ocont_maps}
 \operatorname{A}^{o_i}_{\operatorname{b}}(G,H)\cap \operatorname{A}_+(G,H)=\operatorname{A}^{\tau_o}_{\operatorname{b}}(G,H)\cap \operatorname{A}_+(G,H)=:\operatorname{A}_+^{\operatorname{oc}}(G,H).\end{equation}
 
The set $\operatorname{A}_+^{\operatorname{oc}}(G,H)$ of positive order continuous additive maps is characterised as follows. 
 
 \begin{proposition}\label{pro:charAocplus}
 	For every $f\in \operatorname{A}(G,H)$ we have $f\in \operatorname{A}_+^{\operatorname{oc}}(G,H)$ if and only if for every net $(x_\alpha)_{\alpha\in A}$ with $x_\alpha\downarrow 0$ it holds $f(x_\alpha)\downarrow 0$. 
\end{proposition}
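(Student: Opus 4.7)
The plan is to reduce both directions to Theorem \ref{thm:monotone_ordercont} in combination with \eqref{equ:def_cone_ocont_maps}, which together say that $\operatorname{A}_+^{\operatorname{oc}}(G,H)$ consists precisely of those additive, monotone, order bounded maps whose action on monotone nets satisfies the infimum/supremum criterion of Theorem \ref{thm:monotone_ordercont}(iii).

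For the forward direction, I would assume $f\in\operatorname{A}_+^{\operatorname{oc}}(G,H)$ and apply Theorem \ref{thm:monotone_ordercont}(iii)(a) to a net $x_\alpha\downarrow 0$: it delivers $\inf\{f(x_\alpha);\,\alpha\in A\}=f(0)=0$, while the monotonicity of $f$ ensures $f(x_\alpha)\downarrow$. Combining these gives $f(x_\alpha)\downarrow 0$.

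For the converse, suppose $f\in\operatorname{A}(G,H)$ satisfies the net hypothesis. First I would extract monotonicity: given $x\in G_+$, take the two-point directed set $A=\{0,1\}$ with $0<1$ and define $x_0:=x$, $x_1:=0$. Then $x_\alpha\downarrow 0$, so the hypothesis forces $f(x_\alpha)\downarrow$, and in particular $f(x)=f(x_0)\geq f(x_1)=0$. Thus $f\in\operatorname{A}_+(G,H)$, and $f$ is automatically order bounded because it maps $[u,v]$ into $[f(u),f(v)]$.

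With monotonicity in hand, the second step is to verify the criterion of Theorem \ref{thm:monotone_ordercont}(iii). For $x_\alpha\downarrow x$, translation invariance in the group gives $x_\alpha-x\downarrow 0$, and additivity together with the hypothesis yields $f(x_\alpha)-f(x)=f(x_\alpha-x)\downarrow 0$, so $f(x_\alpha)\downarrow f(x)$. The case $x_\alpha\uparrow x$ is symmetric via $x-x_\alpha\downarrow 0$. Invoking Theorem \ref{thm:monotone_ordercont} through \eqref{equ:def_cone_ocont_maps} then concludes $f\in\operatorname{A}_+^{\operatorname{oc}}(G,H)$. The only non-routine point is extracting monotonicity from a hypothesis that on its face only constrains already decreasing nets, and the two-point-net trick resolves this cleanly.
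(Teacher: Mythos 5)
Your proposal is correct and follows essentially the same route as the paper: both directions reduce to Theorem \ref{thm:monotone_ordercont} via \eqref{equ:def_cone_ocont_maps}, monotonicity is extracted with the same two-point decreasing-net trick (the paper indexes it by $\{-x,0\}$, you by $\{0,1\}$), and the passage from nets decreasing to $0$ to nets decreasing to arbitrary $x$ is handled by the same translation-invariance argument, which you merely spell out in more detail than the paper does. No gaps.
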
	 
	 
	 \begin{proof}
	 	Let $f\in \operatorname{A}(G,H)$ be such that for every 
	 	net $(x_\alpha)_{\alpha\in A}$ with $x_\alpha\downarrow 0$ it holds $f(x_\alpha)\downarrow 0$.
	 	First we show that $f$ is monotone. Indeed, let $x\in G_+$, then for the net $(x_{\alpha})_{\alpha\in\{-x,0\}}$ with $x_{\alpha}=-\alpha$ we have $x_\alpha\downarrow 0$ and hence $f(x_\alpha)\downarrow 0$, which implies $f(x)=f(x_{-x})\ge 0$. 
	 	
	 	To show that $f$ is order continuous, note that the assumption implies that for every 
	 	net $(x_\alpha)_{\alpha\in A}$ with $x_\alpha\uparrow 0$ we have $f(x_\alpha)\uparrow 0$.   
	 	Then Theorem \ref{thm:monotone_ordercont} yields the order continuity of $f$, due to the translation invariance of infimum and supremum.
	 	 
	 	The converse implication follows directly from  
	 	Theorem \ref{thm:monotone_ordercont}.
	 \end{proof}
	 
As a consequence of Proposition \ref{pro:charAocplus} we obtain the following statement.

\begin{proposition} \label{pro:Aoc+oclosed}
	Under the conditions of Theorem \ref{the:RK_final},
	the set $\operatorname{A}_+^{\operatorname{oc}}(G,H)$ is order closed in $\operatorname{A}_{\operatorname{b}}(G,H)$.
\end{proposition}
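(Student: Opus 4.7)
My plan is to combine Theorem \ref{thm:orderclosed}, Proposition \ref{pro:charAocplus} and the Riesz--Kantorovich machinery from Corollary \ref{cor:RK_pointwise_convergence}. By Theorem \ref{thm:orderclosed} applied to the partially ordered abelian group $\operatorname{A}_{\operatorname{b}}(G,H)$, it is enough to show that $\operatorname{A}_+^{\operatorname{oc}}(G,H)$ is closed under $o_1$-convergence. So I fix a net $(f_\alpha)_{\alpha\in A}$ in $\operatorname{A}_+^{\operatorname{oc}}(G,H)$ with $f_\alpha\xrightarrow{o_1}f$ in $\operatorname{A}_{\operatorname{b}}(G,H)$, and need to verify that $f$ is positive and order continuous. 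Positivity is immediate: by Corollary \ref{cor:upperboundset_orderclosed} the set $\operatorname{A}_+(G,H)=\operatorname{A}(G,H)_{\ge 0}$ is order closed, so $f_\alpha\ge 0$ for all $\alpha$ forces $f\ge 0$. By Proposition \ref{pro:charAocplus} it then remains to verify that for every net $(x_\beta)_{\beta\in B}$ in $G$ with $x_\beta\downarrow 0$ we have $f(x_\beta)\downarrow 0$.

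To exploit the $o_1$-convergence, I choose nets $(\hat f_\alpha)_{\alpha\in A}$, $(\check f_\alpha)_{\alpha\in A}$ in $\operatorname{A}_{\operatorname{b}}(G,H)$ with $\hat f_\alpha\uparrow f$, $\check f_\alpha\downarrow f$, and $\hat f_\alpha\le f_\alpha\le\check f_\alpha$ for every $\alpha$. Setting $g_\alpha:=\check f_\alpha-\hat f_\alpha$, I have $g_\alpha\downarrow 0$ in $\operatorname{A}_{\operatorname{b}}(G,H)$, each $g_\alpha$ is positive and hence monotone, and moreover $\hat f_\alpha\le f\le\check f_\alpha$, so for every $x\in G_+$
\[
|f(x)-f_\alpha(x)|\;\le\;\check f_\alpha(x)-\hat f_\alpha(x)\;=\;g_\alpha(x).
\]
By Corollary \ref{cor:RK_pointwise_convergence}(ii), for every such $x$ one has $g_\alpha(x)\downarrow 0$ in $H$. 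This is the bridge from the abstract $o_1$-convergence of $(f_\alpha)$ in $\operatorname{A}_{\operatorname{b}}(G,H)$ to a concrete pointwise control.

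Now I run a double-infimum argument. Fix $\beta_0\in B$; then for every $\beta\ge\beta_0$ we have $x_\beta\le x_{\beta_0}$, hence by monotonicity of $g_\alpha$,
\[
f(x_\beta)\;\le\;f_\alpha(x_\beta)+g_\alpha(x_\beta)\;\le\;f_\alpha(x_\beta)+g_\alpha(x_{\beta_0}).
\]
Let $v\in H$ be any lower bound of $\{f(x_\beta);\,\beta\in B\}$. Then for all $\beta\ge\beta_0$ and all $\alpha\in A$,
\[
v-g_\alpha(x_{\beta_0})\;\le\;f_\alpha(x_\beta).
\]
Since $f_\alpha\in\operatorname{A}_+^{\operatorname{oc}}(G,H)$, Proposition \ref{pro:charAocplus} yields $f_\alpha(x_\beta)\downarrow_\beta 0$; taking the infimum over $\beta\ge\beta_0$ (and using translation invariance of infima in the lattice-ordered group $H$) gives $v-g_\alpha(x_{\beta_0})\le 0$, i.e.\ $v\le g_\alpha(x_{\beta_0})$ for every $\alpha$. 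Taking the infimum over $\alpha$ and using $g_\alpha(x_{\beta_0})\downarrow 0$ yields $v\le 0$. As $0$ is itself a lower bound of $\{f(x_\beta)\}$, this shows $\inf_\beta f(x_\beta)=0$ and hence $f(x_\beta)\downarrow 0$.

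I expect the main subtlety to be bookkeeping with the two independent directed sets $A$ and $B$: one has to first freeze $\beta_0\in B$, do an inner infimum over $\beta\ge\beta_0$ using the order continuity of each fixed $f_\alpha$, and only then take the outer infimum over $\alpha$ using the pointwise-at-$x_{\beta_0}$ convergence $g_\alpha(x_{\beta_0})\downarrow 0$ supplied by Corollary \ref{cor:RK_pointwise_convergence}(ii). Once this order of quantifiers is respected, the estimate $f\le f_\alpha+g_\alpha$ together with translation invariance does all the work.
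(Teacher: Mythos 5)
Your proof is correct and follows essentially the same route as the paper: reduce to $o_1$-closedness via Theorem \ref{thm:orderclosed}, get positivity from order closedness of $\operatorname{A}_+(G,H)$, and then run the same two-stage infimum argument ($\inf_{\beta\ge\beta_0}f_\alpha(x_\beta)=0$ first, then $\check f_\alpha(x_{\beta_0})\downarrow_\alpha 0$ via Corollary \ref{cor:RK_pointwise_convergence}). The only cosmetic difference is that you build the dominating net $g_\alpha=\check f_\alpha-\hat f_\alpha$ by hand where the paper simply invokes Proposition \ref{pro:char_o_i_poag}.
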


\begin{proof}
	We use Theorem \ref{thm:orderclosed}.
		Let $(f_\alpha)_{\alpha\in A}$ be a net in $\operatorname{A}_+^{\operatorname{oc}}(G,H)$ such that $f_\alpha\xrightarrow{o_1}f\in \operatorname{A}_{\operatorname{b}}(G,H)$. By Remark \ref{rem:G+-G+orderopen} (a), the set $\operatorname{A}_{+}(G,H)$
	is order closed, hence  
	$f$ is monotone.
		By Proposition \ref{pro:char_o_i_poag} there is a net $(\check{f}_\alpha)_{\alpha\in A}$ such that $\check{f}_\alpha\downarrow 0$ and $\pm (f_\alpha-f)\le \check{f}_\alpha$ for every $\alpha\in A$. In order to apply Proposition \ref{pro:charAocplus}, let $(x_\beta)_{\beta\in B}$ be a net in $G$ such that $x_\beta\downarrow 0$. Since $f$ is monotone, $f(x_\beta)\downarrow$ and $0$ is a lower bound of $\{f(x_\beta);\, \beta\in B\}$. Let $z$ be a lower bound of $\{f(x_\beta);\, \beta\in B\}$. Let  $\beta\in B$. We will show that for every $\alpha\in A$ we have that $z\leq \check{f}_\alpha(x_\beta)$.
		Indeed, for $\gamma\in B_{\geq \beta}$ we calculate \[z\leq f(x_\gamma)\leq (f_\alpha+\check{f}_\alpha)(x_\gamma)\leq f_\alpha(x_\gamma)+\check{f}_\alpha(x_\beta),\]
		and from $f_\alpha \in\operatorname{A}_+^{\operatorname{oc}}(G,H)$ we conclude $\inf\{f_\alpha(x_\gamma);\, \gamma\in B_{\geq \beta} \}=0$. Hence $z\le \check{f}_\alpha(x_\beta)$.
		Thus, 
		Corollary \ref{cor:RK_pointwise_convergence} establishs $z\leq\inf\{\check{f}_\alpha(x_\beta);\, \alpha\in A\}=0$.     
\end{proof}

In order to establish $\operatorname{A}_{\operatorname{b}}^{\tau_o}(G,H)$ as an ideal in $\operatorname{A}_{\operatorname{b}}(G,H)$, we first show the following. 

\begin{proposition}\label{pro:full_subgroup}
	The set $\operatorname{A}^{o_i}_{\operatorname{b}}(G,H)$ is a full subgroup of $\operatorname{A}(G,H)$. 
\end{proposition}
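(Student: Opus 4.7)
The claim splits into verifying that $\operatorname{A}^{o_i}_{\operatorname{b}}(G,H)$ is a subgroup of $\operatorname{A}(G,H)$ and that it is a full subset.

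For the subgroup part, the zero map is trivially $o_i$-continuous and order bounded. If $f\in\operatorname{A}^{o_i}_{\operatorname{b}}(G,H)$, then $-f$ is $o_i$-continuous because $y_\alpha\xrightarrow{o_i}y$ in $H$ implies $-y_\alpha\xrightarrow{o_i}-y$: negating and swapping the roles of the bounding nets in Definition \ref{def:orderconvergences} produces the required witnesses. Order boundedness of $-f$ is immediate from $(-f)[[a,b]]=-f[[a,b]]$. Given $f,g\in\operatorname{A}^{o_i}_{\operatorname{b}}(G,H)$ and a net $x_\alpha\xrightarrow{o_i}x$ in $G$, the diagonal subnet observation in Remark \ref{rem:subnet_o_i}, combined with Proposition \ref{pro:plus_o_i}, yields $(f+g)(x_\alpha)=f(x_\alpha)+g(x_\alpha)\xrightarrow{o_i}f(x)+g(x)$; order boundedness of $f+g$ follows from $(f+g)[[a,b]]\subseteq f[[a,b]]+g[[a,b]]$.

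For fullness I invoke the characterisation recalled in the preliminaries: a subgroup $U$ of a partially ordered abelian group is full iff $U\cap G_+$ is full. Hence it is enough to show the following: whenever $h\in\operatorname{A}^{o_i}_{\operatorname{b}}(G,H)\cap\operatorname{A}_+(G,H)$ and $g\in\operatorname{A}(G,H)$ satisfies $0\le g\le h$, then $g\in\operatorname{A}^{o_i}_{\operatorname{b}}(G,H)$. Since $g\ge 0$, the map $g$ is monotone, so Theorem \ref{thm:monotone_ordercont} reduces $o_i$-continuity of $g$ to the two monotone-limit conditions: $x_\alpha\downarrow x$ in $G$ should imply $\inf_\alpha g(x_\alpha)=g(x)$, and analogously for $\uparrow$. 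Order boundedness of $g$ is immediate from monotonicity.

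The critical step is the infimum identity. Given $x_\alpha\downarrow x$ and a lower bound $z\in H$ of $\{g(x_\alpha);\,\alpha\in A\}$, monotonicity of $h-g\ge 0$ applied to $x_\alpha-x\ge 0$ gives $(h-g)(x_\alpha)\ge(h-g)(x)$, whence
\[h(x_\alpha)=g(x_\alpha)+(h-g)(x_\alpha)\ge z+h(x)-g(x).\]
By Theorem \ref{thm:monotone_ordercont} applied to the monotone $o_i$-continuous map $h$, we have $\inf_\alpha h(x_\alpha)=h(x)$, so $z+h(x)-g(x)\le h(x)$, i.e.\ $z\le g(x)$; the argument for $\uparrow$ is dual. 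The main subtlety circumvented here is that the order on $\operatorname{A}(G,H)$ is pointwise only on $G_+$, so one cannot substitute $x_\alpha$ itself into $g\le h$; instead, one must pass to the positive differences $x_\alpha-x$ and exploit the translated monotonicity of $h-g$.
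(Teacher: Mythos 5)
Your proof is correct and follows essentially the same route as the paper: the subgroup property via Proposition \ref{pro:plus_o_i} and Remark \ref{rem:subnet_o_i}, reduction of fullness to the positive part, and a squeeze argument on monotone nets. The only cosmetic difference is that the paper invokes Proposition \ref{pro:charAocplus} to work exclusively with nets $x_\alpha\downarrow 0$ (where $g\le h$ applies directly since $x_\alpha\ge 0$), whereas you work at a general limit $x$ and handle the translation by applying $h-g\ge 0$ to $x_\alpha-x$.
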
	 
\begin{proof}
	Due to Proposition \ref{pro:plus_o_i} the set $\operatorname{A}^{o_i}_{\operatorname{b}}(G,H)$ is a subgroup of $\operatorname{A}(G,H)$. To prove that $\operatorname{A}^{o_i}_{\operatorname{b}}(G,H)$ is  full, it suffices to show that $\operatorname{A}^{\operatorname{oc}}_+(G,H)$ is full. Let $f,h\in \operatorname{A}^{\operatorname{oc}}_+(G,H)$ and let $g\in \operatorname{A}(G,H)$ be such that $f\leq g\leq h$. For a net $(x_\alpha)_{\alpha\in A}$ with $x_\alpha\downarrow 0$ we have $h(x_\alpha)\downarrow 0$. Since $0\leq f(x_\alpha)\leq g(x_\alpha)\le h(x_\alpha)$ (for every $\alpha\in A$) we conclude  $g(x_\alpha)\downarrow 0$.      
\end{proof}

To show that under the conditions of the Riesz-Kantorovich Theorem \ref{the:RK_final} the sets $\operatorname{A}_{\operatorname{b}}^{\tau_o}(G,H)$ and $\operatorname{A}_{\operatorname{b}}^{o_i}(G,H)$ coincide for $i\in \{1,2,3\}$, we need three technical statements.  

\begin{lemma} \label{lem:Ogasawara_RDP}
	Let $G$ be a partially ordered abelian group that satisfies the Riesz 
	decomposition property. Let $x,y,z \in G$ be such that
	$\{x,y\} \subseteq [0,z]$.
	Then there is $w \in G$ with
	\begin{itemize}
		\item[(i)] $\pm w \leq x$,
		\item[(ii)] $\pm w \leq y$ and
		\item[(iii)] $y-w \leq z - x$.
	\end{itemize}
\end{lemma}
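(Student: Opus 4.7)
The plan is to reduce the three inequalities (i)--(iii) to a single Riesz interpolation request. Rewriting the conditions, (i) becomes $-x \leq w \leq x$, (ii) becomes $-y \leq w \leq y$, and (iii) becomes $w \geq x+y-z$. Thus it suffices to find $w \in G$ satisfying
\[
\{-x,\, -y,\, x+y-z\} \;\leq\; w \;\leq\; \{x,\, y\}.
\]
Since (as recalled in the paper) the Riesz decomposition property is equivalent to the Riesz interpolation property, it is enough to verify that every element of the left-hand set is dominated by every element of the right-hand set.

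I would simply check the six inequalities using $x,y \in [0,z]$. The four inequalities of the form $-x \leq x$, $-x \leq y$, $-y \leq x$, $-y \leq y$ all reduce to $x \geq 0$ or $y \geq 0$ or $x+y \geq 0$, which hold because $\{x,y\} \subseteq G_+$. The two remaining inequalities $x+y-z \leq x$ and $x+y-z \leq y$ reduce to $y \leq z$ and $x \leq z$, which hold by $\{x,y\} \subseteq [0,z]$. Interpolation then produces the desired $w$, and the three required inequalities follow by rearrangement.

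I do not expect any genuine obstacle here: the lemma is essentially a bookkeeping consequence of Riesz interpolation. The only mildly delicate point is spotting the right form for the lower bound $x+y-z$ so that condition (iii) is captured symmetrically alongside (i) and (ii); once this is done, the proof is a one-line application of interpolation followed by verification of the six inequalities listed above.
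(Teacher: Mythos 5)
Your proof is correct and coincides with the paper's own argument: the paper likewise interpolates between $\{-x,-y,x+y-z\}$ and $\{x,y\}$ via the Riesz interpolation property (equivalent to the Riesz decomposition property) and reads off (i)--(iii). You merely spell out the six comparison inequalities that the paper leaves as ``straightforward.''
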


\begin{proof}
	Let $A:=\{-x,-y,x+y-z\}$ and $B:=\{x,y\}$. Since $A\leq B$, the Riesz decomposition property implies the existence of $w \in G$ with $A \leq w \leq B$.	It is straightforward that $w$ satisfies (i), (ii) and (iii).
\end{proof}

\begin{lemma} \label{lem:Ogasawara_existencefancynet}
	Let $G$ be a partially ordered abelian group with the Riesz decomposition property
	and let $H$ be a Dedekind complete lattice-ordered abelian group. Let $f \in \operatorname{A}_{\operatorname{b}}^{\tau_o}(G,H)$
	and $(y_\alpha)_{\alpha \in A}$ be a net in $G$ such that $y_\alpha \downarrow 0$.
	For $\beta \in A$ and $y \in [0,y_\beta]$ there is a net
	$(w_\alpha)_{\alpha \in A_{\geq \beta}}$
	in $G$ such that 
	\begin{itemize}
		\item[(i)] $0 \leq y-w_\alpha \leq y_\beta -y_\alpha$ for every 
		$\alpha \in A_{\geq \beta}$,
		\item[(ii)]  $\inf\{f(w_\alpha); \alpha \in A_{\geq \beta}\}$ exists
		and satisfies 
		$\inf\{f(w_\alpha); \alpha \in A_{\geq \beta}\}\leq 0$.
	\end{itemize}
\end{lemma}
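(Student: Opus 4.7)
The plan is to construct $w_\alpha$ pointwise via Lemma \ref{lem:Ogasawara_RDP}, and then to extract (ii) from the order continuity of $f$ through Lemma \ref{lem:topologicalconv_inf}(i).

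First, fix $\alpha \in A_{\geq \beta}$. Since $y_\gamma \downarrow 0$ and $\alpha \geq \beta$, we have $y_\alpha \in [0,y_\beta]$, and by assumption $y \in [0,y_\beta]$. Applying Lemma \ref{lem:Ogasawara_RDP} with $x := y_\alpha$, $y := y$, $z := y_\beta$ produces $w_\alpha \in G$ satisfying $\pm w_\alpha \leq y_\alpha$, $\pm w_\alpha \leq y$, and $y - w_\alpha \leq y_\beta - y_\alpha$. From $w_\alpha \leq y$ we deduce $0 \leq y - w_\alpha$, and combining this with the last inequality gives (i) at once.

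For (ii), I first need to know the infimum exists. Since $\pm w_\alpha \leq y_\alpha \leq y_\beta$ for every $\alpha \in A_{\geq \beta}$, the set $\{w_\alpha; \alpha \in A_{\geq \beta}\}$ is contained in the order interval $[-y_\beta,y_\beta]$ of $G$. Order boundedness of $f$ therefore makes $\{f(w_\alpha); \alpha \in A_{\geq \beta}\}$ order bounded in $H$, and Dedekind completeness of $H$ guarantees that its infimum exists.

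The key step is to bound this infimum by $0$. Restricting the decreasing net $(y_\gamma)_{\gamma\in A}$ to $A_{\geq \beta}$ preserves $y_\gamma \downarrow 0$, because $(y_\gamma)$ is decreasing so every $y_{\gamma_0}$ dominates some $y_\gamma$ with $\gamma \in A_{\geq \beta}$; hence the infimum on $A_{\geq \beta}$ is still $0$, and $-y_\gamma \uparrow 0$ on $A_{\geq\beta}$ as well. The estimates $-y_\alpha \leq w_\alpha \leq y_\alpha$ then give $w_\alpha \xrightarrow{o_1} 0$ on the net $(w_\alpha)_{\alpha \in A_{\geq \beta}}$. By Proposition \ref{pro:basic_convergences} this implies $w_\alpha \xrightarrow{\tau_o} 0$, and $\tau_o$-continuity of $f$ (with $f(0)=0$ by additivity) yields $f(w_\alpha) \xrightarrow{\tau_o} 0$. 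Since the infimum already exists, Lemma \ref{lem:topologicalconv_inf}(i) gives $\inf\{f(w_\alpha); \alpha \in A_{\geq \beta}\} \leq 0$, which is (ii).

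The subtlety I expect to watch most carefully is keeping the index set $A_{\geq \beta}$ consistent throughout, in particular justifying that $y_\alpha \downarrow 0$ remains valid on $A_{\geq \beta}$; everything else is a direct chaining of Lemma \ref{lem:Ogasawara_RDP}, Dedekind completeness, $\tau_o$-continuity, and Lemma \ref{lem:topologicalconv_inf}(i). Note that $f$ is not assumed monotone, so it is essential here that Lemma \ref{lem:topologicalconv_inf}(i) only requires $\tau_o$-convergence of the image net and existence of the infimum, not monotonicity of $f$.
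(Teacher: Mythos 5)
Your proposal is correct and follows essentially the same route as the paper: Lemma \ref{lem:Ogasawara_RDP} applied with $x=y_\alpha$, $z=y_\beta$ to build $w_\alpha$, order boundedness of $f$ plus Dedekind completeness of $H$ for existence of the infimum, and $o_1$- hence $\tau_o$-convergence of $(w_\alpha)$ to $0$ combined with Lemma \ref{lem:topologicalconv_inf} for the bound by $0$. The only cosmetic difference is that the paper encloses $\{w_\alpha\}$ in $[-y,y]$ rather than $[-y_\beta,y_\beta]$; your extra care about restricting $y_\alpha\downarrow 0$ to the tail $A_{\geq\beta}$ and about not needing monotonicity of $f$ is sound.
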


\begin{proof}
	Let $\beta \in A$ and let $y \in [0,y_\beta]$. 
	For $\alpha \in A_{\geq \beta}$ we have 
	$0 \leq y_\alpha \leq y_\beta$. So $\{y_\alpha,y\} \subseteq [0,y_\beta]$.
	By Lemma \ref{lem:Ogasawara_RDP} there is 
	$w_\alpha \in G$ such that 
	$\pm w_\alpha \leq y_\alpha$, $\pm w_\alpha \leq y$ and 
	$y-w_\alpha \leq y_\beta - y_\alpha$ 
	for every $\alpha \in A_{\geq \beta}$. Thus the net
	$(w_\alpha)_{\alpha \in A_{\geq \beta}}$
	satisfies (i).\\
	Next we will show that $\inf\{f(w_\alpha); \alpha \in A_{\geq \beta}\}$ exists.
	Note that $\{w_\alpha; \alpha\in A_{\geq \beta}\} 
	\subseteq [-y,y]$.
	Since $f$ is order bounded, 
	we know that $\{f(w_\alpha); \alpha\in A_{\geq \beta}\}$
	is order bounded in $H$. Thus the Dedekind completeness of $H$ implies the existence of
	$\inf\{f(w_\alpha); \alpha \in A_{\geq \beta}\}$. \\
	It is left to prove that 
	$\inf\{f(w_\alpha); \alpha \in A_{\geq \beta}\}\leq 0$.
	Note that the net $(y_\alpha)_{\alpha \in A}$ satisfies $y_\alpha \downarrow 0$  and that $\pm w_\alpha \leq y_\alpha$ 
	for every $\alpha \in A_{\geq \beta}$. Thus for the net $(w_\alpha)_{\alpha \in A_{\geq \beta}}$ we have $ w_\alpha\xrightarrow{o_1} 0$, and Proposition \ref{pro:basic_convergences} implies
	$w_\alpha \xrightarrow{\tau_o} 0$. Since $f$ is order continuous, it follows that $f(w_\alpha)\xrightarrow{\tau_o}0$.
	Hence Lemma \ref{lem:topologicalconv_inf} implies 
	$\inf\{f(w_\alpha); \alpha \in A_{\geq \beta}\}\leq 0$.
\end{proof}

Due to Theorem \ref{the:RK_final},
the conditions in the subsequent Proposition \ref{pro:Ogasawara_f+ordercontinuous} and Theorem \ref{the:ogasawara_spaces_are_equal} yield \[\operatorname{A}_{\operatorname{b}}(G,H)=\operatorname{A}_{\operatorname{r}}(G,H).\] 
The operator $f^+$ is the positive part of $f$ in the Dedekind complete lattice-ordered abelian group $\operatorname{A}_{\operatorname{b}}(G,H)$, and $f^-$ is the negative part.

\begin{proposition} \label{pro:Ogasawara_f+ordercontinuous}
	Let $G$ be a directed partially ordered abelian group with the Riesz decomposition property and let $H$ be a Dedekind complete lattice-ordered abelian group.
	If $f \in \operatorname{A}_{\operatorname{b}}^{\tau_o}(G,H)$, then
	$f^+, f^- \in \operatorname{A}_+^{\operatorname{oc}}(G,H)$.
\end{proposition}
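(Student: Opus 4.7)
The plan is to prove $f^+ \in \operatorname{A}_+^{\operatorname{oc}}(G,H)$ directly; once this is done, the statement for $f^-$ follows by applying the same result to $-f$, since $(-f)^+ = f^-$ in a lattice-ordered group and $-f$ lies in $\operatorname{A}_{\operatorname{b}}^{\tau_o}(G,H)$ (negation on $H$ is $\tau_o$-continuous, as it interchanges the roles of the two monotone nets in the definition of a net catching set).

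Since Theorem \ref{the:RK_final} guarantees $f^+ \in \operatorname{A}_+(G,H)$, by the characterisation in Proposition \ref{pro:charAocplus} it suffices to show that $f^+(y_\alpha)\downarrow 0$ for every net $(y_\alpha)_{\alpha\in A}$ in $G$ with $y_\alpha\downarrow 0$. Monotony of $f^+$ already yields $f^+(y_\alpha)\downarrow$ with lower bound $0$; set $m:=\inf_{\alpha\in A} f^+(y_\alpha)$, which exists in $H$ by Dedekind completeness (bounding below by $0$ and above by $f^+(y_{\alpha_0})$ for any fixed $\alpha_0$). The goal then reduces to proving $m\leq 0$.

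Fix $\beta\in A$ and $u\in[0,y_\beta]$. The decisive step is to invoke Lemma \ref{lem:Ogasawara_existencefancynet} with the role of $y$ played by $u$: this produces a net $(w_\alpha)_{\alpha\in A_{\geq\beta}}$ such that $0\leq u-w_\alpha\leq y_\beta-y_\alpha$ for every $\alpha\geq\beta$ and $\inf\{f(w_\alpha);\, \alpha\geq\beta\}\leq 0$. Since $u-w_\alpha\in[0,y_\beta-y_\alpha]$, the Riesz--Kantorovich formula from Remark \ref{rem:RK} gives
\[
f(u-w_\alpha)\leq f^+(y_\beta-y_\alpha)=f^+(y_\beta)-f^+(y_\alpha)\leq f^+(y_\beta)-m,
\]
where additivity of $f^+$ and the definition of $m$ were used. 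Consequently
\[
f(u)=f(u-w_\alpha)+f(w_\alpha)\leq f^+(y_\beta)-m+f(w_\alpha)
\]
for every $\alpha\geq\beta$, and passing to the infimum over $\alpha\geq\beta$ yields $f(u)\leq f^+(y_\beta)-m$.

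Finally, I would take the supremum over $u\in[0,y_\beta]$; by the Riesz--Kantorovich formula for $f^+$ this produces $f^+(y_\beta)\leq f^+(y_\beta)-m$, whence $m\leq 0$, as required. The main obstacle is the step that marries the pointwise bound on $f(u-w_\alpha)$ with the control $\inf f(w_\alpha)\leq 0$: it is exactly here that the Riesz decomposition property of $G$ enters, via the splitting $u=(u-w_\alpha)+w_\alpha$ provided by Lemma \ref{lem:Ogasawara_existencefancynet}, and it is the reason we can first fix $u$, take the infimum over $\alpha$, and only afterwards take the supremum over $u$.
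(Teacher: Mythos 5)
Your proposal is correct and follows essentially the same route as the paper: reduce to Proposition \ref{pro:charAocplus}, fix $\beta$ and $u\in[0,y_\beta]$, invoke Lemma \ref{lem:Ogasawara_existencefancynet}, bound $f(u-w_\alpha)$ by $f^+(y_\beta)-f^+(y_\alpha)$, take the infimum over $\alpha$ and then the supremum over $u$ via the Riesz--Kantorovich formula. The only cosmetic difference is that you work with $m=\inf_\alpha f^+(y_\alpha)$ where the paper works with an arbitrary lower bound $z$ of $\{f^+(y_\alpha)\}$; the inequalities are the same rearranged.
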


\begin{proof}
	Let $f \in \operatorname{A}_{\operatorname{b}}^{\tau_o}(G,H)$. We will use Proposition \ref{pro:charAocplus} to show 	$f^+ \in \operatorname{A}_+^{\operatorname{oc}}(G,H)$. Let
	$(y_\alpha)_{\alpha \in A}$ be a net in $X$ such that $y_\alpha \downarrow 0$. From the monotony of $f^+$ it follows that $f^+(y_\alpha)\downarrow $ and that $f^+(y_\alpha)\geq 0$ for every $\alpha \in A$. 
	
	To show that
	$\inf \{f^+(y_\alpha); \alpha \in A\}=0$, let $z$ be a lower bound of $\{f^+(y_\alpha); \alpha \in A\}$.
	Fix $\beta \in A$ and $y \in [0,y_\beta]$. By Lemma
	\ref{lem:Ogasawara_existencefancynet} there is a net
	$(w_\alpha)_{\alpha \in A_{\geq \beta}}$
	in $G$ such that $0 \leq y-w_\alpha \leq y_\beta -y_\alpha$ for every 
	$\alpha \in A_{\geq \beta}$ and such that
	$\inf\{f(w_\alpha); \alpha \in A_{\geq \beta}\}$ exists
	and satisfies 
	$\inf\{f(w_\alpha); \alpha \in A_{\geq \beta}\}\leq 0$.
	For $\alpha \in A_{\geq \beta}$ we can use 
	$0 \leq y-w_\alpha \leq y_\beta -y_\alpha$ to see
	\begin{align*}
	f(y) - f(w_\alpha) = f(y-w_\alpha)\leq f^+(y- w_\alpha)\leq f^+(y_\beta-y_\alpha)= f^+(y_\beta)-f^+(y_\alpha).
	\end{align*}
	Therefore we have shown that
	\begin{align*}
	z \leq f^+ (y_\alpha)
	\leq f^+ (y_\beta) - f(y) + f(w_\alpha)
	\end{align*}
	for every $\alpha \in A_{\geq \beta}$.
	Thus 
	\begin{align*}
	z \leq  f^+ (y_\beta) - f(y) + 
\inf\{f(w_\alpha); \alpha \in A_{\geq \beta}\} \leq  f^+ (y_\beta) - f(y) + 0.
	\end{align*}
	The infimum over $y$ yields
	\begin{align*}
	z &\leq  f^+ (y_\beta) + \inf\{-f(y); y \in [0,y_\beta]\}=  f^+ (y_\beta) - \sup\{f(y); y \in [0,y_\beta]\}\\&= f^+ (y_\beta) - f^+ (y_\beta)=0.
	\end{align*}
	We conclude $\inf \{f^+(y_\alpha); \alpha \in A\}=0$, hence $f^+ \in \operatorname{A}_+^{\operatorname{oc}}(G,H)$. 
	
	Since for $f \in \operatorname{A}_{\operatorname{b}}^{\tau_o}(G,H)$ we have that 
	$-f \in \operatorname{A}_{\operatorname{b}}^{\tau_o}(G,H)$, we obtain 
	$f^-=(-f)^+ \in \operatorname{A}_+^{\operatorname{oc}}(G,H)$.
\end{proof}	

Now we are in a position to present the main results of the present paper in the subsequent two theorems.

\begin{theorem}\label{the:ogasawara_spaces_are_equal}
	Let $G$  be a directed  partially ordered abelian group that satisfies  the Riesz decomposition property and let $H$ be a Dedekind complete lattice-ordered abelian group. Then
	\begin{align*}
\operatorname{A}_{\operatorname{b}}^{o_1}(G,H)&=\operatorname{A}_{\operatorname{b}}^{o_2}(G,H)=\operatorname{A}_{\operatorname{b}}^{o_3}(G,H)=\operatorname{A}_{\operatorname{b}}^{\tau_o}(G,H)\\&=\operatorname{A}_{+}^{\operatorname{oc}}(G,H)-\operatorname{A}_{+}^{\operatorname{oc}}(G,H).\end{align*} 
	 
	\end{theorem}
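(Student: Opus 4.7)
The plan is to close a cycle of inclusions among the five sets listed, using the preceding machinery. Write $\mathcal{R}:=\operatorname{A}_{+}^{\operatorname{oc}}(G,H)-\operatorname{A}_{+}^{\operatorname{oc}}(G,H)$ for the last set. The strategy is to show, for each $i\in\{1,2,3\}$,
\[
\mathcal{R}\subseteq \operatorname{A}_{\operatorname{b}}^{o_i}(G,H)\subseteq \operatorname{A}_{\operatorname{b}}^{\tau_o}(G,H)\subseteq \mathcal{R},
\]
which will immediately imply equality of all five sets.

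For the first inclusion, equation \eqref{equ:def_cone_ocont_maps} gives
$\operatorname{A}_{+}^{\operatorname{oc}}(G,H)\subseteq \operatorname{A}_{\operatorname{b}}^{o_i}(G,H)$ for every $i\in\{1,2,3\}$. Since by Proposition \ref{pro:full_subgroup} the set $\operatorname{A}_{\operatorname{b}}^{o_i}(G,H)$ is a subgroup of $\operatorname{A}(G,H)$, differences of positive order continuous maps remain $o_i$-continuous, i.e.\ $\mathcal{R}\subseteq \operatorname{A}_{\operatorname{b}}^{o_i}(G,H)$.

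The middle inclusion $\operatorname{A}_{\operatorname{b}}^{o_i}(G,H)\subseteq \operatorname{A}_{\operatorname{b}}^{\tau_o}(G,H)$ is precisely the content of Theorem \ref{thm:ordercontinuous}.

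For the third inclusion, let $f\in \operatorname{A}_{\operatorname{b}}^{\tau_o}(G,H)$. By Theorem \ref{the:RK_final}, $\operatorname{A}_{\operatorname{b}}(G,H)$ is a Dedekind complete lattice-ordered abelian group, so the positive and negative parts $f^+,f^-\in\operatorname{A}_{\operatorname{b}}(G,H)$ exist and satisfy $f=f^+-f^-$. Proposition \ref{pro:Ogasawara_f+ordercontinuous} (the technical heart, already proven above and the only non-trivial step in this theorem) yields $f^+,f^-\in\operatorname{A}_{+}^{\operatorname{oc}}(G,H)$, so $f\in \mathcal{R}$. Chaining the three inclusions produces the desired chain of equalities, and no further argument is needed; the main work has been absorbed into Proposition \ref{pro:Ogasawara_f+ordercontinuous} and the Riesz--Kantorovich Theorem \ref{the:RK_final}.
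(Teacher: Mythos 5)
Your proposal is correct and follows essentially the same route as the paper: the cycle $\operatorname{A}_{+}^{\operatorname{oc}}-\operatorname{A}_{+}^{\operatorname{oc}}\subseteq \operatorname{A}_{\operatorname{b}}^{o_i}\subseteq \operatorname{A}_{\operatorname{b}}^{\tau_o}\subseteq \operatorname{A}_{+}^{\operatorname{oc}}-\operatorname{A}_{+}^{\operatorname{oc}}$ via Proposition \ref{pro:full_subgroup}, Theorem \ref{thm:ordercontinuous} and Proposition \ref{pro:Ogasawara_f+ordercontinuous} is exactly the paper's argument. You are slightly more explicit than the paper in citing \eqref{equ:def_cone_ocont_maps} and Theorem \ref{the:RK_final} for the decomposition $f=f^+-f^-$, which is a harmless improvement.
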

\begin{proof}
	Let $i\in\{1,2,3\}$.
	By Theorem \ref{thm:ordercontinuous} we have \[\operatorname{A}_{\operatorname{b}}^{o_i}(G,H)\subseteq\operatorname{A}_{\operatorname{b}}^{\tau_o}(G,H).\]
Proposition \ref{pro:Ogasawara_f+ordercontinuous} implies that \[\operatorname{A}_{\operatorname{b}}^{\tau_o}(G,H)\subseteq\operatorname{A}_{+}^{\operatorname{oc}}(G,H)-\operatorname{A}_{+}^{\operatorname{oc}}(G,H).\]
By Proposition \ref{pro:full_subgroup} the set $\operatorname{A}_{\operatorname{b}}^{o_i}(G,H)$ is a subgroup of $\operatorname{A}_{\operatorname{b}}(G,H)$, hence \[\operatorname{A}_{+}^{\operatorname{oc}}(G,H)-\operatorname{A}_{+}^{\operatorname{oc}}(G,H)\subseteq\operatorname{A}_{\operatorname{b}}^{o_i}(G,H).\]
\end{proof}

\begin{theorem}
	\label{the:ogasawara_part2}
	Let $G$  be a directed  partially ordered abelian group that satisfies  the Riesz decomposition property and let $H$ be a Dedekind complete lattice-ordered abelian group. Then 	
$\operatorname{A}_{\operatorname{b}}^{\tau_o}(G,H)$ is an order closed ideal in $\operatorname{A}_{\operatorname{b}}(G,H)$.
\end{theorem}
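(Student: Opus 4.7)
The plan is to package the results already established. By Theorem \ref{the:ogasawara_spaces_are_equal} we have
\[
M := \operatorname{A}_{\operatorname{b}}^{\tau_o}(G,H) = \operatorname{A}_{+}^{\operatorname{oc}}(G,H)-\operatorname{A}_{+}^{\operatorname{oc}}(G,H) = \operatorname{A}_{\operatorname{b}}^{o_1}(G,H),
\]
and I will use these two descriptions in parallel. Via the second identification, Proposition \ref{pro:full_subgroup} shows that $M$ is a full subgroup of $\operatorname{A}(G,H)$, hence also of $\operatorname{A}_{\operatorname{b}}(G,H)$. The first identification exhibits $M$ as the difference of a subset of its positive cone with itself, so $M$ is directed. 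Thus $M$ is a directed full subgroup of $\operatorname{A}_{\operatorname{b}}(G,H)$, i.e.\ an ideal.

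Next I argue that $M$ is order closed in $\operatorname{A}_{\operatorname{b}}(G,H)$. By Theorem \ref{the:RK_final}, $\operatorname{A}_{\operatorname{b}}(G,H)$ is a lattice-ordered abelian group, so Proposition \ref{pro:McapG+oclosed} reduces the task to verifying that $M$ is closed under the lattice operations of $\operatorname{A}_{\operatorname{b}}(G,H)$ and that $M\cap \operatorname{A}_{\operatorname{b}}(G,H)_+$ is order closed. The positive cone of $\operatorname{A}_{\operatorname{b}}(G,H)$ coincides with $\operatorname{A}_+(G,H)$ (every monotone additive map is order bounded), so by equation \eqref{equ:def_cone_ocont_maps} we have $M\cap \operatorname{A}_{\operatorname{b}}(G,H)_+ = \operatorname{A}_+^{\operatorname{oc}}(G,H)$, which is order closed by Proposition \ref{pro:Aoc+oclosed}. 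For closure under the lattice operations, given $f,g\in M$, I will use the standard identity $f\vee g = g + (f-g)^+$ valid in any lattice-ordered abelian group: since $f-g\in M\subseteq \operatorname{A}_{\operatorname{b}}^{\tau_o}(G,H)$, Proposition \ref{pro:Ogasawara_f+ordercontinuous} yields $(f-g)^+\in \operatorname{A}_+^{\operatorname{oc}}(G,H)\subseteq M$, and the subgroup property then gives $f\vee g\in M$; the argument for $f\wedge g$ is analogous.

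I do not foresee a substantive obstacle here, as every ingredient has been assembled in the preceding results. The present statement is essentially a synthesis of Proposition \ref{pro:full_subgroup} (subgroup and fullness), Proposition \ref{pro:Aoc+oclosed} (order closedness of the positive part), Proposition \ref{pro:Ogasawara_f+ordercontinuous} (that the positive part of an order continuous map is again order continuous), Theorem \ref{the:ogasawara_spaces_are_equal} (the decomposition of $M$) and Theorem \ref{the:RK_final} (the lattice structure on $\operatorname{A}_{\operatorname{b}}(G,H)$), glued together via Proposition \ref{pro:McapG+oclosed}.
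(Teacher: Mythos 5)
Your proposal is correct and follows essentially the same route as the paper's proof: fullness via Proposition \ref{pro:full_subgroup} together with Theorem \ref{the:ogasawara_spaces_are_equal}, closure under the lattice operations via Proposition \ref{pro:Ogasawara_f+ordercontinuous}, and order closedness by combining Theorem \ref{the:ogasawara_spaces_are_equal}, Proposition \ref{pro:Aoc+oclosed} and Proposition \ref{pro:McapG+oclosed}. The only cosmetic difference is that you obtain directedness from the decomposition $\operatorname{A}_{+}^{\operatorname{oc}}(G,H)-\operatorname{A}_{+}^{\operatorname{oc}}(G,H)$ while the paper reads it off from closure under the lattice operations; both are immediate.
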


\begin{proof}
	From Proposition \ref{pro:full_subgroup} and Theorem \ref{the:ogasawara_spaces_are_equal} it follows that $\operatorname{A}_{\operatorname{b}}^{\tau_o}(G,H)$ is a full subgroup of $\operatorname{A}_{\operatorname{b}}(G,H)$.  Proposition \ref{pro:Ogasawara_f+ordercontinuous} implies that $\operatorname{A}_{\operatorname{b}}^{\tau_o}(G,H)$ is closed under the lattice operations in $\operatorname{A}_{\operatorname{b}}(G,H)$. In particular, $\operatorname{A}_{\operatorname{b}}^{\tau_o}(G,H)$ is directed, i.e.\ it is an ideal. 
	
	Combining Theorem \ref{the:ogasawara_spaces_are_equal}, Proposition \ref{pro:Aoc+oclosed} and
	Proposition \ref{pro:McapG+oclosed}, 
	we conclude that $\operatorname{A}_{\operatorname{b}}^{\tau_o}(G,H)$ is order closed.
	\end{proof}

	Theorem \ref{the:ogasawara_part2} is a generalisation of a theorem by Ogasawara \cite{Ogasawara1944} (see also \cite[Theorem 4.4]{Positiveoperators_old}) for $o_1$-continuous operators on vector lattices. 
	
	The following slight generalisation of \cite[Proposition 1.6]{Abra} is obtained due to Theorem \ref{the:ogasawara_spaces_are_equal}.
	
	\begin{corollary}
		\label{cor:orderboundedando2contiso3cont}
		Let $G$ be a directed partially ordered abelian group that satisfies the Riesz decomposition property and let $H$ be an Archimedean lattice-ordered abelian group. Then
		$\operatorname{A}_{\operatorname{b}}^{o_2}(G,H)\subseteq \operatorname{A}_{\operatorname{b}}^{o_3}(G,H)$.
	\end{corollary}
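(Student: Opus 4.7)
The plan is to reduce to the Dedekind-complete case by embedding $H$ into its Dedekind completion, apply the equality of the continuity classes from Theorem \ref{the:ogasawara_spaces_are_equal}, and then pull the $o_3$-convergence back through the embedding using Proposition \ref{pro:o3conv_inlatticeandDedeindcmpletion}.

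In more detail, I would first invoke the classical fact that every Archimedean lattice-ordered abelian group $H$ admits a Dedekind completion, i.e.\ a Dedekind complete lattice-ordered abelian group $H^\delta$ together with a group-and-lattice embedding $J\colon H \to H^\delta$ whose image is order dense in $H^\delta$. This is the step I expect to require the most care in the write-up, since it is the only ingredient that is not developed explicitly in the preceding sections; depending on the level of generality the authors prefer, it may need either a short construction or a reference to the literature (e.g.\ Fuchs or Birkhoff). Note that $J$ is an order embedding with order dense image, hence by Corollary \ref{coro:orderembeddingswithroderdenseimagesarecontinuous} it is $o_i$-continuous for all $i\in\{1,2,3\}$.

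Next, given $f\in\operatorname{A}_{\operatorname{b}}^{o_2}(G,H)$, I would consider the composition $g:=J\circ f\colon G\to H^\delta$. It is clearly additive and order bounded (since $J$ is monotone and $f$ is order bounded), and it is $o_2$-continuous as a composition of two $o_2$-continuous maps: indeed, for $x_\alpha\xrightarrow{o_2}x$ in $G$, the $o_2$-continuity of $f$ gives $f(x_\alpha)\xrightarrow{o_2}f(x)$ in $H$, and the $o_2$-continuity of $J$ then gives $g(x_\alpha)\xrightarrow{o_2}g(x)$ in $H^\delta$. Therefore $g\in\operatorname{A}_{\operatorname{b}}^{o_2}(G,H^\delta)$, and since $G$ is directed with the Riesz decomposition property and $H^\delta$ is a Dedekind complete lattice-ordered abelian group, Theorem \ref{the:ogasawara_spaces_are_equal} yields $g\in\operatorname{A}_{\operatorname{b}}^{o_3}(G,H^\delta)$.

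Finally, to transfer $o_3$-continuity from $g$ back to $f$, let $(x_\alpha)_{\alpha\in A}$ be a net in $G$ with $x_\alpha\xrightarrow{o_3}x\in G$. Then $J(f(x_\alpha))=g(x_\alpha)\xrightarrow{o_3}g(x)=J(f(x))$ in $H^\delta$. Since $H$ is a lattice-ordered abelian group (in particular a lattice) and $J\colon H\to H^\delta$ is an order embedding with $J[H]$ order dense in $H^\delta$, Proposition \ref{pro:o3conv_inlatticeandDedeindcmpletion} applies and yields $f(x_\alpha)\xrightarrow{o_3}f(x)$ in $H$. Thus $f\in\operatorname{A}_{\operatorname{b}}^{o_3}(G,H)$, which completes the argument.
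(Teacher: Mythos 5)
Your proof is correct and follows essentially the same route as the paper: embed $H$ into a Dedekind complete lattice-ordered abelian group with order dense image, apply Theorem \ref{the:ogasawara_spaces_are_equal} to $J\circ f$, and pull $o_3$-convergence back to $H$. The only (immaterial) difference is that for the last step the paper chains Proposition \ref{pro:o2_o3_convergenceDedekind_complete_lattice} with Proposition \ref{pro:Dedekindcompletionando3o2}(ii), while you invoke Proposition \ref{pro:o3conv_inlatticeandDedeindcmpletion}, which packages the same argument; the existence of the group Dedekind completion that you flag is indeed handled by the paper only in a footnote adapting \cite[Theorem IV.11.1]{Vulikh67}.
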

	\begin{proof}
		Let $f \in \operatorname{A}_{\operatorname{b}}^{o_2}(G,H)$ and $(x_\alpha)_{\alpha \in A}$ a net in $G$ such that $x_\alpha \xrightarrow{o_3}x\in G$. Furthermore let $(H^\gamma,J)$ be the group Dedekind completion\footnote{A slight adaptation of arguments given in \cite[Theorem IV.11.1]{Vulikh67} yields that for every Archimedean lattice-ordered abelian group $G$ there is a Dedekind complete lattice-ordered abelian group $G^\gamma$ and an additive order embedding $J\colon G\to G^\gamma$ such that $J[G]$ is order dense in $G^\gamma$. We say that $(G^\gamma,J)$ is the \emph{group Dedekind completion} of $G$.} of $H$.
		Due to Corollary \ref{coro:orderembeddingswithroderdenseimagesarecontinuous} the map $J$ is $o_2$-continuous, hence also $J\circ f\colon G \to H^\gamma$. Since $J\circ f$ is order bounded, Theorem \ref{the:ogasawara_spaces_are_equal} yields $J\circ f\in \operatorname{A}_{\operatorname{b}}^{o_2}(G,H^\gamma)= \operatorname{A}_{\operatorname{b}}^{o_3}(G,H^\gamma)$. Thus $J(f(x_\alpha))\xrightarrow{o_3}J(f(x))$ in $H^\gamma$. Now Proposition \ref{pro:o2_o3_convergenceDedekind_complete_lattice} yields $J(f(x_\alpha))\xrightarrow{o_2}J(f(x))$ in $H^\gamma$. Thus Proposition \ref{pro:Dedekindcompletionando3o2}(ii) shows that $f(x_\alpha) \xrightarrow{o_3}f(x)$.
	\end{proof}

\section{Order convergence and order topology in partially ordered vector spaces} 

In this section let $X$ be a partially ordered vector space. We will show that for $i\in \{1,2,3\}$ the scalar multiplication is jointly continuous with respect to $o_i$-convergence on $X$ and $\mathbb{R}$, respectively, if and only if $X$ is Archimedean and directed. Examples are presented in which the order convergence concepts differ. 

\begin{lemma} \label{lem:characterisationArchimedeananddirectedwithorderconvergence}
	Let $i\in\{1,2,3\}$. Then the following statements are equivalent.
	\begin{itemize}
		\item[(i)] For every $x \in X$ the sequence $(\frac{1}{n}x)_{n \in \N}$ satisfies $\frac{1}{n}x\xrightarrow{o_i}0$.
		\item[(ii)] For every $x \in X$
		the sequence $(\frac{1}{n}x)_{n \in \N}$ satisfies $\frac{1}{n}x\xrightarrow{\tau_o}0$.
		\item[(iii)] $X$ is Archimedean and directed.
	\end{itemize}
	\end{lemma}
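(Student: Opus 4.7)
The strategy is to close a cycle (i) $\Rightarrow$ (ii) $\Rightarrow$ (iii) $\Rightarrow$ (i), so each implication is only established once. The direction (i) $\Rightarrow$ (ii) is immediate: by Proposition \ref{pro:basic_convergences}, $o_1$-, $o_2$- and $o_3$-convergence all imply $\tau_o$-convergence, so it holds uniformly in $i\in\{1,2,3\}$.

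For (iii) $\Rightarrow$ (i), assume $X$ is Archimedean and directed. For $x\in X_+$ the Archimedean property gives $\frac{1}{n}x\downarrow 0$, hence $\frac{1}{n}x\xrightarrow{o_1}0$ by Remark \ref{rem:decreasingnet}(a). For a general $x\in X$, directedness provides $u,v\in X_+$ with $x=u-v$, and the elementary inequality $\pm(u-v)\leq u+v$ yields $\pm\tfrac{1}{n}x\leq \tfrac{1}{n}(u+v)$ for all $n$. Since $u+v\in X_+$, the Archimedean property again supplies $\tfrac{1}{n}(u+v)\downarrow 0$, and Proposition \ref{pro:char_o_i_poag}(i) then delivers $\frac{1}{n}x\xrightarrow{o_1}0$. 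As $o_1$-convergence implies $o_2$- and $o_3$-convergence, this takes care of all $i$.

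The heart of the proof is (ii) $\Rightarrow$ (iii), for which the idea is to exploit the interplay between the topology $\tau_o$ and the subgroup/cone structure. For the Archimedean property, fix $x\in X_+$; then $(\frac{1}{n}x)_{n\in\N}$ is a decreasing sequence lying in $X_{\geq 0}$ and $\tau_o$-converging to $0$, so Lemma \ref{lem:topologicalconv_inf}(ii) forces $\inf\{\frac{1}{n}x;\,n\in\N\}=0$, i.e.\ $\frac{1}{n}x\downarrow 0$; by the preliminaries this means $X$ is Archimedean. For directedness, note that $X_+-X_+$ is order open by Remark \ref{rem:G+-G+orderopen}(c) and contains $0$; thus for an arbitrary $x\in X$ the hypothesis $\frac{1}{n}x\xrightarrow{\tau_o}0$ produces some $N\in\N$ with $\frac{1}{N}x\in X_+-X_+$, and scaling by $N$ shows $x\in X_+-X_+$, so $X=X_+-X_+$.

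The expected main obstacle is precisely this last step, namely extracting the \emph{algebraic} conclusion of directedness from a purely \emph{topological} hypothesis. The clean way through is the observation that $X_+-X_+$ is simultaneously a subgroup and an order-open neighbourhood of $0$, so a single tail condition from $\tau_o$-convergence, combined with closure of $X_+-X_+$ under scalar multiplication, suffices.
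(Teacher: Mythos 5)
Your proof is correct and follows essentially the same cycle (i)$\Rightarrow$(ii)$\Rightarrow$(iii)$\Rightarrow$(i) as the paper, using the same key ingredients (Proposition \ref{pro:basic_convergences}, Lemma \ref{lem:topologicalconv_inf}(ii), and the order-openness of $X_+-X_+$ from Remark \ref{rem:G+-G+orderopen}(c)). The only cosmetic difference is in (iii)$\Rightarrow$(i), where you dominate $\pm\frac{1}{n}x$ by $\frac{1}{n}(u+v)$ and invoke Proposition \ref{pro:char_o_i_poag}(i), whereas the paper adds the two $o_i$-convergent sequences $\frac{1}{n}x_1$ and $-\frac{1}{n}x_2$ via Proposition \ref{pro:plus_o_i}; both are valid.
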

	\begin{proof}
		The implication (i)$\Rightarrow$(ii) follows from Proposition \ref{pro:basic_convergences}.
		To show (ii)$\Rightarrow$(iii),
		we first establish $X_+$ to be generating in $X$. Let $x\in X$, then for the sequence $(\frac{1}{n}x)_{n \in \N}$		
		we have $\frac{1}{n}x\xrightarrow{\tau_o}0$, hence Remark \ref{rem:G+-G+orderopen} (c) shows the existence of $n\in\N$ with $\frac{1}{n}x\in X_+-X_+$. Since $X_+-X_+$ is a vector space, we obtain $x\in X_+-X_+$. 
		
		To show that $X$ is Archimedean, let $x\in X_+$. By (ii), we have $\frac{1}{n}x\xrightarrow{\tau_o}0$. Since $(\frac{1}{n}x)\downarrow$, Lemma \ref{lem:topologicalconv_inf} proves $(\frac{1}{n}x)\downarrow 0$.
		
		Next we show (iii)$\Rightarrow$(i).
		Let $x\in X$. By the directedness of $X$, we have  
	$x_1,x_2 \in X_+ $ with $x=x_1-x_2$. Since $X$ is Archimedean, we get 
	$\frac{1}{n}x_j \downarrow 0$ for $j \in \{1,2\}$. 
	Thus $\frac{1}{n}x_j \xrightarrow{o_i} 0$ by Remark \ref{rem:decreasingnet} and Proposition \ref{pro:basic_convergences}. 
	Hence Proposition
	\ref{pro:plus_o_i} implies $\frac{1}{n}x=\frac{1}{n}x_1-\frac{1}{n}x_2\xrightarrow{o_i}0$.
	 \end{proof}

\begin{proposition} \label{pro:characterisation_o_icontinuousscalarmultiplication}
	Let $i \in \{1,2,3\}$. 
	Then the following statements are equivalent.
	\begin{itemize}
		\item[(i)] $X$ is Archimedean and directed.
\item[(ii)] For every net $(\lambda_\alpha)_{\alpha\in A}$ in $\mathbb{R}$ with $\lambda_\alpha\xrightarrow{o_i}\lambda\in \mathbb{R}$ and every net $(x_\beta)_{\beta\in B}$ in $X$ with
$x_\beta\xrightarrow{o_i}x\in X$
 the net  $(\lambda_\alpha x_\beta)_{(\alpha,\beta) \in A\times B}$
 satisfies $\lambda_\alpha x_\beta\xrightarrow{o_i}\lambda x$
  (where $A\times B$ is ordered component-wise).
 
	\item[(iii)] 
	For every net $(\lambda_\alpha)_{\alpha\in A}$ in $\mathbb{R}$ with $\lambda_\alpha\xrightarrow{o_i}\lambda\in \mathbb{R}$ and every net $(x_\alpha)_{\alpha\in A}$ in $X$ with
	$x_\alpha\xrightarrow{o_i}x\in X$
	the net  $(\lambda_\alpha x_\alpha)_{\alpha \in A}$
	satisfies $\lambda_\alpha x_\alpha\xrightarrow{o_i}\lambda x$.
		\end{itemize}
	\end{proposition}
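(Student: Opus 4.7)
The plan is to establish the chain (ii)$\Rightarrow$(iii)$\Rightarrow$(i) through essentially formal arguments and then to tackle the substantive implication (i)$\Rightarrow$(ii). For (ii)$\Rightarrow$(iii), I would observe that the diagonal map $\alpha\mapsto(\alpha,\alpha)$ exhibits $(\lambda_\alpha x_\alpha)_{\alpha\in A}$ as a subnet of $(\lambda_\alpha x_\beta)_{(\alpha,\beta)\in A\times A}$, so Remark \ref{rem:subnet_o_i} transfers the $o_i$-convergence supplied by (ii). For (iii)$\Rightarrow$(i), I would apply (iii) to the $\mathbb{N}$-indexed nets $\lambda_n:=1/n$ (which satisfies $1/n\downarrow 0$ in $\mathbb{R}$, hence $1/n\xrightarrow{o_i}0$ by Remark \ref{rem:decreasingnet} and Proposition \ref{pro:basic_convergences}) and $x_n:=x$ constant, for arbitrary $x\in X$. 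The resulting $\frac{1}{n}x\xrightarrow{o_i}0$ then yields, via Lemma \ref{lem:characterisationArchimedeananddirectedwithorderconvergence}, that $X$ is Archimedean and directed.

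For (i)$\Rightarrow$(ii), assume $X$ is Archimedean and directed, write $\mu_\alpha:=\lambda_\alpha-\lambda$ and $z_\beta:=x_\beta-x$ (so that $\mu_\alpha\xrightarrow{o_i}0$ and $z_\beta\xrightarrow{o_i}0$ by Proposition \ref{pro:char_o_i_poag}(iv)), and exploit the identity
\[\lambda_\alpha x_\beta-\lambda x=\mu_\alpha z_\beta+\lambda z_\beta+\mu_\alpha x.\]
After lifting each summand to the common product index set $A\times B$ via the projection subnet principle (Remark \ref{rem:subnet_o_i}) and applying Proposition \ref{pro:plus_o_i} together with the diagonal subnet trick, the problem reduces to three separate convergences: (a) $\mu_\alpha y\xrightarrow{o_i}0$ for every fixed $y\in X$; (b) $\mu z_\beta\xrightarrow{o_i}0$ for every fixed $\mu\in\mathbb{R}$; and (c) $\mu_\alpha z_\beta\xrightarrow{o_i}0$ on $A\times B$. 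In each case I would invoke the symmetric characterisation in Proposition \ref{pro:char_o_i_poag} to produce dominating nets $\check{\mu}_\cdot\downarrow 0$ in $\mathbb{R}_+$ with $|\mu_\alpha|\leq\check{\mu}_\cdot$ and $\check{z}_\cdot\downarrow 0$ in $X_+$ with $\pm z_\beta\leq\check{z}_\cdot$ (eventually, with auxiliary index sets in the $o_3$ case). The remaining task is to show that the relevant product of a scalar and a vector dominating net decreases to $0$: for (a), directedness permits writing $y=y_+-y_-$ with $y_\pm\in X_+$, after which Archimedean-ness of $X$ gives $\check{\mu}_\cdot y_\pm\downarrow 0$ (any lower bound $w$ satisfies $w\leq(1/n)y_\pm$ for every $n\in\mathbb{N}$, hence $w\leq 0$); for (b), $\mu\check{z}_\cdot\downarrow 0$ follows from $\check{z}_\cdot\downarrow 0$ by rescaling; for (c), a case split on $\sgn(\mu_\alpha)$ yields the bilinear bound $-\check{\mu}_\cdot\check{z}_\cdot\leq\mu_\alpha z_\beta\leq\check{\mu}_\cdot\check{z}_\cdot$, and $\check{\mu}_\cdot\check{z}_\cdot\downarrow 0$ on the product index set follows by fixing some $\delta_0$ with $\check{\mu}_{\delta_0}>0$ and dividing, which reduces to $\check{z}_\cdot\downarrow 0$.

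The main obstacle is step (c): since $X$ is not assumed to be a lattice, $z_\beta$ cannot be split into positive and negative parts, so the sign analysis has to be performed on $\mu_\alpha$ alone; combined with the bookkeeping for two independent index sets and, in the $o_3$ case, the auxiliary index sets $D$ and $F$ furnished by Proposition \ref{pro:char_o_i_poag}(iii), this is the technically delicate portion. The Archimedean hypothesis enters the proof only through the infimum arguments for the scalar-times-vector products in (a), and the whole argument applies uniformly to $o_1$, $o_2$ and $o_3$ with only routine changes in quantifier structure.
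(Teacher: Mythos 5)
Your proposal is correct and follows essentially the same route as the paper: (ii)$\Rightarrow$(iii) by the diagonal subnet of Remark \ref{rem:subnet_o_i}, (iii)$\Rightarrow$(i) via Lemma \ref{lem:characterisationArchimedeananddirectedwithorderconvergence}, and (i)$\Rightarrow$(ii) by extracting dominating nets from Proposition \ref{pro:char_o_i_poag}, using directedness to majorise $x$ and the Archimedean property for the scalar-times-fixed-vector term. Your three summands $\mu_\alpha z_\beta+\lambda z_\beta+\mu_\alpha x$ correspond exactly to the paper's single dominating net $\check{\lambda}_\alpha\check{x}_\beta+\check{\lambda}_\alpha\check{x}+|\lambda|\check{x}_\beta$, so the difference is only one of packaging.
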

	\begin{proof}
	 To show (i)$\Rightarrow$(ii), let  
	$(\lambda_\alpha)_{\alpha\in A}$ be a net in $\mathbb{R}$ with $\lambda_\alpha\xrightarrow{o_1}\lambda\in \mathbb{R}$ and let  $(x_\beta)_{\beta\in B}$ be a net in $X$ with
	$x_\beta\xrightarrow{o_1}x\in X$.
	According to Proposition \ref{pro:char_o_i_poag}, there is a net $(\check{\lambda}_\alpha)_{\alpha\in A}$ in $\mathbb{R}$ with $\check{\lambda}_\alpha\downarrow 0$ and $\pm (\lambda_\alpha-\lambda)\leq \check{\lambda}_\alpha$ for every $\alpha\in A$, and  a net $(\check{x}_\beta)_{\beta\in B}$ in $X$ with $\check{x}_\beta\downarrow 0$ and $\pm (x_\beta-x)\leq \check{x}_\beta$ for every $\beta\in B$.
	Since $X$ is directed, there is $\check{x}\in X$ with $\pm x\leq \check{x}$. 
	The net 
	$(\check{\lambda}_\alpha \check{x})_{(\alpha,\beta)\in A\times B}$ is a subnet of $(\check{\lambda}_\alpha \check{x})_{\alpha\in A}$, hence  $X$ being Archimedean implies that
	$\check{\lambda}_\alpha \check{x}\downarrow 0$. 
	A straightforward argument shows that the net 
	$(\check{\lambda}_\alpha \check{x}_\beta +\check{\lambda}_\alpha \check{x}+|\lambda|\check{x}_\beta)_{(\alpha,\beta)\in A\times B}$ satisfies $\check{\lambda}_\alpha \check{x}_\beta +\check{\lambda}_\alpha \check{x}+|\lambda|\check{x}_\beta\downarrow 0$.
	 For $(\alpha,\beta)\in A\times B$ we have $\pm \lambda_\alpha\leq \check{\lambda}_\alpha \mp \lambda\leq \check{\lambda}_\alpha+|\lambda|$
	 and hence
	\begin{align*}
	\pm(\lambda_\alpha x_\beta-\lambda x)= \pm \lambda_\alpha (x_\beta- x)\pm (\lambda_\alpha-\lambda)x\leq  (\check{\lambda}_\alpha+|\lambda|)\check{x}_\beta +\check{\lambda}_\alpha \check{x},
	\end{align*}
	such that
	the net  $(\lambda_\alpha x_\beta)_{(\alpha,\beta) \in A\times B}$
	satisfies $\lambda_\alpha x_\beta\xrightarrow{o_1}\lambda x$.
	The arguments for $o_2$-convergence and $o_3$-convergence are similar.
	\\	
	The implication (ii)$\Rightarrow$(iii) follows from Remark \ref{rem:subnet_o_i}. 
	\\
	By Lemma \ref{lem:characterisationArchimedeananddirectedwithorderconvergence} we obtain (iii)$\Rightarrow$(i). 
\end{proof}

Next we present an example of a vector lattice in which  $\tau_o$-convergence and $o_3$-convergence do not coincide.

\begin{example} \label{exa:ordertopconvergentnetnoto3convergent}
	Let $X$ be the vector lattice of all real, Lebesgue-measurable, almost everywhere finite functions on $[0,1]$. As usual, we identify almost everywhere equal functions and order $X$ component-wise almost everywhere. Let $(f_n)_{n \in \N}$ be the sequence of characteristic functions of the intervals \[\textstyle [0,1],[0,\frac{1}{2}],[\frac{1}{2},1],[0,\frac{1}{4}],[\frac{1}{4},\frac{2}{4}],[\frac{2}{4},\frac{3}{4}],[\frac{3}{4},1],[0,\frac{1}{8}],\ldots \] 

 The sequence $(f_n)_{n \in \N}$ does not $o_3$-converge to $0$. Indeed, assume $f_n \xrightarrow{o_3}0$. By Proposition \ref{pro:char_o_i_poag} there is a net $(\check{f}_\alpha)_{\alpha \in A}$ in $X$ with $\check{f}_\alpha \downarrow 0$ and a map $\eta\colon A \rightarrow \N$ such that $\pm f_n \leq \check{f}_\alpha$ for all $\alpha \in A$ and $n \in \N_{\geq \eta(\alpha)}$.	
	To obtain a contradiction  note that 
	$1 =\sup \{f_n; \, n \in \N_{\geq \eta(\alpha)}\}\leq \check{f}_\alpha$ for all $\alpha \in A$. 
	
	We show that $f_n \xrightarrow{\tau_o}0$.
	Let $V\subseteq X$ be order open such that $0\in V$. For $t\in[0,1]$ and $\varepsilon\in \mathbb{R}_{>0}$ let $g^{(t)}_\varepsilon$ be the characteristic function of the interval $[0,1]\cap\left[t-\varepsilon, t+\varepsilon\right]$. Note that for every $t\in[0,1]$ the sequence $\left(g^{(t)}_{\frac{1}{n}}\right)_{n\in\mathbb{N}}$ satisfies $g^{(t)}_{\frac{1}{n}}\downarrow_n 0$. As $V$ is a net catching set for $0$, for every $t\in[0,1]$ there is $\varepsilon(t)\in \mathbb{R}_{>0}$ such that $\left[-g^{(t)}_{\varepsilon(t)},g^{(t)}_{\varepsilon(t)}\right]\subseteq V$. Since $[0,1]$ is compact, there is a finite set $I\subset[0,1]$ such that $\{[t-\varepsilon(t),t+\varepsilon(t)];\, t\in I\}$ is an open cover of $[0,1]$. Let $\delta$ be a Lebesgue number of this cover. There is $n_0\in \mathbb{N}$ such that for every $n \in \mathbb{N}_{\geq n_0}$  the support of $f_n$ has diameter less than $\delta$. Therefore for every $n \in \mathbb{N}_{\geq n_0}$ there is $t \in I$ such that $f_n\in  \left[-g^{(t)}_{\varepsilon(t)},g^{(t)}_{\varepsilon(t)}\right]\subseteq V$. This proves that $f_n \xrightarrow{\tau_o}0$.
\end{example}

As a continuation of Remark \ref{rem:RestrictionandExtensionproperty}, in the subsequent example we present a vector lattice $Y$ with order topology $\tau_o(Y)$ and an order dense subspace $X$ such that the induced topology differs from the order topology $\tau_o(X)$. In the spirit of \cite{IaB} this means, in particular, that the Extension property (E) is not satisfied for order closed sets. 

\begin{example}\label{exa:extensionprop} In \cite[Example 5.2]{IaB} the vector lattice
	\[Y=\left\{y=(y_i)_{i\in \mathbb{Z}} \in l^\infty;\, \lim_{i \rightarrow \infty} y_i \text{ exists}\right\}\]
and its order dense subspace
	\[X=\left\{x=(x_i)_{i \in \mathbb{Z}}\in Y;\, \sum_{k=1}^\infty \frac{x_{-k}}{2^k}=\lim_{i \rightarrow \infty} x_i\right\}\]
	are considered. Moreover, it is shown that the sequence of unit vectors $(e^{(n)})_{n \in \mathbb{N}}$ is $o_1$-convergent to $0$ in $Y$, but is not $o_1$-convergent in $X$. Here for $n,k \in \mathbb{Z}$ we set $e^{(n)}_k:=1$ for $n=k$ and $e^{(n)}_k:=0$ otherwise. 
	
		Let $M:=\{e^{(n)};\, n \in \mathbb{N}\}$. By Theorem \ref{thm:orderclosed}, $M$ is not order closed in $Y$. 
	We will show in (A) that $M$ is order closed in $X$ and in (B) that there is no order closed $N\subseteq Y$ such that $N\cap X=M$. Moreover, in (C) we prove that the sequence $(e^{(n)})_{n \in \mathbb{N}}$ is not convergent with respect to $\tau_o(X)$, and hence not $o_3$-convergent and not $o_2$-convergent.
	
	(A) To show that $M$ is order closed in $X$, we use Theorem \ref{thm:orderclosed}. Let $(n_\alpha)_{\alpha \in A}$ be a net in $\mathbb{N}$ such that $e^{(n_\alpha)}\xrightarrow{o_1}x\in X$. Hence there is a net $(\check{e}^{\alpha})_{\alpha \in A}$ in $X$ such that $\check{e}^\alpha \downarrow 0$ and $\pm\left(e^{(n_\alpha)}-x\right)\leq \check{e}^\alpha$ for all $\alpha \in A$.	We show in the steps (A1) and (A2) that   $(n_\alpha)_{\alpha \in A}$ has exactly one accumulation point $l$, which implies $x=e^{(l)}\in M$. 
	
	(A1) The net  $(n_\alpha)_{\alpha \in A}$ has an accumulation point. 
		
		Indeed, assume the contrary. Let $k \in \mathbb{Z}$. Since no element of $\{0,\ldots,k\}$ is an accumulation point of $(n_\alpha)_{\alpha \in A}$, there is $\alpha_k \in A$ such that for every $\alpha \in A_{\geq \alpha_k}$ we have $n_\alpha > k$. Hence $e^{(n_\alpha)}_k=0$ for every $\alpha \in A_{\geq \alpha_k}$, and $|x_k|=\left|e^{(n_\alpha)}_k-x_k\right|\leq  \check{e}_k^\alpha \downarrow 0$ implies $x_k=0$. This shows $x=0$. 
	
	We show that $\lim_{k \rightarrow \infty}\check{e}^\alpha_k \geq 1$ for every $\alpha \in A$. Assuming $\lim_{k \rightarrow \infty}\check{e}^\alpha_k < 1$, for every $\alpha \in A$ there is $K \in \mathbb{N}$ such that for every $k \in \mathbb{N}_{\geq K}$ we have $\check{e}_k^\alpha<1$. Since $(n_\beta)_{\beta \in A}$ has no accumulation points,  there is $\beta\in A_{\geq \alpha}$ such that $n_\beta \geq K$, and we obtain the contradiction $1>\check{e}^\alpha_{n_\beta}\geq \check{e}^{\beta}_{n_\beta}\geq e^{(n_\beta)}_{n_\beta}=1$.
	
	We do not have $\check{e}_k^\alpha \downarrow_\alpha 0$ for every $k \in \mathbb{Z}\setminus \mathbb{N}$, since otherwise monotone convergence would imply $1\leq \lim_{k \rightarrow \infty}\check{e}^\alpha_k= \sum_{k=1}^\infty \frac{\check{e}_{-k}^\alpha}{2^k} \downarrow_\alpha 0$. Hence there is $k \in \mathbb{Z}\setminus \mathbb{N}$ and $\delta>0$ with $\check{e}_k^\alpha\geq \delta$ for every $\alpha \in A$. Put $w:=\delta e^{(k)}-2 \delta e^{(k-1)}
	$ and observe the contradiction $w\leq \check{e}^\alpha \downarrow_\alpha 0$. This shows that $(n_\alpha)_{\alpha \in A}$ has accumulation points. 
	
	(A2) The net  $(n_\alpha)_{\alpha \in A}$ has at most one  accumulation point.
	
	Indeed, let $l,k\in \mathbb{N}$ be accumulation points of this net. As $\check{e}_l^{\alpha}\downarrow 0$, we obtain that for every $\epsilon>0$ there is an $\alpha_0 \in A$ such that for every $\alpha \in A_{\geq \alpha_0}$ we have $\left|e^{(n_\alpha)}_l-x_l\right|\leq \check{e}_l^\alpha \leq \check{e}_l^{\alpha_0} \leq \epsilon$. Since $l$ is an accumulation point of $(n_\alpha)_{\alpha \in A}$, there is $\alpha \in A_{\geq \alpha_0}$ such that $l=n_\alpha$. Thus $|1-x_l|=\left|e^{(l)}_l-x_l\right|=\left|e^{(n_\alpha)}_l-x_l\right|\leq \epsilon$, consequently $x_l=1$. Since $k$ is an accumulation point of $(n_\alpha)_{\alpha \in A}$, there is $\beta \in A_{\geq \alpha_0}$ such that $k=n_\beta$. Hence $\left|e^{(k)}_l-1\right|=\left|e^{(n_\beta)}_l-x_l\right|\leq \epsilon$ and we have shown $e^{(k)}_l=1$, i.e.\ $k=l$.
	
	(B) To show that there is no order closed set  $N\subseteq Y$ such that $N\cap X=M$, assume the contrary. As $e^{(n)}\xrightarrow{o_1}0$ we obtain $0\in N$. Hence $0\in N\cap X=M$, which is a contradiction.  
	
	(C) Assume that $e^{(n)}\xrightarrow{\tau_o(X)}x \in X$. Since $M$ is order closed in $X$, there is $l \in \mathbb{N}$ such that $x =e^{(l)}$. Let $O:=\{x \in X;\, x_l\in (0,2)\}$ and observe that $e^{(l)}\in O\in \tau_o(X)$. Thus $e^{(n)}\xrightarrow{\tau_o(X)}e^{(l)}$ implies the existence of $N\in \mathbb{N}$ such that for every $n \in \mathbb{N}_{\geq N}$ we have $e^{(n)}\in O$, a contradiction. 
\end{example}

\section{Properties of the set of order continuous linear operators in partially ordered vector spaces}

In this section, let $X$ and $Y$ be  partially ordered vector spaces. In this setting, we provide similar statements as in Section 7. 
The following is a slight generalisation of \cite[Theorem 2.1]{Abra}. Note that for $i=1$ the result is contained in Proposition \ref{pro:o1ob}.

\begin{proposition}
	\label{pro:o_icontinuousisorderbdd}
	Let $X$ be Archimedean, $G$ be a partially ordered abelian group and $i \in \{1,2,3\}$. Every $o_i$-continuous and additive map $f\colon X \rightarrow G$ is order bounded. 
\end{proposition}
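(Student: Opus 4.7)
My plan handles the cases $i \in \{2,3\}$ uniformly; the case $i=1$ is already contained in Proposition \ref{pro:o1ob}. Fixing $w \in X_+$, it suffices by additivity to show that $f[[0,w]]$ is order bounded in $G$, since any order interval $[u,v]$ equals $u + [0,v-u]$. The strategy is to construct a single net in $X$ that $o_1$-converges to $0$ and whose $f$-image, together with $o_i$-continuity and additivity of $f$, yields a uniform bound on $f[[0,w]]$.

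Concretely, I take as directed set $A := [0,w] \times \mathbb{N}$ with the partial order declared by $(y,n) \leq (y',n')$ if and only if $n < n'$, or $(y,n) = (y',n')$. Setting $x_{(y,n)} := \tfrac{1}{n} y$, together with $\hat{x}_{(y,n)} := 0$ and $\check{x}_{(y,n)} := \tfrac{1}{n} w$, the sandwich $\hat{x}_\alpha \leq x_\alpha \leq \check{x}_\alpha$ is immediate from $0 \leq y \leq w$. The net $\check{x}_\alpha$ depends only on $n$, is monotone decreasing in the order of $A$, and has infimum $\inf_{n \in \mathbb{N}} \tfrac{1}{n} w = 0$ precisely because $X$ is Archimedean. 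Hence $\check{x}_\alpha \downarrow 0$, so $x_\alpha \xrightarrow{o_1} 0$ and, by Proposition \ref{pro:basic_convergences}, $x_\alpha \xrightarrow{o_i} 0$.

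By $o_i$-continuity of $f$, $f(x_\alpha) \xrightarrow{o_i} 0$ in $G$. In either case $i \in \{2,3\}$, Proposition \ref{pro:char_o_i_poag} yields an index $\alpha_0 = (y_0,n_0) \in A$ and an element $\check{g} \in G$ such that $\pm f(x_\alpha) \leq \check{g}$ for every $\alpha \geq \alpha_0$ in $A$: for $i=2$ one takes $\check{g} := \check{g}_{\alpha_0}$ from the decreasing dominating net, while for $i=3$ one takes $\check{g} := \check{g}_{\beta_0}$ for any fixed $\beta_0$ and sets $\alpha_0 := \eta(\beta_0)$. Setting $n_1 := n_0 + 1$, the key feature of the partial order on $A$ is that $(y, n_1) > \alpha_0$ for every $y \in [0,w]$, so $\pm f\bigl(\tfrac{1}{n_1} y\bigr) \leq \check{g}$ uniformly in $y$. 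Additivity of $f$ gives $n_1 f\bigl(\tfrac{1}{n_1} y\bigr) = f(y)$, whence $f[[0,w]] \subseteq [-n_1 \check{g}, n_1 \check{g}]$, as required.

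The main obstacle is that for $i \in \{2,3\}$ the $o_i$-dominating net only controls $f(x_\alpha)$ past a threshold that a priori could depend on $y$; the two-parameter index $A = [0,w] \times \mathbb{N}$ with a partial order that separates indices only through the $n$-component is designed precisely to eliminate this dependence, so that a single increment of $n$ past $n_0$ dominates all $y \in [0,w]$ simultaneously and produces the uniform bound $\check{g}$. The Archimedean hypothesis enters at exactly one point, namely in verifying that $\check{x}_{(y,n)} \downarrow 0$; without it the constructed net need not order-converge to $0$.
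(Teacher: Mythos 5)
Your proof is correct and follows essentially the same route as the paper's: both build a net indexed by a product of $[0,w]$ with $\mathbb{N}$, ordered so that advancing the $\mathbb{N}$-coordinate past a threshold dominates all of $[0,w]$ at once, squeeze it between $0$ (resp.\ $-\tfrac{1}{n}w$) and $\tfrac{1}{n}w$ to get $o_1$-convergence via the Archimedean property, and then use Proposition \ref{pro:char_o_i_poag} plus additivity to scale the resulting uniform bound up to $f[[0,w]]$. The only cosmetic difference is your choice of partial order on the index set (comparison through the $\mathbb{N}$-component only, versus the paper's lexicographic order on $\mathbb{N}\times[0,v]$), which plays the identical role.
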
 

\begin{proof}
	Note that it is sufficient to show that $f[[0,v]]$ is order bounded in $G$ for every $v \in X_+$. Let $A:=\mathbb{N}\times [0,v]$ be ordered lexicographically and define $x_{(n,w)}:=\frac{1}{n}w$, $\hat{x}_{(n,w)}:=-\frac{1}{n}v$ and $\check{x}_{(n,w)}:=\frac{1}{n}v$ for $(n,w)\in A$. Note that $\hat{x}_\alpha \uparrow 0$ and $\check{x}_\alpha \downarrow 0$ and that $\hat{x}_\alpha \leq x_\alpha \leq \check{x}_\alpha$ for all $\alpha \in A$. Thus $x_\alpha \xrightarrow{o_1}0$. Since $f$ is $o_i$-continuous, by Proposition \ref{pro:basic_convergences} we obtain $f(x_\alpha)\xrightarrow{o_3}0$. Therefore by Proposition \ref{pro:char_o_i_poag}(iii) there is a net $(y_\beta)_{\beta \in B}$ and a map $\eta \colon B \rightarrow A$ such that $y_\beta \downarrow 0$ and $\pm f(x_\alpha)\leq y_\beta$ for every $\beta \in B$ and $\alpha \in A_{\geq \eta(\beta)}$. Fix $\beta\in B$. Since $\eta(\beta)\in A$ there are $(m,u)\in A$ such that $\eta(\beta)=(m,u)$. Now let $w \in [0,v]$ and observe that $(m+1,w)\geq (m,u)=\eta(\beta)$. Thus 
	$\pm f(w)=\pm(m+1)f\left(\frac{1}{m+1}w\right)=\pm(m+1)f\left(x_{(m+1,w)}\right)\leq (m+1) y_\beta$. Hence $f[[0,v]]\subseteq [-(m+1)y_\beta,(m+1)y_\beta]$. 
\end{proof}

\begin{remark}
	It is an open question whether Proposition \ref{pro:o_icontinuousisorderbdd} is valid if $X$ is an Archimedean partially ordered abelian group.
\end{remark}

We denote
$\operatorname{L}^{o_i}_{\operatorname{b}}(X,Y)=
\operatorname{A}^{o_i}_{\operatorname{b}}(X,Y)\cap \operatorname{L}(X,Y)$, $\operatorname{L}^{\tau_o}_{\operatorname{b}}(X,Y)=
\operatorname{A}^{\tau_o}_{\operatorname{b}}(X,Y)\cap \operatorname{L}(X,Y)$
and
$\operatorname{L}_+^{\operatorname{oc}}(X,Y)=
\operatorname{A}_+^{\operatorname{oc}}(X,Y)\cap \operatorname{L}(X,Y)$.
The proof of the following statement is similar to the one in \cite[Lemma 1.26]{CAD}.
\begin{proposition}
	\label{pro:additive_mon_implies_homogen}
	If $X$ is directed and $Y$ is Archimedean,
	then every additive monotone map is homogeneous, i.e.\ $\operatorname{A}_+(X,Y)=\operatorname{L}_+(X,Y)$.
\end{proposition}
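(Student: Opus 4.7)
The plan is to lift the $\mathbb{Z}$-linearity implicit in additivity to full $\mathbb{R}$-linearity by combining density of $\mathbb{Q}$ in $\mathbb{R}$, monotony of $f$, and the Archimedean property of $Y$, and then to use directedness of $X$ to pass from the positive cone to the whole space. Concretely, I would show $f(\lambda x) = \lambda f(x)$ first for $\lambda \in \mathbb{Q}$, then for $\lambda \in \mathbb{R}_{+}$ with $x \in X_{+}$, and finally for arbitrary $\lambda \in \mathbb{R}$ and $x \in X$.

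Since $f$ is additive, $f(nx) = nf(x)$ for all $n \in \mathbb{Z}$ and $x \in X$; for $q = m/n \in \mathbb{Q}$ with $n \in \mathbb{N}$ the identity $nf(qx) = f(mx) = mf(x)$ in the real vector space $Y$ gives $f(qx) = qf(x)$ after dividing by $n$. This first step is routine and uses no order structure.

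The heart of the argument is the passage from $\mathbb{Q}$ to $\mathbb{R}$. Fix $\lambda \in \mathbb{R}_{+}$ and $x \in X_{+}$, and choose rationals $q_n \leq \lambda \leq r_n$ with $r_n - q_n \leq \tfrac{1}{n}$. Monotony of $f$ applied to $q_n x \leq \lambda x \leq r_n x$, together with the $\mathbb{Q}$-homogeneity just established and the fact that $f(x) \in Y_{+}$ (so that $q \mapsto q f(x)$ is order-monotone in $q$), sandwiches both $f(\lambda x)$ and $\lambda f(x)$ in the order interval $[q_n f(x),\, r_n f(x)]$. Comparing the two bracketings yields the one-sided inequalities $n\bigl(f(\lambda x) - \lambda f(x)\bigr) \leq f(x)$ and $n\bigl(\lambda f(x) - f(\lambda x)\bigr) \leq f(x)$ for every $n \in \mathbb{N}$, and the Archimedean property of $Y$ forces both differences to be $\leq 0$, giving $f(\lambda x) = \lambda f(x)$.

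Finally, directedness of $X$ (equivalently $X = X_{+} - X_{+}$) reduces the general case to the previous one. For $x = u - v$ with $u, v \in X_{+}$ and $\lambda \geq 0$, additivity yields $f(\lambda x) = f(\lambda u) - f(\lambda v) = \lambda f(u) - \lambda f(v) = \lambda f(x)$; for $\lambda < 0$ apply this to $-\lambda > 0$ and use $f(-y) = -f(y)$. The only point that needs care — and the reason the Archimedean hypothesis on $Y$ enters in exactly this form — is that without a lattice structure no absolute value is available on $Y$, so the Archimedean step must be executed on the two one-sided inequalities separately rather than in the more familiar compact form $\lvert f(\lambda x) - \lambda f(x)\rvert \leq \tfrac{1}{n} f(x)$. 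This is a notational nuisance rather than a genuine obstacle.
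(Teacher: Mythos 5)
Your proof is correct and is essentially the standard argument that the paper itself invokes by reference (it gives no proof, citing only \cite[Lemma 1.26]{CAD}): $\mathbb{Q}$-homogeneity from additivity, a rational sandwich $q_n x \leq \lambda x \leq r_n x$ combined with monotony and $f(x)\in Y_+$, the Archimedean property of $Y$ applied to the two one-sided inequalities, and finally directedness of $X$ to pass from $X_+$ to all of $X$. All the needed small facts ($f(0)=0$, hence $f[X_+]\subseteq Y_+$, and the monotonicity of $q\mapsto qf(x)$) are in place, so there is nothing to add.
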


An analogue for $o_i$-continuous maps is given next.

\begin{proposition}
	\label{pro:additive_o_i_cont_implies_homogen}
	Let $X, Y$ be directed and Archimedean and let $i\in\{1,2,3\}$. Then every additive $o_i$-continuous map from $X$ to $Y$ is homogeneous, hence
	$\operatorname{A}^{o_i}_{\operatorname{b}}(X,Y)=\operatorname{L}^{o_i}_{\operatorname{b}}(X,Y)$. Furthermore, $\operatorname{A}_+^{\operatorname{oc}}(X,Y)=\operatorname{L}_+^{\operatorname{oc}}(X,Y)$.	 
\end{proposition}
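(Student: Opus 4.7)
The plan is to prove that every additive $o_i$-continuous map $f\colon X\to Y$ is $\mathbb{R}$-homogeneous; the two displayed equalities of function sets then follow routinely. First, additivity of $f$ between vector spaces forces $\mathbb{Q}$-homogeneity, i.e.\ $f(qx) = q f(x)$ for all $q\in\mathbb{Q}$ and $x\in X$. This is a standard group-theoretic argument (first for $n\in\mathbb{N}$ via iterated additivity, then for integers by $f(-x)=-f(x)$, and finally for fractions $\tfrac{m}{n}$ by applying the integer case to $nx$).

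Next, for $\lambda\in\mathbb{R}$ and $x\in X$, I would choose a sequence $(p_n)_{n\in\mathbb{N}}$ in $\mathbb{Q}$ with $|p_n-\lambda|\leq \tfrac{1}{n}$. Setting $\hat{p}_n:=\lambda-\tfrac{1}{n}$ and $\check{p}_n:=\lambda+\tfrac{1}{n}$ gives $\hat{p}_n\uparrow\lambda$ and $\check{p}_n\downarrow\lambda$ in $\mathbb{R}$ with $\hat{p}_n\leq p_n\leq \check{p}_n$, so $p_n\xrightarrow{o_1}\lambda$ in $\mathbb{R}$, and in particular $p_n\xrightarrow{o_i}\lambda$ by Proposition \ref{pro:basic_convergences}. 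Since $X$ is Archimedean and directed, applying Proposition \ref{pro:characterisation_o_icontinuousscalarmultiplication} to the scalar net $(p_n)$ and the constant net $x_n:=x$ yields $p_n x\xrightarrow{o_i}\lambda x$ in $X$. The $o_i$-continuity of $f$ therefore gives
\[
p_n f(x) \;=\; f(p_n x) \;\xrightarrow{o_i}\; f(\lambda x),
\]
using $\mathbb{Q}$-homogeneity. A parallel application of Proposition \ref{pro:characterisation_o_icontinuousscalarmultiplication} inside $Y$ (which is also Archimedean and directed) to the scalar net $(p_n)$ and the constant net $f(x)$ yields $p_n f(x)\xrightarrow{o_i}\lambda f(x)$. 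Uniqueness of $o_i$-limits (Remark \ref{rem:unique_order_limits}) then forces $f(\lambda x)=\lambda f(x)$, proving homogeneity and hence $\operatorname{A}^{o_i}_{\operatorname{b}}(X,Y)=\operatorname{L}^{o_i}_{\operatorname{b}}(X,Y)$.

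For the final equality $\operatorname{A}_+^{\operatorname{oc}}(X,Y)=\operatorname{L}_+^{\operatorname{oc}}(X,Y)$, I would simply invoke Proposition \ref{pro:additive_mon_implies_homogen}: elements of $\operatorname{A}_+^{\operatorname{oc}}(X,Y)$ are additive and monotone from a directed space to an Archimedean space, hence already $\mathbb{R}$-linear without any use of continuity. There is no serious obstacle in the whole argument; the only point that deserves care is keeping track of \emph{which} space the scalar-multiplication continuity is applied to at each step, since the hypotheses of Proposition \ref{pro:characterisation_o_icontinuousscalarmultiplication} (Archimedean and directed) must be available on both $X$ and $Y$, which is exactly what the statement assumes.
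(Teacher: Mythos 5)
Your proposal is correct and follows essentially the same route as the paper: reduce to $\mathbb{Q}$-homogeneity, pick a rational sequence $o_i$-converging to $\lambda$, apply Proposition \ref{pro:characterisation_o_icontinuousscalarmultiplication} in both $X$ and $Y$, and conclude by uniqueness of $o_i$-limits (Remark \ref{rem:unique_order_limits}). The only cosmetic difference is that you construct the sandwiching scalar nets explicitly where the paper cites Example \ref{exa:opensubsetsofR}, and your use of Proposition \ref{pro:additive_mon_implies_homogen} for the final equality is a valid (and slightly more direct) alternative to deducing it from the first part.
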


\begin{proof} Let $T\in \operatorname{A}^{o_i}_{\operatorname{b}}(X,Y)$. 
	Observe that every additive maps is $\mathbb{Q}$-homo\-geneous.
	Let $\lambda\in\mathbb{R}$ and $x\in X$.
	There is a sequence $(\lambda_n)_{n\in\mathbb{N}}$ in $\mathbb{Q}$ that $o_i$-convergences to $\lambda$ (with respect to $\mathbb{R}$, cf.\ Example  \ref{exa:opensubsetsofR}). By Proposition \ref{pro:characterisation_o_icontinuousscalarmultiplication} we get $\lambda_n x\xrightarrow{o_i} \lambda x$ and $\lambda_n T(x)\xrightarrow{o_i} \lambda T(x)$.
	Since $T$ is $o_i$-continuous, we obtain $T(\lambda_n x)\xrightarrow{o_i} T(\lambda x)$.
		As $T$ is $\mathbb{Q}$-homogeneous, we get for every $n\in\mathbb{N}$ that $T(\lambda_n x)=\lambda_n T(x)$. 
	Due to Remark 
	\ref{rem:unique_order_limits}
	order limits are unique, hence we conclude $T(\lambda x)=\lambda T(x)$. 
\end{proof}

  Under the conditions of Proposition \ref{pro:additive_mon_implies_homogen}, 
  we obtain 
   \[\operatorname{A}_+(X,Y)-\operatorname{A}_+(X,Y)=
   \operatorname{L}_+(X,Y)-\operatorname{L}_+(X,Y)\subseteq
   \operatorname{L}_{\operatorname{b}}(X,Y)\subseteq \operatorname{A}_{\operatorname{b}}(X,Y).\] Hence, if $\operatorname{A}_{\operatorname{b}}(X,Y)$ is directed, then $\operatorname{A}_{\operatorname{b}}(X,Y)=\operatorname{L}_{\operatorname{b}}(X,Y)$. Therefore, Theorem \ref{the:RK_final} yields the following statement.  
  \begin{theorem}
	Let $X$  be a directed   partially ordered vector space with  the Riesz decomposition property, and let $Y$ be a Dedekind complete vector lattice. Then every additive 
	order bounded map is homogeneous, i.e.\ $\operatorname{A}_{\operatorname{b}}(X,Y)=\operatorname{L}_{\operatorname{b}}(X,Y)$. 
\end{theorem}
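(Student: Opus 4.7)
The plan is to combine the two key tools already in place: Proposition \ref{pro:additive_mon_implies_homogen}, which converts additive monotonicity into $\mathbb{R}$-homogeneity, and Theorem \ref{the:RK_final}, which guarantees that $\operatorname{A}_{\operatorname{b}}(X,Y)$ is a lattice and therefore directed. Since every linear map is additive, the inclusion $\operatorname{L}_{\operatorname{b}}(X,Y)\subseteq \operatorname{A}_{\operatorname{b}}(X,Y)$ is immediate, so only the reverse inclusion requires work.

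First I would verify that the hypotheses of Proposition \ref{pro:additive_mon_implies_homogen} apply here: $X$ is directed by assumption, and $Y$, being a Dedekind complete vector lattice, is in particular Archimedean. Hence $\operatorname{A}_+(X,Y)=\operatorname{L}_+(X,Y)$, which means every additive monotone map from $X$ to $Y$ is automatically a positive linear operator.

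Next I would invoke Theorem \ref{the:RK_final}: the assumptions (directedness and the Riesz decomposition property on $X$, Dedekind completeness of the lattice-ordered abelian group $Y$) are precisely what is needed, so $\operatorname{A}_{\operatorname{b}}(X,Y)$ is a Dedekind complete lattice-ordered abelian group. In particular it is directed, and every $f\in\operatorname{A}_{\operatorname{b}}(X,Y)$ decomposes as $f=f^+-f^-$ with $f^+,f^-\in \operatorname{A}_{\operatorname{b}}(X,Y)\cap \operatorname{A}_+(X,Y)$.

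Finally, I would combine the two steps: the positive parts $f^+,f^-$ lie in $\operatorname{A}_+(X,Y)=\operatorname{L}_+(X,Y)\subseteq\operatorname{L}_{\operatorname{b}}(X,Y)$, so $f=f^+-f^-\in\operatorname{L}_{\operatorname{b}}(X,Y)$, which yields $\operatorname{A}_{\operatorname{b}}(X,Y)\subseteq \operatorname{L}_{\operatorname{b}}(X,Y)$ and hence equality. There is no real obstacle here beyond checking that the hypotheses of the two invoked results are satisfied; the substantive work has been carried out in Proposition \ref{pro:additive_mon_implies_homogen} and Theorem \ref{the:RK_final}, and the present statement is essentially their synthesis, as already indicated in the paragraph preceding the theorem.
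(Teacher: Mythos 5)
Your proposal is correct and follows essentially the same route as the paper: the paper likewise combines Proposition \ref{pro:additive_mon_implies_homogen} (giving $\operatorname{A}_+(X,Y)=\operatorname{L}_+(X,Y)$, so that $\operatorname{A}_+(X,Y)-\operatorname{A}_+(X,Y)\subseteq \operatorname{L}_{\operatorname{b}}(X,Y)$) with the directedness of $\operatorname{A}_{\operatorname{b}}(X,Y)$ supplied by Theorem \ref{the:RK_final}. Your explicit use of the decomposition $f=f^+-f^-$ is just a concrete instance of that directedness, so there is no substantive difference.
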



We reformulate the Theorems \ref{the:ogasawara_spaces_are_equal} and \ref{the:ogasawara_part2} and obtain a generalisation of the Ogasawara theorem.  

\begin{theorem}\label{the:finalOga}
	Let $X$  be a directed   partially ordered vector space with the Riesz decomposition property, and let $Y$ be a Dedekind complete vector lattice.
Then 
\begin{align*}
\operatorname{L}_{\operatorname{b}}^{o_1}(X,Y)&=\operatorname{L}_{\operatorname{b}}^{o_2}(X,Y)=\operatorname{L}_{\operatorname{b}}^{o_3}(X,Y)=\operatorname{L}_{\operatorname{b}}^{\tau_o}(X,Y)\\&=\operatorname{L}_{+}^{\operatorname{oc}}(X,Y)-\operatorname{L}_{+}^{\operatorname{oc}}(X,Y).\end{align*} 
	 Moreover, 	
	$\operatorname{L}_{\operatorname{b}}^{\tau_o}(X,Y)$ is an order closed ideal in $\operatorname{L}_{\operatorname{b}}(X,Y)$.
\end{theorem}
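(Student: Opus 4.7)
The plan is a direct transcription from the additive abelian-group results of Section 7 to the linear vector-space setting, using the linearity theorem stated immediately before the statement. That theorem asserts, under exactly our hypotheses on $X$ and $Y$, that $\operatorname{A}_{\operatorname{b}}(X,Y)=\operatorname{L}_{\operatorname{b}}(X,Y)$, i.e.\ every additive order bounded map $X\to Y$ is automatically linear. This eliminates any genuine distinction between the additive and the linear hierarchies.

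First I would record the immediate consequence: the inclusions $\operatorname{A}_{\operatorname{b}}^{o_i}(X,Y)\subseteq\operatorname{A}_{\operatorname{b}}(X,Y)=\operatorname{L}_{\operatorname{b}}(X,Y)\subseteq\operatorname{L}(X,Y)$ yield $\operatorname{L}_{\operatorname{b}}^{o_i}(X,Y)=\operatorname{A}_{\operatorname{b}}^{o_i}(X,Y)$ for each $i\in\{1,2,3\}$, and in the same way $\operatorname{L}_{\operatorname{b}}^{\tau_o}(X,Y)=\operatorname{A}_{\operatorname{b}}^{\tau_o}(X,Y)$ and $\operatorname{L}_{+}^{\operatorname{oc}}(X,Y)=\operatorname{A}_{+}^{\operatorname{oc}}(X,Y)$. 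The chain of equalities in the statement then follows from Theorem \ref{the:ogasawara_spaces_are_equal} applied to $(X,Y)$ viewed as a directed partially ordered abelian group with the Riesz decomposition property and a Dedekind complete lattice-ordered abelian group, respectively.

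For the final clause, Theorem \ref{the:ogasawara_part2} gives that $\operatorname{L}_{\operatorname{b}}^{\tau_o}(X,Y)=\operatorname{A}_{\operatorname{b}}^{\tau_o}(X,Y)$ is an order closed directed full subgroup of $\operatorname{L}_{\operatorname{b}}(X,Y)$. The only point still to check, in order to upgrade this from a subgroup to a vector subspace (and hence to an ideal of the partially ordered vector space $\operatorname{L}_{\operatorname{b}}(X,Y)$), is closure under scalar multiplication. Since $Y$ is a Dedekind complete vector lattice it is Archimedean and directed, so Proposition \ref{pro:characterisation_o_icontinuousscalarmultiplication} applies in $Y$: for $T\in\operatorname{L}_{\operatorname{b}}^{\tau_o}(X,Y)=\operatorname{L}_{\operatorname{b}}^{o_1}(X,Y)$, $\lambda\in\mathbb{R}$ and a net $x_\alpha\xrightarrow{o_1}x$ in $X$, we have $T(x_\alpha)\xrightarrow{o_1}T(x)$ in $Y$, and coupling this with the constant scalar net $\lambda$ yields $\lambda T(x_\alpha)\xrightarrow{o_1}\lambda T(x)$; thus $\lambda T\in \operatorname{L}_{\operatorname{b}}^{o_1}(X,Y)=\operatorname{L}_{\operatorname{b}}^{\tau_o}(X,Y)$.

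The proof is essentially bookkeeping: the substantive ingredients (the Riesz--Kantorovich machinery, the group-level Ogasawara theorems \ref{the:ogasawara_spaces_are_equal} and \ref{the:ogasawara_part2}, and the preceding automatic linearity result) have already been established, so I anticipate no genuine obstacle beyond verifying the scalar-multiplication bookkeeping just described.
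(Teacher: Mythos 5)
Your proposal is correct and matches the paper's intended argument: the paper derives this theorem by exactly the same bookkeeping, namely the automatic-linearity result $\operatorname{A}_{\operatorname{b}}(X,Y)=\operatorname{L}_{\operatorname{b}}(X,Y)$ followed by Theorems \ref{the:ogasawara_spaces_are_equal} and \ref{the:ogasawara_part2}. Your extra verification of closure under scalar multiplication is a harmless addition; since the paper's notion of ideal is that of a directed full subgroup of the partially ordered abelian group $\operatorname{L}_{\operatorname{b}}(X,Y)$, that step is not even required for the statement as written.
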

If $X$ is, in addition, Archimedean, then by Proposition \ref{pro:o_icontinuousisorderbdd} and Theorem \ref{the:finalOga} a linear operator $T\colon X \to Y$ is $o_i$-continuous if and only if $T\in \operatorname{L}_{+}^{\operatorname{oc}}(X,Y)-\operatorname{L}_{+}^{\operatorname{oc}}(X,Y)$. 

It is an open question whether one obtains similar results to the ones in Theorem \ref{the:finalOga} under weaker assumtions. 
In particular, if $Y$ is an Archimedean vector lattice, but not Dedekind complete, then the set of all regular linear operators is an Archimedean directed partially ordered vector space, and the notion of an ideal is at hand, see \cite{IaB}. One can ask whether the set of order continuous (or $o_i$-continuous)  regular linear operators is an order closed ideal in the space of regular operators.  

\footnotesize

 \vspace{10mm} \noindent
\begin{tabular}{l l l }
Till Hauser &Anke Kalauch  \\
Fakult\"at f\"ur Mathematik und Informatik& FR Mathematik \\
Institut f\"ur Mathematik&Institut f\"ur Analysis\\
Friedrich-Schiller-Universit\"at Jena&TU Dresden \\
Ernst-Abbe-Platz 2&\\
07743 Jena&01062 Dresden\\
Germany&Germany&\\
till.hauser@uni-jena.de & \end{tabular}

\end{document}